\newcommand\id{\mathrm{id}}
\newcommand\ten{\otimes}
\newcommand\eps{\epsilon}
\newcommand\OO{\mathrm{O}}
\renewcommand\H{\mathrm{H}}
\newcommand\z{\mathrm{Z}}
\newcommand\Z{\mathbb{Z}}
\newcommand\Q{\mathbb{Q}}
\newcommand\R{\mathbb{R}}
\newcommand\Cx{\mathbb{C}}
\newcommand\bA{\mathbb{A}}
\newcommand\bD{\mathbb{D}}
\newcommand\bG{\mathbb{G}}
\newcommand\bH{\mathbb{H}}
\newcommand\bM{\mathbb{M}}
\newcommand\C{\mathcal{C}}
\newcommand\cP{\mathcal{P}}
\renewcommand\O{\mathscr{O}}
\newcommand\sD{\mathscr{D}}
\newcommand\sE{\mathscr{E}}
\newcommand\sF{\mathscr{F}}
\newcommand\sG{\mathscr{G}}
\newcommand\sI{\mathscr{I}}
\newcommand\sL{\mathscr{L}}
\newcommand\sO{\mathscr{O}}
\newcommand\sT{\mathscr{T}}
\newcommand\sU{\mathscr{U}}
\newcommand\sV{\mathscr{V}}
\newcommand\sW{\mathscr{W}}
\newcommand\fX{\mathfrak{X}}
\renewcommand\L{\Lambda}
\renewcommand\hom{\mathscr{H}\!\mathit{om}}
\newcommand\cDiff{\mathcal{D}\!\mathit{iff}}
\newcommand\Hom{\mathrm{Hom}}
\newcommand\HHom{\underline{\mathrm{Hom}}}
\newcommand\EEnd{\underline{\mathrm{End}}}
\newcommand\cone{\mathrm{cone}}
\newcommand\cocone{\mathrm{cocone}}
\newcommand{\brh}{\llbracket \hbar \rrbracket}
\newcommand\im{\mathrm{Im\,}}
\newcommand\ob{\mathrm{ob}}
\newcommand\Spec{\mathrm{Spec}\,}
\newcommand\Sp{\mathrm{Sp}}
\newcommand\PreSp{\mathrm{PreSp}}
\newcommand\Pol{\mathrm{Pol}}
\newcommand\Comp{\mathrm{Comp}}
\newcommand\nondeg{\mathrm{nondeg}}
\newcommand\ad{\mathrm{ad}}
\newcommand\<{\langle}
\renewcommand\>{\rangle}
\newcommand\Lim{\varprojlim}
\newcommand\LLim{\varinjlim}
\newcommand\xra{\xrightarrow}
\newcommand\xla{\xleftarrow}
\newcommand\rk{\mathrm{rk}}
\newcommand\bt{\bullet}
\newcommand\by{\times}
\newcommand\mc{\mathrm{MC}}
\newcommand\mmc{\underline{\mathrm{MC}}}
\newcommand\SO{\mathrm{SO}}
\newcommand\GL{\mathrm{GL}}
\newcommand\an{\mathrm{an}}
\newcommand\Tot{\mathrm{Tot}\,}
\newcommand\pd{\partial}
\newcommand\half{\frac{1}{2}}
\newcommand\gr{\mathrm{gr}}
\newcommand\Fil{\mathrm{Fil}}
\newcommand\red{\mathrm{red}}
\newcommand\cosk{\mathrm{cosk}}
\newcommand\dR{\mathrm{dR}}
\newcommand\DR{\mathrm{DR}}
\newcommand\op{\mathrm{opp}}
\newcommand\co{\colon\thinspace}
\newcommand\oR{\mathbf{R}}
\newcommand\oL{\mathbf{L}}
\newcommand\uleft\underleftarrow
\newcommand\uline\underline
\newcommand\uright\underrightarrow
\newtheorem{theorem}{Theorem}[section]
\newtheorem{proposition}[theorem]{Proposition}
\newtheorem{corollary}[theorem]{Corollary}
\newtheorem{lemma}[theorem]{Lemma}
\newtheorem*{theorem*}{Theorem}
\newtheorem*{proposition*}{Proposition}
\newtheorem*{corollary*}{Corollary}
\newtheorem*{lemma*}{Lemma}
\newtheorem*{conjecture*}{Conjecture}
\theoremstyle{definition}
\newtheorem{definition}[theorem]{Definition}
\newtheorem*{definition*}{Definition}
\newtheorem*{notation*}{Notation}
\theoremstyle{remark}
\newtheorem{example}[theorem]{Example}
\newtheorem{remark}[theorem]{Remark}
\newtheorem{remarks}[theorem]{Remarks}
\newtheorem*{example*}{Example}
\newtheorem*{examples*}{Examples}
\newtheorem*{remark*}{Remark}
\newtheorem*{remarks*}{Remarks}
\newtheorem*{exercise*}{Exercise}
\newtheorem*{property*}{Property}
\newtheorem*{properties*}{Properties}
\begin{document}

\begin{abstract}
We formulate a notion of $E_{-1}$ quantisation of $(-2)$-shifted Poisson structures on derived algebraic stacks, depending on a flat right connection on the structure sheaf, as solutions of a quantum master equation.  
We then parametrise    $E_{-1}$ quantisations of $(-2)$-shifted symplectic structures by constructing a map to power series in  de Rham cohomology. For derived schemes, we show that these quantisations give rise to classes  in Borel--Moore homology,  and for a large class of examples we show that the classes  are closely related to Borisov--Joyce invariants.
\end{abstract}

\title{Deformation quantisation for $(-2)$-shifted symplectic structures} 

\author{J.P.Pridham}

%\subjclass[2010]{14F05 (primary), and 14A20, 18G30, 32S30 (secondary)}

\maketitle

\section*{Introduction}

Shifted symplectic structures in derived algebraic geometry were introduced in \cite{PTVV}, and shifted Poisson structures, together with the correspondence between shifted symplectic structures and non-degenerate shifted Poisson structures, in \cite{poisson,CPTVV}. For $n \ge 1$, deformation quantisation of $n$-shifted Poisson structures is an immediate consequence of formality of the $E_{n+1}$ operad, as observed in \cite{CPTVV}. For $n=0$ and $n=-1$, deformation quantisation of $n$-shifted  Poisson structures is more subtle, but  was investigated and largely established in \cite{DQnonneg,DQpoisson,DQvanish}; we now look at the case  $n=-2$.

By deformation quantisation of a derived scheme or stack $X$ over $R$, we should mean some form of non-commutative formal deformation of the structure sheaf over $R\brh$, for $\hbar$ an element of homological degree $0$. In particular, this excludes the red shift quantisations proposed in \cite{CPTVV}. Meanwhile the structures enhancing fundamental classes in \cite{BBDJS,BorisovJoyce} are naturally defined over $R((\hbar))$, and should be recovered by localising deformation quantisations away from $\hbar=0$ (cf. \cite[\S \ref{DQvanish-vanishsn}]{DQvanish} for the $n=-1$ case).

For $n \ge -1$, an $n$-shifted quantisation is a (Beilinson--Drinfeld) $BD_{n+1}$-algebra. For $n \ge 0$, this is a filtered, almost commutative $E_{n+1}$-algebra deforming the $P_{n+1}$-algebra given by the Poisson structure. The category of modules over such an algebra is an $n$-tuply monoidal linear category, so $n=0$ just gives a linear category. The case $n=-1$ concerns $BD_0$-algebras, which are filtered, almost commutative Batalin--Vilkovisky (BV)-algebras. These are just  objects in a category, and a $(-2)$-shifted quantisation will just be an element of an object. Since the hierarchy of $BD_{n+1}$-algebras has petered out by $n=-2$, we make use of the observation that for $n\ge -1$, $n$-shifted quantisations are parametrised by Maurer--Cartan elements of  a natural $BD_{n+2}$-algebra (given by differential operators or Hochschild complexes) deforming the $P_{n+2}$-algebra of shifted polyvectors. 

For $n=-2$, we thus  consider the $BD_0$-algebra given by the Hodge filtration on the right de Rham complex of $\sO_X$ associated to a  flat right connection on   $\sO_X$, and formulate (Definition \ref{Qpoissdef})
 deformation quantisations of $(-2)$-shifted Poisson structures  as Maurer--Cartan elements of an associated  $BV$-algebra, i.e. as solutions of the quantum master equation. On inverting $\hbar$, this gives a Laurent series of cohomology classes in a right de Rham complex.

Our main results are Propositions \ref{quantprop} and  \ref{uniqueconn}, and their global analogues Proposition \ref{DMquantprop} and \S \ref{Artinsn}. Proposition \ref{quantprop} parametrises $E_{-1}$-quantisations in terms of first-order quantisations and power series in de Rham cohomology; in particular, it shows that the only obstruction to quantising a $(-2)$-shifted Poisson structure is first order. The strategy of proof is adapted from \cite{DQvanish}, involving  a notion of compatibility between $(-2)$-shifted Poisson structures and  de Rham power series.  Proposition \ref{uniqueconn} then shows that if there exist any flat right connections on $\sO_X$, then there is an essentially unique flat right connection admitting first-order quantisations of a given non-degenerate    $(-2)$-shifted Poisson structure. These combine to give Corollary \ref{uniqueconncor}, ensuring the existence of quantisations for $(-2)$-shifted symplectic structures whenever $\sO_X$ admits a flat right connection. 

In  \S \ref{cfBJ} we set about associating virtual fundamental classes to quantisations. Right de Rham cohomology of the dualising complex is just Borel--Moore homology, but for  $(-2)$-shifted symplectic derived schemes the dualising complex $\omega_X$ is seldom a line bundle. However, in Proposition \ref{DRrtoBMprop} and Remark \ref{DRrGorensteinRmk}, we establish a quasi-isomorphism  $\DR^r_X(\det \Omega^1_X)[\dim X] \to \DR^r_X(\omega_X)$ with the right de Rham complex of the determinant bundle. Thus  generating virtual fundamental classes in Borel--Moore homology from our quantisations just relies on a quasi-isomorphism $ \det \Omega^1_X \simeq \sO_X$ of right $\sD$-modules, which we can think of as orientation data.  

More generally, the shifted symplectic structure always defines an inner product on the line bundle $\det \Omega^1_X $, and  over $\Cx$ we can consider the local system of analytic functions $o_X \subset (\det \Omega^1_X)^{\an}$ with locally constant norm.
Corollary \ref{VFCCor} and  Remark \ref{QIMconnRmk} then show that  quantisations $S$ lead to virtual fundamental classes 
\[
 [e^S] \in \H^{BM}_{\dim X}(\pi^0X(\Cx)_{\an}, o_X)\brh
\]
in Borel--Moore homology of the underived truncation $\pi^0X$ of $X$ with its analytic topology,  where $\dim X$ is the virtual dimension of $X$.

In order to understand what these virtual fundamental classes look like, we then restrict attention to  dg manifolds $X$  with strict  $(-2)$-shifted symplectic structures, and relate them to Euler classes. Over $\Cx$, if we follow the procedure of \cite{BorisovJoyce} to cut the dg manifold in half, the resulting object $X_{\mathrm{dm}}$ is the derived vanishing locus of a section of a smooth real orthogonal  vector bundle. We then show in Proposition \ref{EulerPropNew} and Remark \ref{BJrmk} that the classes 
\[
[e^S] \in \H^{BM}_{\dim X}(\pi^0X(\Cx)_{\an}, o_X)\brh \cong \H^r(X^0(\Cx), X^0(\Cx)\setminus \pi^0X(\Cx); o_X)\brh
\]
associated to quantisations are given by pulling back Thom classes $e_r \in \H^r(B\OO_r, B\OO_{r-1};\det)$ (necessarily $0$ unless $\dim X$ is even) and multipling by elements of $1+ \hbar^2 \Cx\brh$.

In particular, when $X$ is oriented, this means (Corollary \ref{BJcor})  that the images in Steenrod homology of the classes $[e^S]$ are given by
\[
 \hbar^{(\dim X)/2} [X]_{BJ} \cdot (1+ \hbar^2 \Cx\brh) \subset \H_{\dim X}^{\mathrm{St}}(\pi^0X(\Cx)_{\an},\Cx\brh),
\]
where  $[X]_{BJ}$ is the Borisov--Joyce virtual fundamental class $[X_{\mathrm{dm}}]_{\mathrm{virt}}$ of \cite[Corollary 3.19]{BorisovJoyce}. 

It is important to note, however, that a quantisation itself is a far richer structure than a cohomology class, because of restrictions in terms of the Hodge filtration. In particular,  shifted Poisson structures can be recovered directly from our $E_{-1}$ quantisations, and the space of homotopy classes of quantisations  does not have an abelian structure.

I would like to thank Dominic Joyce for helpful comments on Borisov--Joyce invariants.

\subsection{Notation}

We denote the underlying  graded module of a cochain complex (resp. chain complex) by $M^{\#}$ (resp. $M_{\#}$).

Given a differential graded associative algebra (DGAA) $A$, and $A$-modules $M,N$ in cochain complexes,
we write $\HHom_A(M,N)$ for the cochain complex given by
\[
  \HHom_A(M,N)^i= \Hom(M^{\#},N^{\#[i]}),
\]
 with differential $\delta f= \delta_N \circ f \pm f \circ \delta_M$.

\tableofcontents

\section{Compatible quantisations on derived affine schemes}\label{affinesn}
Let $R$ be a graded-commutative differential algebra (CDGA) over $\Q$, and fix a CDGA $A$ over $R$. We will denote the differentials on $A$ and $R$  by $\delta$. 

\subsection{Quantised $(-2)$-shifted polyvectors}

\subsubsection{Polyvectors}

The following is adapted from \cite[Definition \ref{poisson-poldef}]{poisson}, with the introduction of a dummy variable $\hbar$ of cohomological degree $0$ to assist comparison with quantisation constructions.

\begin{definition}\label{poldef}
Define the complex of $(-2)$-shifted polyvector fields  (or strictly speaking, multiderivations) on $A$ by
\[
 \widehat{\Pol}(A/R,-2):=  \prod_{p \ge 0}\hbar^{p-1}\HHom_A(\Omega^p_{A},A)[p]. 
\]
with graded-commutative  multiplication $(a,b)\mapsto  ab$  on $ \hbar\widehat{\Pol}(A,-2)$   following the usual conventions for symmetric powers, so for $\pi \in  \hbar^p\HHom_A(\Omega^p_{A},A)$, $ \nu \in  \hbar^q\HHom_A(\Omega^q_{A},A)$ we have
\[
 (\pi\cdot \nu)(a df_1 \wedge \ldots \wedge df_{p+q})= \frac{1}{(p+q)!}\sum_{\sigma \in S_{p+q}} \pm a \pi( df_{\sigma(1)} \wedge \ldots \wedge df_{\sigma(p)}) \nu(df_{\sigma(p+1)} \wedge \ldots \wedge df_{\sigma(p+q)}). 
\]

The Lie bracket on $\Hom_A(\Omega^1_{A/R},A)$ then extends to give a bracket (the Schouten--Nijenhuis bracket)
\[
[-,-] \co \widehat{\Pol}(A/R,-2)\by \widehat{\Pol}(A/R,-2)\to \widehat{\Pol}(A/R,-2)[1],
\]
determined by the property that it is a bi-derivation with respect to the multiplication operation. 

Thus  $\widehat{\Pol}(A/R,-2)$ has the natural structure of a $P_0$-algebra, and in particular $\widehat{\Pol}(A/R,-2)[-1]$ is a differential graded Lie algebra (DGLA) over $R$.

Note that the differential $\delta$ on $\widehat{\Pol}(A/R,-2)$ can be written as $[\delta,-]$, where $\delta \in \widehat{\Pol}(A/R,-2)^1$ is the element defined by the derivation $\delta$ on $A$.
\end{definition}

\begin{definition}\label{Fdef}
Define a decreasing filtration $F$ on  $\widehat{\Pol}(A/R,-2)$ by 
\[
 F^i\widehat{\Pol}(A/R,-2):=  \prod_{j \ge i}\hbar^{j-1}\HHom_A(\Omega^j_{A},A)[j];
\]
this has the properties that $\widehat{\Pol}(A/R,-2)= \Lim_i \widehat{\Pol}(A/R,-2)/F^i$, with $[F^i,F^j] \subset F^{i+j-1}$, $\delta F^i \subset F^i$, and $ F^i F^j \subset \hbar^{-1} F^{i+j}$.
\end{definition}

Observe that this filtration makes $F^2\widehat{\Pol}(A/R,-2){[-1]}$ into a pro-nilpotent DGLA.

\begin{definition}\label{Tpoldef0}
 Define the tangent DGLA of polyvectors by 
\[
 T\widehat{\Pol}(A/R,-2):= \widehat{\Pol}(A/R,-2)\oplus  \widehat{\Pol}(A/R,-2) \hbar\eps,
\]
for $\eps$ of degree $0$ with $\eps^2=0$. The Lie bracket is given by $ [u+v\eps, x+y\eps]= [u,x]+ [u,y]\eps + [v,x]\eps$.
\end{definition}

\begin{definition}\label{Tpoldef}
Given a Maurer--Cartan element $\pi \in  \mc(F^2\widehat{\Pol}(A/R,-2){[-1]}) $, define 
\[
 T_{\pi}\widehat{\Pol}(A/R,-2):= \prod_{p \ge 0}\hbar^{p}\HHom_A(\Omega^p_{A},A)[p],
\]
with derivation $\delta + [\pi,-]$ (necessarily square-zero by the Maurer--Cartan conditions). 
The product on polyvectors makes this a  CDGA, and it inherits the filtration $F$ from $\widehat{\Pol}$. 

Given $\pi \in \mc(F^2\widehat{\Pol}(A/R,-2)/F^p)$, we define $T_{\pi}\widehat{\Pol}(A/R,-2)/F^p$ similarly. This is a CDGA  because $F^i\cdot F^j \subset F^{i+j}$.
\end{definition}

Regarding $ T_{\pi}\widehat{\Pol}(A/R,-2){[-1]}$ as an abelian DGLA, observe that $\mc(T_{\pi}\widehat{\Pol}(A/R,-2){[-1]})$ is just the fibre of 
\[
 \mc( T\widehat{\Pol}(A/R,-2){[-1]}) \to  \mc(\widehat{\Pol}(A/R,-2){[-1]})
\]
over $\pi$. Evaluation at $\hbar=1$ gives an  isomorphism from $T\widehat{\Pol}(A/R,-2){[-1]}$ to the DGLA   $\widehat{\Pol}(A/R,-2){[-1]}\ten_{\Q} \Q[\eps]$ of \cite[\S \ref{poisson-polsn}]{poisson}, and the map $\sigma$ of \cite[Definition \ref{poisson-sigmadef}]{poisson} then becomes:

\begin{definition}\label{sigmadef}
Define 
\[
\sigma =- \pd_{\hbar^{-1}} \co \widehat{\Pol}(A/R,-2) \to T\widehat{\Pol}(A/R,-2)
\]
 by $\alpha \mapsto \alpha + \eps \hbar^{2}\frac{\pd \alpha}{\pd \hbar}$. Note that this is a morphism of filtered  DGLAs, so gives a map 
\[
 \mc(F^2\widehat{\Pol}(A/R,-2){[-1]}) \to \mc(F^2T\widehat{\Pol}(A/R,-2){[-1]}),
\]
with $ \sigma(\pi) \in \z^1(F^2T_{\pi}\widehat{\Pol}(A/R,-2){[-1]})$. 
\end{definition}

\subsubsection{Right connections and de Rham complexes}

\begin{definition}\label{htpyrightDmoddef}
 We define a homotopy right $\sD$-module structure (or flat right connection) on $A$ over $R$ to be a sequence of maps $\nabla_{p+1} \co \HHom_A(\Omega^{p}_{A/R},A)^{\#} \to A^{\#}[1-p] $ for $p \ge 1$,  satisfying the following conditions:
\begin{enumerate}
 \item For $a \in A$ and $\xi \in  \HHom_A(\Omega^{1}_{A},A)$, we have $\nabla_2(a\xi)= a\nabla_2(\xi) - \xi(da)$;
\item For $p \ge 2$, the maps $\nabla_{p+1}$ are $A$-linear;
\item The operations $(\nabla_2 -\id, \nabla_3, \nabla_4, \ldots)$ define an $L_{\infty}$-morphism from the DGLA $ \HHom_A(\Omega^{1}_{A},A)$ to the DGLA $(A \oplus  \HHom_A(\Omega^{1}_{A},A))^{\op}$ of first-order differential operators with bracket given by negating the commutator.
\end{enumerate}
\end{definition}

\begin{remarks}
The final condition in Definition \ref{htpyrightDmoddef} is equivalent to saying that $\nabla$ is an $L_{\infty}$-derivation from the DGLA $ \HHom_A(\Omega^{1}_{A},A)$ to the $R$-module   $A$ given the $ \HHom_A(\Omega^{1}_{A},A)$-module structure $\xi * a := -\xi(da)$.  
 If we interchange the order of duals and tensor products (permissible if $\Omega^1_A$ is a perfect $A$-module), then our flat right connections correspond to  right $(A,\HHom_A(\Omega^{1}_{A},A))$-module structures on $A$, in the sense of \cite[Definition 44]{vitagliano}, for the natural Lie-Rinehart algebra structure on $(A,\HHom_A(\Omega^{1}_{A},A))$. %%flat flat right connection

%Beware that Definition \ref{htpyrightDmoddef} does not correspond to \cite{GaitsgoryRozenblyumCrystal}, but note that we do have a map from theirs to ours.
\end{remarks}

\begin{definition}\label{DRrdef}
 Given a flat right connection $\nabla$ on $A$, we define the right de Rham complex $\DR^r(A,\nabla)$ associated to $\nabla$, and its increasing filtration $F$, by 
\[
 F_i \DR^r(A,\nabla):= \bigoplus_{p \le i}\HHom_A(\Omega^p_{A},A)[p], %%NB plus not prod
\]
equipped with differential $D^{\nabla}= \sum_{k \ge 1} D^{\nabla}_k$ 
 given (for $\pi \in\HHom_A(\Omega^p_{A},A)[p]$, $\omega \in \Omega^{p+1-k}_A$)  by
\begin{align*}
 D^{\nabla}_k(\pi)(\omega):= \begin{cases}
                              \nabla_k(\pi \lrcorner \omega) & k > 2,\\
                           \nabla_2(\pi \lrcorner \omega) +(-1)^{\deg \pi} \pi(d\omega) & k=2,\\
\delta\pi( \omega)  &k=1,
\end{cases}
\end{align*}
where $d$ is the de Rham differential and $\delta$ is induced by the differential $\delta$ on $A$.
\end{definition}

As in \cite[Corollary 50]{vitagliano}, the condition that $\nabla$ be an $L_{\infty}$-derivation is equivalent to saying that the operator $ D^{\nabla}= \sum_k D^{\nabla}_k$ satisfies $D^{\nabla} \circ D^{\nabla}=0$.

%%main contribution $\nabla_2([\xi,\eta])+ [\nabla_2\xi,\eta] + [\xi, \nabla_2\eta}$ with some signs. That's also our obstruction (in $\H^2(F^1)$) to existence of flat right connection.

%%equivalently, characterised by $D^{\nabla}(\alpha \wedge \pi)= \alpha \wedge D^{\nabla}\pi + [\alpha, \pi]$.

%%better is that we can deduce that $D^{\nabla}(\pi \lrcorner \alpha)= D^{\nabla}(\pi) \lrcorner \alpha \pm \pi \lrcorner d\alpha $. Explicitly, $ D^{\nabla}_k(\pi \lrcorner \alpha)=  D^{\nabla}_k(\pi) \lrcorner \alpha$ for $k \ge 2$, and descrtiption above applies to $\nabla_2$.. From that, cna deduce that $D^{\nabla} + \lrcorner \alpha$ is flat iff $\alpha \wedge \alpha + d\alpha =0$, which just reduces to saying that $d\alpha=0$.

\begin{definition}\label{BVdef}
We adapt %\cite[Definition 52]{vitagliano} (a slight restriction of \cite[Definition 3.11]{BraunLazarevHtpyBV}, itself based on )
\cite[Definition 7]{kravchenko} 
by defining a filtered $BV_{\infty}$-algebra $B$ over $R$ to be a graded-commutative unital $R$-algebra equipped with an increasing filtration $F$  and a  square-zero $R$-linear operator $\boxempty$ of degree $1$ satisfying the conditions
\begin{enumerate}
 \item $1 \in F_0B$ and  $F_r \cdot F_s \subset F_{r+s}$;
\item $\boxempty(1)=0$ and $\boxempty(F_r) \subset F_r$;
\item  for $a_i \in F_{r_i}$ and $b \in F_s$, the iterated graded commutators satisfy
\[
 [a_1,[a_2, \ldots, [a_k, \boxempty] \ldots ](b) \in F_{s-k+\sum r_i}. %%from \boxempty_{\ge k+1}.
\]
\end{enumerate}
\end{definition}
In particular, the  conditions for the  BV operator $\boxempty$ are satisfied if it admits a locally finite decomposition $\boxempty = \sum_{k\ge 1} \boxempty_k$, for $\boxempty_k$ a differential operator of  order $\le k$, such that $\boxempty_k(1)=0$ and $\sum_{i+j=k}[\boxempty_i,\boxempty_j]=0$, with $\boxempty_k\co F_rB \to F_{r+1-k}B$.  Such decompositions without a filtration correspond to $BV_{\infty}$-algebras in the sense of  \cite[Definition 52]{vitagliano} and \cite[Definition 3.11]{BraunLazarevHtpyBV}.

\begin{definition}\label{LinftyBVdef}
Following \cite[Proposition 2]{kravchenko}, the operations
\[
 [a_1, \ldots, a_k]_{\boxempty,k}:= [\ldots [\boxempty, a_1], \ldots , a_k](1)
\]
define an $L_{\infty}$-algebra structure on the complex $B[-1]$ for  any filtered $BV_{\infty}$-algebra  $B$ with differential $\boxempty$.

It follows from Definition \ref{BVdef} that these $L_{\infty}$ operations  satisfy
\[
 [F_{i_1}, \ldots, F_{i_k}]_{\boxempty,k}\subset F_{i_1+ \ldots +i_k +1-k}.
\]
\end{definition}

\begin{lemma}\label{DRrBVlemma}
 The operator $D^{\nabla}= \sum_{k \ge 1} D^{\nabla}_k $ defines a filtered   $BV_{\infty}$-algebra structure on the filtered  graded-commutative algebra $\DR^r(A,\nabla)$. 
On the associated graded complex $\gr^F\DR^r(A,\nabla) $,  the induced $L_{\infty}$  bracket $[-]_{\nabla,k}$ of weight $1-k$ 
is trivial for $k \ge 3$ and corresponds to the Schouten--Nijenhuis bracket for $k=2$.
\end{lemma}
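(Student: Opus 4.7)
The plan is to verify the sufficient criterion for a filtered $BV_{\infty}$-algebra stated just after Definition \ref{BVdef}. The filtration compatibility $D^{\nabla}_k\co F_r \to F_{r+1-k}$ is immediate from the formula, since $D^{\nabla}_k$ sends $\HHom_A(\Omega^p_A,A)[p]$ into $\HHom_A(\Omega^{p+1-k}_A,A)[p+1-k]$. The unit condition $D^{\nabla}_k(1)=0$ follows because $1\in F_0$ and $F_{1-k}=0$ for $k\ge 2$, while $D^{\nabla}_1=\delta$ is a derivation. Square-zero of the total operator is the content of \cite[Corollary 50]{vitagliano} as already noted, and decomposing $(D^{\nabla})^2=0$ by filtration weight yields $\sum_{i+j=k}[D^{\nabla}_i,D^{\nabla}_j]=0$ for each $k$. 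The main remaining structural input is to show that each $D^{\nabla}_k$ is a differential operator of order $\le k$ with respect to the graded-commutative product on $\DR^r(A,\nabla)$.

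The order analysis reduces to computing on pure wedges of $1$-polyvectors. For $k=1$ this is the derivation property of $\delta$. For $k\ge 2$, the interior product $\lrcorner$ is a derivation in its first argument, so the formula $(D^{\nabla}_k\pi)(\omega)=\nabla_k(\pi\lrcorner\omega)$ (together with the de Rham term $\pi(d\omega)$ when $k=2$) yields a higher Leibniz identity for $D^{\nabla}_k$. For $k=2$ this gives order $\le 2$, with the non-$A$-linearity $\nabla_2(a\xi)=a\nabla_2(\xi)-\xi(da)$ exactly balancing the de Rham term. Crucially, for $k\ge 3$ the $A$-linearity of $\nabla_k$ (Definition \ref{htpyrightDmoddef}(2)) strengthens this identity by one order, so that $D^{\nabla}_k$ is in fact a differential operator of order $\le k-1$.

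For the $L_{\infty}$ brackets on $\gr^F$: if $a_i\in F_{r_i}$ then $[a_1,\ldots,a_k]_{D^{\nabla}_j,k}\in F_{\sum r_i+1-j}$, while the graded bracket must lie in weight $\sum r_i+1-k$, forcing $j=k$. Thus $[-]_{\nabla,k}$ on $\gr^F$ is read off from $D^{\nabla}_k$ alone. For $k\ge 3$, the enhanced order bound from the previous paragraph forces the $k$-fold iterated commutator of $D^{\nabla}_k$ to vanish, so $[-]_{\nabla,k}=0$ on $\gr^F$. For $k=2$, one computes the symbol of $D^{\nabla}_2$ directly on pure wedges of $1$-polyvectors: the $\pi(d\omega)$ term realises the Schouten--Nijenhuis bracket as the standard obstruction to the de Rham operator being a graded derivation of the polyvector product, and the $\nabla_2\lrcorner$ contribution drops out on the graded level modulo terms of strictly lower filtration weight.

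The main obstacle is the enhanced order bound for $k\ge 3$, i.e.\ showing that the $A$-linearity of $\nabla_k$ promotes $D^{\nabla}_k$ from order $\le k$ to order $\le k-1$. This requires careful combinatorial tracking of how iterated contractions distribute over wedge products, which is most cleanly handled by expressing $D^{\nabla}_k$ on a product $\pi_1\wedge\cdots\wedge\pi_m$ of $1$-polyvectors as an explicit sum over contractions and then using $A$-linearity of $\nabla_k$ to recognise the resulting expression as a $(k-1)$-fold Leibniz rule, eliminating the potential top-order symbol term by term.
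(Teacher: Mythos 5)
Your overall strategy is a legitimate from-scratch alternative to the paper's proof, which instead delegates both the order bounds and the identification of the induced brackets to the computation of \cite[Proposition 53]{vitagliano}. Your key observation for $k\ge 3$ --- that $A$-linearity of $\nabla_k$ promotes $D^{\nabla}_k$ to order $\le k-1$ --- is correct and does what you want: one checks that $[D^{\nabla}_k,a]=0$ for $a\in A$ (contraction is $A$-linear in the polyvector and $\nabla_k$ is $A$-linear), and that the order-$k$ symbol on $1$-polyvectors cancels because contraction with a $1$-form is a derivation of the polyvector product and $\nabla_k$ commutes with the resulting coefficients $\xi_j(\omega)\in A$. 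Two smaller glosses: your filtration-weight argument isolating $j=k$ only disposes of $j>k$; for $j<k$ the term $[-]_{\nabla_j,k}$ lands in \emph{higher} filtration and must instead be killed by the order bound on $D^{\nabla}_j$ (this is exactly how the paper phrases it). And reducing the order analysis to wedges of $1$-polyvectors tacitly uses that $\DR^r(A,\nabla)$ is generated by $A$ and $\HHom_A(\Omega^1_A,A)$, i.e.\ some semi-smoothness of $A$.

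The genuine gap is in the $k=2$ identification. The two summands of $D^{\nabla}_2$, namely $\pi\mapsto\nabla_2(\pi\lrcorner-)$ and $\pi\mapsto\pm\pi(d-)$, are not separately endomorphisms of $\DR^r(A,\nabla)$: each fails $A$-linearity in the form variable, and only their sum is well defined (this is precisely the ``balancing'' you invoke for the order bound). So one cannot speak of the symbol of the $\pi(d\omega)$ term on its own. Moreover both contributions have the same filtration weight $-1$, so nothing ``drops out on the graded level'': there is no strictly lower-weight term to discard. Concretely, on two vector fields the $d\omega$ term contributes only the $A$-bilinear expression $(\xi_1\wedge \xi_2)\lrcorner\, d\omega$, which is not the Lie bracket; the derivative terms $\xi_1(d\,\xi_2(\omega))-\xi_2(d\,\xi_1(\omega))$ needed for $[\xi_1,\xi_2]$ arise exactly from the non-$A$-linearity $\nabla_2(a\xi)=a\nabla_2(\xi)-\xi(da)$ applied to $(\xi_1\xi_2)\lrcorner\omega$, which is (up to normalisation) $\xi_1(\omega)\xi_2-\xi_2(\omega)\xi_1$. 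What does cancel in the symbol is the $A$-linear part $\xi_i(\omega)\nabla_2(\xi_j)$ of the $\nabla_2$ contribution, against the terms $\xi_i\,D^{\nabla}_2(\xi_j)$ of the iterated commutator --- a cancellation by the commutator structure, not by filtration weight. With that correction the $k=2$ computation does recover the Schouten--Nijenhuis bracket, and the rest of your argument stands.
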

\begin{proof}
 The argument of \cite[Proposition 53]{vitagliano} adapts to our slightly different setting to show that the operators $D^{\nabla}_k$ define a $BV_{\infty}$-algebra structure.  It follows directly from the definitions that $F_i \cdot F_j \subset F_{i+j}$ and  that $D^{\nabla}_k(F_i) \subset F_{i+1-k}$

Now, differential operators of order less than $k$ do not contribute to the  $(k-1)$-fold commutator $[-]_{\nabla,k}$, so $[-]_{\nabla,k}= \sum_{j \ge k} [-]_{\nabla_j,k}$, which is of weights at most $(1-k)$ with respect to the filtration. Observe that the leading term of $[-]_{\nabla,k}$ is $[-]_{\nabla_k,k}$. The calculation of \cite[Proposition 53]{vitagliano} shows that this structure corresponds to the $L_{\infty}$-structure on polyvectors induced by the $L_{\infty}$-structure on $\HHom_A(\Omega^{1}_{A/R},A)$, which is just the Schouten--Nijenhuis bracket.
\end{proof}

\subsubsection{Quantised  polyvectors}\label{qpolsn}

\begin{definition}
Given a flat right connection $\nabla$ on $A$, define the complex 
of quantised $(-2)$-shifted polyvector fields  on $A$ by
\[
 Q\widehat{\Pol}(A,\nabla,-2):= \prod_j \hbar^{j-1}F_j\DR^r(A,\nabla).
\]
\end{definition}
It follows from Lemma \ref{DRrBVlemma} that   for $a_i, b \in \hbar Q\widehat{\Pol}(A,\nabla,-2)$, the iterated graded commutators satisfy
\[
 [a_1,[a_2, \ldots, [a_k,D^{\nabla}] \ldots ](b) \in \hbar^{k+1} Q\widehat{\Pol}(A,\nabla,-2), 
\]
and that  $\hbar Q\widehat{\Pol}(A,\nabla,-2)$ is closed under multiplication. Thus multiplication and the operator $D^{\nabla}$  make $\hbar Q\widehat{\Pol}(A,\nabla,-2) $ into a  filtered $BV_{\infty}$-algebra
with respect to the 
$\hbar$-adic filtration.
Moreover, the induced $L_{\infty}$-algebra structure from  Definition \ref{LinftyBVdef}  extends naturally to an $R\brh$-linear $L_{\infty}$-algebra structure on   $Q\widehat{\Pol}(A,\nabla,-2)[-1]$.

\begin{definition}\label{QFdef}
Define a decreasing filtration $\tilde{F}$ on  $Q\widehat{\Pol}(A,\nabla,-2)$ by 
\[
 \tilde{F}^iQ\widehat{\Pol}(A,\nabla,-2):= \prod_{j \ge i} \hbar^{j-1}F_j\DR^r(A,\nabla).
\]
\end{definition}
This filtration has the properties that $Q\widehat{\Pol}(A,\nabla,-2)= \Lim_i Q\widehat{\Pol}(A,\nabla,-2)/\tilde{F}^i$, 
with multiplication in $\DR^r$ giving us a commutative product
\[
 \tilde{F}^iQ\widehat{\Pol}(A,\nabla,-2) \by \tilde{F}^jQ\widehat{\Pol}(A,\nabla,-2) \to \hbar^{-1} \tilde{F}^{i+j}Q\widehat{\Pol}(A,\nabla,-2).
\]
The operators $D^{\nabla}_k$ satisfy
 $D^{\nabla}_k (\tilde{F}^i) \subset \hbar^{k-1} \tilde{F}^{i+1-k} \subset\tilde{F}^i  $ and 
\[
[\tilde{F}^{i_1}, \ldots \tilde{F}^{i_m}]_{\nabla,m} \subset \tilde{F}^{i_1+ \ldots +i_m +1-m}.
\]
%%powers of  $\hbar$ REALLY do match up there, as do filtrations.

\subsection{$(-2)$-shifted quantisations}

\subsubsection{The space of quantisations}

\begin{definition}\label{mcPLdef}
 Given an $L_{\infty}$-algebra $L$,  the Maurer--Cartan set is defined by 
\[
\mc(L):= \{\omega \in  L^{1}\ \,|\, \sum_{n \ge 1} \frac{[\omega, \ldots, \omega]_n}{n!}  =0 \in   L^{2}\},
\]
where $[-]_1$ is the differential.

Following \cite{hinstack}, define the Maurer--Cartan space $\mmc(L)$ (a simplicial set) of %a nilpotent  $L_{\infty}$-algebra 
$L$ by
\[
 \mmc(L)_n:= \mc(L\ten_{\Q} \Omega^{\bt}(\Delta^n)),
\]
with the obvious simplicial operations, where 
\[
\Omega^{\bt}(\Delta^n)=\Q[t_0, t_1, \ldots, t_n,\delta t_0, \delta t_1, \ldots, \delta t_n ]/(\sum t_i -1, \sum \delta t_i)
\]
is the commutative dg algebra of de Rham polynomial forms on the $n$-simplex, with the $t_i$ of degree $0$.
\end{definition}
% 
% \begin{definition}
% Given an inverse system $L=\{L_{\alpha}\}_{\alpha}$ of nilpotent DGLAs, define
% \[
%  \mc(L):= \Lim_{\alpha} \mc(L_{\alpha}) \quad  \mmc(L):= \Lim_{\alpha} \mmc(L_{\alpha}).
% \]
% Note that  $\mc(L)= \mc(\Lim_{\alpha}L_{\alpha})$, but $\mmc(L)\ne \mmc(\Lim_{\alpha}L_{\alpha}) $. 
% \end{definition}

\begin{definition}\label{Gdef}
We now define another decreasing filtration $G$ on  $Q\widehat{\Pol}(A, \nabla,-2)$ by setting
\[
 G^iQ\widehat{\Pol}(A, \nabla,-2):= \hbar^{i}Q\widehat{\Pol}(A, \nabla,-2).
\]
We then set $G^i \tilde{F}^p:= G^i \cap \tilde{F}^p$.
\end{definition}

 Note that $G^i \subset \tilde{F}^i$, and  beware that $G^i \tilde{F}^p$ is not the same as $\hbar^{i} \tilde{F}^p$ in general, since 
\begin{align*}
 G^i\tilde{F}^pQ\widehat{\Pol}(A, \nabla,-2) &= \prod_{j \ge p} \hbar^{j-1}F_{j-i}\DR^r(A,\nabla),\\
\hbar^{i}\tilde{F}^pQ\widehat{\Pol}(A, \nabla,-2) &= \prod_{j \ge p+i}\hbar^{j-1} F_{j-i}\DR^r(A,\nabla).
\end{align*}

We will also consider the convolution  $G*\tilde{F}$, given by $(G*\tilde{F})^p:= \sum_{i+j=p}G^i\cap\tilde{F}^j$ ; explicitly,
\[
(G*\tilde{F})^p Q\widehat{\Pol}(A, \nabla,-2) = 
\prod_{j<p}\hbar^{j-1}F_{2j-p}\DR^r(A,\nabla) \by \prod_{j \ge p}\hbar^{j-1} F_j\DR^r(A,\nabla).
\]
In particular, $(G*\tilde{F})^2 Q\widehat{\Pol}(A, \nabla,-2) = A \oplus \tilde{F}^2 Q\widehat{\Pol}(A, \nabla,-2)$.

%%to see equivalence of topologies, note $G^k \subset \tilde{F}^k$, so $(G*\tilde{F})^{2k-1} \subset \tilde{F}^k \subset (G*\tilde{F})^k$.

\begin{definition}\label{Qpoissdef}
Define  the space $Q\cP(A, \nabla,-2)$ of $E_{-1}$ quantisations of $(A,\nabla)$ over $R$  to be given by the simplicial 
set
\[
 Q\cP(A, \nabla,-2):= \Lim_i \mmc( \tilde{F}^2 Q\widehat{\Pol}(A, \nabla,-2)[-1]/\tilde{F}^{i+2}).
\]
Also write
\[
 Q\cP(A, \nabla,-2)/G^k:= \Lim_i\mmc(\tilde{F}^2 Q\widehat{\Pol}(A, \nabla,-2)[-1]/(\tilde{F}^{i+2}+G^k)),
\]
 so $Q\cP(A, \nabla,-2)= \Lim_k Q\cP(A, \nabla,-2)/G^k$. %%note $\tilde{F}$ the correct filtration for the topology
\end{definition}

\subsubsection{The quantum master equation}\label{QME}

%%Multiplying $\HHom_A(\Omega^p_{A},A)$ by $\hbar^p$ gives an IM $(\hbar^{-1}\hat{\DR}^r(A,\nabla) \brh, \sum \hbar^{k-1}D_k^{\nabla}) \to  Q\widehat{\Pol}(A, \nabla,-2)$. (Note that hat on LHS: important to take $\prod \ne \bigoplus$.)  

By  \cite[Theorem 3.7]{BraunLazarevHtpyBV}, there is an $L_{\infty}$-isomorphism from $\DR^r(A,\nabla)[-1] $ with the $L_{\infty}$-structure $[-]_{\nabla} $ of Definition \ref{LinftyBVdef} to the complex $\DR^r(A,\nabla)[-1] $ with abelian $L_{\infty}$ structure. Applied to the pro-nilpotent $L_{\infty}$-algebra $\hbar\DR^r(A,\nabla)\brh[-1]$, this gives an isomorphism
\begin{align*}
 \Lim_r \mmc(\hbar(\DR^r(A,\nabla)[\hbar]/\hbar^r)[-1]; [-]_{\nabla}) &\to \Lim_r \mmc(\hbar(\DR^r(A,\nabla)[\hbar]/\hbar^r)[-1]; D^{\nabla}, 0, 0,\ldots)\\
S \mapsto e^{S}-1.
\end{align*}
In particular, for $S \in \DR^r(A,\nabla)^0$, \cite[Remark 3.6]{BraunLazarevHtpyBV} shows that the expression  $\sum_n[S, \ldots,S]_{n, \nabla}/n!$ can be rewritten as %$D_{S}^{\nabla}(1)$ for $D^{\nabla}_{S}:=e^{-S}\circ D^{\nabla}\circ e^{S}$
$e^{-S}D_{S}^{\nabla}(e^S)$, so the  Maurer--Cartan equation $\sum_n[S, \ldots,S]_{n, \nabla}/n!=0$ is equivalent to the quantum master equation $D^{\nabla}(e^{S})=0$.

Since the  target $L_{\infty}$-algebra is abelian,  $\Lim_r \mmc(\hbar(\DR^r(A,\nabla)[\hbar]/\hbar^r)[-1]; D^{\nabla}, 0, 0,\ldots)$    should be thought of as the space of $0$-cocycles in the right de Rham complex $\hbar\DR^r(A,\nabla)\brh$. Its homotopy groups are given by
\[
 \pi_i \Lim_r \mmc(\hbar(\DR^r(A,\nabla)[\hbar]/\hbar^r)[-1]; D^{\nabla}, 0, 0,\ldots) \cong \hbar \H^{-i}(\DR^r(A,\nabla))\brh.
\]
A smaller, but weakly equivalent space can be constructed by truncating the complex $\DR^r(A,\nabla)$ in non-positive degrees, and applying the inverse of the Dold--Kan normalisation functor to obtain a simplicial abelian group.

Our complex  $Q\widehat{\Pol}(A, \nabla,-2)$ is not itself a $BV_{\infty}$-algebra, but it is an $L_{\infty}$-subalgebra of $(\hbar(\DR^r(A,\nabla)[\hbar]/\hbar^r)[-1]; [-]_{\nabla})$. Therefore  sending $S$ to $e^S-1$ gives natural maps
\begin{align*}
  Q\cP(A, \nabla,-2) &\to \Lim_r \mmc(\hbar(\DR^r(A,\nabla)[\hbar]/\hbar^r)[-1]; D^{\nabla}, 0, 0,\ldots),\\
  \pi_iQ\cP(A, \nabla,-2) &\to \hbar \H^{-i}(\DR^r(A,\nabla))\brh
\end{align*}
from quantisations to power series %of $0$-cochains 
in right de Rham cohomology. This will lead to comparisons with other constructions in \S \ref{cfBJ}.

\subsubsection{The centre of a quantisation}\label{centresn}

\begin{definition}\label{TQpoldef0}
 Define the filtered tangent $L_{\infty}$-algebra of quantised polyvectors by 
\begin{align*}
 TQ\widehat{\Pol}(A, \nabla,-2)&:= Q\widehat{\Pol}(A, \nabla,-2)\oplus \prod_{p \ge 0}\hbar^{p}F_p\DR^r(A,\nabla)\eps,\\
\tilde{F}^jTQ\widehat{\Pol}(A,\nabla,-2)&:= \tilde{F}^jQ\widehat{\Pol}(A,\nabla,-2)\oplus \prod_{p \ge j}\hbar^{p}F_p\DR^r(A,\nabla)(M)\eps,
\end{align*}
for $\eps$ of degree $0$ with $\eps^2=0$. The $L_{\infty}$ operations are  given by $ [u_1+v_1\eps, \ldots,  u_n+v_n\eps]_n= [u_1,\ldots, u_n]_n+ \sum_{i=1}^n   [u_1,\ldots, u_{i-1}, v_i, u_{i+1}, \ldots,u_n]\eps$.
\end{definition}

\begin{definition}\label{TQpoldef}
Given a Maurer--Cartan element $S \in  \mc(\tilde{F}^2Q\widehat{\Pol}(A, \nabla,-2)[-1]) $, define the centre of $(A, \nabla,S)$ by 
\[
 T_{S}Q\widehat{\Pol}(A, \nabla,-2):= \prod_{p \ge 0}\hbar^{p}F_p\DR^r(A,\nabla),
\]
with differential $D^{\nabla}_S =e^{-S} \circ D^{\nabla} \circ e^S$ (necessarily square-zero). %%note that $\eps$ coeff in MC eqn for $S+a \eps$ becomes $e^{-S}D(e^Sa) - ae^{-S}D(e^S)\eps= D_S(a) -aD_S(1)$, and second term vanishes.
 
This inherits a commutative multiplication from $\DR^r(A,\nabla)$, and it has a filtration
\[
 \tilde{F}^iT_{S}Q\widehat{\Pol}(A, \nabla,-2):= \prod_{p \ge i}\hbar^{p}F_p\DR^r(A,\nabla),
\]
with $\tilde{F}^i \cdot \tilde{F}^j\subset \tilde{F}^{i+j}$. %; this makes $T_{S}Q\widehat{\Pol}(A, \nabla,-2) $ a filtered $BV_{\infty}$-algebra in the sense of Definition \ref{BVdef}. No it doesn't, we've been over this: it has to be $G$ for that defn.

Given $S \in  \mc(F^2Q\widehat{\Pol}(A, \nabla,-2)[-1]/\tilde{F}^p)$, we define $T_{S}Q\widehat{\Pol}(A, \nabla,-2)/\tilde{F}^p$ similarly.
\end{definition}

Observe that regarding $T_{S}Q\widehat{\Pol}(A, \nabla,-2) $ as an abelian $L_{\infty}$-algebra,  the space
\[
T_{S}Q\cP(A, \nabla,-2)/\tilde{F}^p:= \mmc(\tilde{F}^2T_{S}Q\widehat{\Pol}(A, \nabla,-2)[-1]/\tilde{F}^p;D^{\nabla}_S, 0, 0,\ldots )
\]
 is just the fibre of 
\[
\mmc( \tilde{F}^2TQ\widehat{\Pol}(A, \nabla,-2)[-1]/\tilde{F}^p) \to  \mmc(\tilde{F}^2 Q\widehat{\Pol}(A, \nabla,-2)[-1]/\tilde{F}^p)
\]
 over $S$.

Similarly to Definition \ref{Gdef}, there  is a filtration $G$ on  
$TQ\widehat{\Pol}(A, \nabla,-2)$ and  $T_{S}Q\widehat{\Pol}(A, \nabla,-2) $ given by powers of $\hbar$. 
This filtration makes $T_{S}Q\widehat{\Pol}(A, \nabla,-2) $ into  a filtered $BV_{\infty}$-algebra in the sense of Definition \ref{BVdef}, and  $T_{S}Q\widehat{\Pol}(A, \nabla,-2)/\tilde{F}^p$ is then
 also a filtered $BV_{\infty}$-algebra (with respect to the filtration $G$) since $\tilde{F}^p$ is an ideal.

Since $\gr_G^i\tilde{F}^{p-i}Q\widehat{\Pol}= \prod_{j \ge p-i} \hbar^{j-1}\gr^F_{j-i\DR^r(A,\nabla)}$,
the associated gradeds of the filtration $G$ admit maps 
  \begin{align*}
 \gr_G^i\tilde{F}^pQ\widehat{\Pol}(A, \nabla,-2) &\to \prod_{j \ge p}  \hbar^{j-1}\HHom_A(\Omega^{j-i}_{A/R},A)[j-i]\\
\gr_G^i\tilde{F}^pT_{S}Q\widehat{\Pol}(A,\nabla,-2) &\to \prod_{j \ge p} \hbar^{j} \HHom_A(\Omega^{j-i}_{A/R},A)[j-i].
\end{align*}
which are isomorphisms when $A$ is  an affine dg manifold in the sense of \cite[Definition 2.5.1]{Quot}; in particular whenever $A$ is cofibrant as a CDGA over $R$.

For the filtrations $F$ of Definitions \ref{Fdef} and \ref{Tpoldef}, we may rewrite these maps as %%these are correct
 \begin{align*}
 \gr_G^i\tilde{F}^pQ\widehat{\Pol}(A, \nabla,-2) &\to  \hbar^{i}F^{p-i}\widehat{\Pol}(A, -2),\\ 
\gr_G^i\tilde{F}^pT_{S}Q\widehat{\Pol}(A, \nabla,-2) &\to \hbar^{i}F^{p-i}T_{\pi_{S}}\widehat{\Pol}(A, -2),
\end{align*}
where $\pi_{S} \in \mc(F^2\widehat{\Pol}(A, \nabla,-2))$ denotes the image of $S$ under the map  $ \gr_G^0\tilde{F}^2Q\widehat{\Pol}(A, \nabla,-2) \to  F^2\widehat{\Pol}(A,-2)$. 

Since the cohomology groups of $T_{\pi_{S}}\widehat{\Pol}(A,-2)$ are shifted Poisson  cohomology, we will refer to the cohomology groups of  $T_{S}Q\widehat{\Pol}(A, \nabla,-2)$ as quantised Poisson cohomology.

We write $Q^{tw}\cP(A, \nabla,-2):= \mmc((G*\tilde{F})^2Q\widehat{\Pol}(A, \nabla,-2))$ and   $T_{S}Q^{tw}\cP(A, \nabla,-2):= \mmc((G*\tilde{F})^2T_{S}Q\widehat{\Pol}(A, \nabla,-2))$.

\begin{definition}\label{Qnonnegdef}
 Say that an $E_{-1}$ quantisation $S =  \sum_{j \ge 2} S_j \hbar^{j}$ is non-degenerate if the map 
\[
S_2^{\sharp}\co   \Omega^1_A \to \HHom_A(\Omega^1_A,A)[2]
\]
is a quasi-isomorphism and $\Omega^1_A $ is perfect.
\end{definition}

\begin{definition}\label{TQPdef}
Define the tangent spaces of quantisations and of twisted quantisations by
\begin{eqnarray*}
 TQ\cP(A, \nabla,-2)&:=& \Lim_i \mmc( \tilde{F}^2 TQ\widehat{\Pol}(A, \nabla,-2)/\tilde{F}^{i+2}),\\
TQ^{tw}\cP(A, \nabla,-2)&:=& \Lim_i \mmc( (G*\tilde{F})^2 TQ\widehat{\Pol}(A, \nabla,-2)/\tilde{F}^{i+2}),
\end{eqnarray*}
with $TQ\cP(A, \nabla,-2)/G^k$, $TQ^{tw}\cP(A, \nabla,-2)/G^k $ defined similarly.
 \end{definition}
These are simplicial sets over $Q\cP(A, \nabla,-2)$ (resp.  $Q^{tw}\cP(A, \nabla,-2)$, $Q\cP(A, \nabla,-2)/G^k$, $Q^{tw}\cP(A, \nabla,-2)/G^k$), fibred in simplicial abelian groups.  

\begin{definition}\label{Qsigmadef}
Define the canonical tangent vector
\[
  \sigma=-\pd_{\hbar^{-1}}\co Q\widehat{\Pol}(A, \nabla,-2) \to TQ\widehat{\Pol}(A, \nabla,-2)
\]
 by $\alpha \mapsto \alpha + \eps \hbar^{2}\frac{\pd \alpha}{\pd \hbar}$. Note that this is a morphism of filtered  DGLAs, so gives a map $ \sigma \co Q\cP(A, \nabla,-2)\to TQ\cP(A, \nabla,-2)$, with $\sigma(S) \in S+\eps \z^1(\tilde{F}^2T_{S}Q\widehat{\Pol}(A, \nabla,-2))$ for $S \in Q\cP(A, \nabla,-2)_0$. 
\end{definition}

\subsection{Generalised pre-symplectic structures}\label{DRsn}

\begin{definition}
Define the (left) de Rham complex $\DR(A/R)$ to be the product total complex of the bicomplex
\[
 A \xra{d} \Omega^1_{A/R} \xra{d} \Omega^2_{A/R}\xra{d} \ldots,
\]
so the total differential is $d \pm \delta$.

We define the Hodge filtration $F$ on  $\DR(A/R)$ by setting $F^p\DR(A/R) \subset \DR(A/R)$ to consist of terms $\Omega^i_{A/R}$ with $i \ge p$. In particular, $F^p\DR(A/R)= \DR(A/R)$ for $p \le 0$.
\end{definition}

\begin{definition}
When $A$ is a  CDGA over $R$ which is an affine dg manifold, recall that  a $(-2)$-shifted pre-symplectic structure $\omega$ on $A/R$ is an element
\[
 \omega \in \z^{0}F^2\DR(A/R).
\]
In \cite{PTVV}, shifted pre-symplectic structures are referred to as closed $2$-forms.

A $(-2)$-shifted pre-symplectic structure $\omega$ is called symplectic if $\omega_2 \in \z^{-2}\Omega^2_{A/R}$ induces a quasi-isomorphism
\[
 \omega_2^{\sharp} \co \Hom_A(\Omega^1_{A/R},A) \to \Omega^1_{A/R}[-2],
\]
and $\Omega^1_{A/R} $ is perfect as an $A$-module.
\end{definition}

We now recall a construction from \cite[Definition \ref{DQvanish-DRprimedef}]{DQvanish} which allows us to formulate compatibility between quantisations and a generalisation of pre-symplectic structures. 

\begin{definition}\label{DRprimedef}
Write $A^{\ten \bt +1}$ for the cosimplicial CDGA $n \mapsto A^{n +1}$ given by the \v Cech nerve, with $I$ the kernel of the diagonal map $A^{\ten \bt +1}\to A$. This has a filtration $F$ given by powers $F^p :=(I)^p$ of $I$, and we define the filtered cosimplicial CDGA $\hat{A}^{\ten \bt +1}$ to be the completion
\begin{align*}
 \hat{A}^{\ten \bt +1}&:= \Lim_qA^{\ten \bt +1}/F^q,\\
F^p \hat{A}^{\ten \bt +1}&:= \Lim_qF^p/F^q.
\end{align*}

We then take the Dold--Kan conormalisation $N\hat{A}^{\bt +1}$, which becomes a filtered bi-DGAA via the  Alexander--Whitney cup product. Explicitly,  $N^n \hat{A}^{\bt +1}$ is the intersection of the kernels of all the big diagonals $  \hat{A}^{n +1}\to \hat{A}^{n}$, and the cup  product is given by 
\[
 (a_0 \ten \ldots \ten a_m) \smile (b_0 \ten \ldots \ten b_n)= a_0 \ten \ldots \ten a_{m-1}\ten (a_mb_0) \ten b_1 \ten \ldots \ten b_n. 
\]

 We then define $\DR'(A/R)$ to be the product total complex
\[
 \DR'(A/R):=\Tot^{\Pi} N\hat{A}^{\bt +1}
\]
regarded as a filtered DGAA over $R$, with $F^p\DR'(A/R):=\Tot^{\Pi} NF^p\hat{A}^{\bt +1}$.
\end{definition}

The following is standard (for instance \cite[Lemma \ref{DQvanish-cfDRlemma}]{DQvanish}):
\begin{lemma}\label{cfDRlemma}
 There is a filtered quasi-isomorphism $\DR'(A/R) \to \DR(A/R)$, given by $N^n\hat{A}^{\bt +1}\to 
N^n\hat{A}^{\bt +1}/F^{n+1} \cong (\Omega^1_{A/R})^{\ten_A n} \to  \Omega^n_{A/R}$. 
\end{lemma}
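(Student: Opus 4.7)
The plan is to prove the lemma by a standard filtered quasi-isomorphism argument, reducing the statement to an HKR-type computation on associated gradeds.

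First I would verify that the prescribed composition $N^n\hat{A}^{\bt+1}\to N^n\hat{A}^{\bt+1}/F^{n+1}\cong(\Omega^1_{A/R})^{\ten_A n}\to\Omega^n_{A/R}$ is compatible with the filtrations. For $p>n$ the composition restricted to $F^pN^n\hat{A}^{\bt+1}$ vanishes automatically, while for $p\le n$ its image sits in $\Omega^n_{A/R}\subset F^p\DR(A/R)$ since $n\ge p$. Multiplicativity with respect to the Alexander--Whitney cup product is a direct check on pure tensors. Both filtrations are complete, so by the standard spectral sequence comparison it suffices to show that the induced map on associated gradeds $\gr_F^p\DR'(A/R)\to\gr_F^p\DR(A/R)=(\Omega^p_{A/R},\delta)[-p]$ is a quasi-isomorphism for every $p\ge 0$.

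Second I would identify the source on associated gradeds. Writing $I$ for the diagonal ideal of $A^{\ten n+1}\to A$, one has $I/I^2\cong(\Omega^1_{A/R})^{\oplus n}$, with the $j$-th summand corresponding to the gap between the $j$-th and $(j+1)$-st factors; iterating gives $I^p/I^{p+1}\cong\Sym^p_A((\Omega^1_{A/R})^{\oplus n})$. The codegeneracies $s_i$ collapse the $i$-th gap, so the normalisation $N^n$ imposes that every gap is used, forcing $p\ge n$; at $p=n$ it cuts out exactly $(\Omega^1_{A/R})^{\ten_A n}$, which surjects onto $\Omega^n_{A/R}$ by antisymmetrisation. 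Hence the cosimplicial chain complex $n\mapsto N^n\gr_F^p\hat{A}^{\bt+1}$ is supported in cosimplicial degrees $0\le n\le p$, with the top summand as described by the stated formula.

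The remaining and principal step is to show that this cosimplicial complex has cohomology concentrated in cosimplicial degree $p$, isomorphic to $\Omega^p_{A/R}$ via antisymmetrisation. For smooth or cofibrant $A$ this is the classical HKR theorem for the completed Hochschild complex, proved by an explicit shuffle contracting homotopy on $\Sym^p$ of a sum-of-copies cosimplicial module. For general $A$ one reduces to the cofibrant case by a functorial CDGA resolution, since all the constructions involved commute with the relevant filtered colimits and preserve trivial cofibrations. The main obstacle I would anticipate is precisely this HKR vanishing on associated gradeds: verifying that the shuffle homotopy is well defined in our completed cosimplicial setting, and that the sign and factorial conventions match up so that the resulting isomorphism on cohomology is indeed the claimed antisymmetrisation map rather than a nonzero scalar multiple of it.
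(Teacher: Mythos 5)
Your argument is correct and is essentially the standard proof that the paper invokes by citation rather than writing out: check the map is a filtered chain map, identify $\gr_F^p$ of the normalised completed \v Cech nerve as the sub-sum of $\Sym^{p_1}_A\Omega^1_{A/R}\ten_A\cdots\ten_A\Sym^{p_n}_A\Omega^1_{A/R}$ with all $p_j\ge 1$, and apply the Eilenberg--Zilber/HKR d\'ecalage to conclude that its cohomology is $\Omega^p_{A/R}$ concentrated in cosimplicial degree $p$, matching $\gr_F^p\DR(A/R)$. The one caveat is your closing reduction ``to the cofibrant case by a functorial resolution'': this is not available for arbitrary $A$, since $\Omega^1_{A/R}$ and $\DR(A/R)$ are not invariant under such replacement; what actually makes the argument work here is the standing semi-smoothness hypothesis on $A$, which gives quasi-regularity of the diagonal ideal and hence the isomorphisms $I^p/I^{p+1}\cong\Sym^p_A(I/I^2)$ on which your identification of associated gradeds rests.
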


\begin{definition}\label{tildeFDRdef}
Define a decreasing filtration $\tilde{F}$ on $ \DR'(A/R)\llbracket\hbar\rrbracket$ by 
\[
 \tilde{F}^p\DR'(A/R):= \prod_{i\ge 0} \hbar^{i}F^{p-i}\DR'(A/R),
\]
where we adopt the convention that $F^j\DR'=\DR'$ for all $j\le 0$.

Define  further filtrations $G, G*\tilde{F}$ by $ G^k \DR'(A/R)\llbracket\hbar\rrbracket = \hbar^{k}\DR'(A/R)\llbracket\hbar\rrbracket$, and $(G*\tilde{F})^p:= \sum_{i+j=p}G^i\cap\tilde{F}^j$, so
\[
 (G*\tilde{F})^p= \prod_{i\ge 0}\hbar^{i} F^{p-2i}\DR'(A/R).
\]
\end{definition}

This makes  $(\DR'(A/R)\llbracket\hbar\rrbracket,G*\tilde{F}) $ into a filtered DGAA, since $\tilde{F}^p\tilde{F}^q \subset \tilde{F}^{p+q}$ and similarly for $G$.

\begin{definition}
Define a generalised $(-2)$-shifted pre-symplectic structure on a cofibrant CDGA (or just an affine dg manifold) $A/R$ to be an element
\[
 \omega \in \z^0((G*\tilde{F})^2\DR'(A/R)\llbracket\hbar\rrbracket) = \z^0(F^2\DR'(A/R)) \by \hbar \z^0\DR'(A/R)\llbracket\hbar\rrbracket.
\]
Call this symplectic if $\Omega^1_{A/R}$ is perfect as an $A$-module and the  leading term $\omega_0 \in \z^0F^2\DR'(A/R)$ induces a quasi-isomorphism
\[
 [\omega_0]^{\sharp} \co \Hom_A(\Omega^1_{A/R},A) \to \Omega^1_{A/R}[-2],
\]
for $[\omega_0] \in \z^{-2}\Omega^2_{A/R}$ the image of $\omega_0$ modulo $F^3$.
\end{definition}

\begin{definition}\label{GPreSpdef}
 Define the space of generalised $(-2)$-shifted pre-symplectic structures on $A/R$ to be the simplicial set
\[
 G\PreSp(A/R,-2):= \Lim_i\mmc( (G*\tilde{F})^2\DR'(A/R)\llbracket\hbar\rrbracket[-1]/\tilde{F}^{i+2}), 
\]
where we regard the cochain complex  $\DR'(A/R)$ as a  DGLA with trivial bracket.

Also write $G\PreSp(A/R,-2)/\hbar^{k}:= \Lim_i\mmc( ((G*\tilde{F})^2\DR'(A/R)[\hbar]/(G^k +\tilde{F}^{i+2)} ))$, so $ G\PreSp(A/R,-2)= \Lim_k G\PreSp(A/R,-2)/\hbar^{k} $. Write $\PreSp = G\PreSp/\hbar$.

Set $G\Sp(A/R,-2) \subset G\PreSp(A/R,-2)$ to consist of the symplectic structures --- this is a union of path-components.
\end{definition}
Note that  $G\PreSp(A/R,-2)$  is canonically weakly  equivalent to the Dold--Kan denormalisation of the good truncation complex $\tau^{\le 0}((G*\tilde{F})^2\DR(A/R)\llbracket\hbar\rrbracket)$ (and similarly for the various quotients we consider),   but the description in terms of $\mmc$ will simplify comparisons. In particular, we have
\[
 \pi_iG\PreSp(A/R,-2)\cong \H^{-i}(F^2\DR(A/R)) \by \hbar \H^{-i}(\DR(A/R))\llbracket\hbar\rrbracket. 
\]

\subsection{Compatibility of quantisations and symplectic structures}

We will now develop the notion of compatibility between a (truncated) generalised $(-2)$-shifted pre-symplectic structure and a (truncated) $E_{-1}$  quantisation. The case $k=1$ recovers the notion of compatibility between $(-2)$-shifted pre-symplectic and Poisson structures from \cite{poisson}. From now on we fix a  CDGA $A$ over $R$ for which $(\Spec A_0, A)$ is an affine dg manifold.
% 
% The following is \cite[Lemma \ref{DQvanish-mulemma1}]{DQvanish}:
% \begin{lemma}\label{mulemma1}
%  Take  a complete    filtered  graded-associative $R$-algebra $(B,\Fil^{\bt})$ and a  morphism $\phi \co A^{\#} \to \Fil^0B$ of graded $R$-algebras; assume that the left and right $A$-module structures on $\gr_{\Fil}B$ agree. Then for any  $\Delta \in \Fil^0B^1$,  there is an  
% associated morphism 
% \[
%  \mu(-,\Delta) \co (\DR'(A)^{\#},F^{\bt}) \to (B,\Fil^{\bt})
% \]
% of filtered  graded-associative $R$-algebras
% induced by the graded algebra map on $A^{\ten \bt +1}$ determined by  $\mu(1\ten 1, \Delta) = \Delta$ and $\mu(a, \Delta)=a$ for $ a \in A$.
% 
% Given $\rho \in \Fil^jB^k$, there is then a filtered $R$-linear derivation
% \[
%  \nu(-, \Delta, \rho) \co (\DR'(A/R)^{\#},F^{\bt}) \to (B[k], \Fil^{\bt+j})
% \]
% induced by the $\mu(-,\Delta)$-derivation on $A^{\ten \bt +1}$ determined by $\nu(1\ten 1, \Delta, \rho)= \rho$. 
% \end{lemma}

\begin{proposition}\label{QPolmudef}
 Given  $S \in ((G*\tilde{F})^2Q\widehat{\Pol}(A,\nabla,-2)/G^k)^{1}$, there is a  chain map
\[
 \mu(-,S) \co \DR'(A/R)[\hbar]/G^k \to T_SQ\widehat{\Pol}(A,\nabla,-1)/G^k
\]
of graded associative $R[\hbar]/\hbar^{k}$-modules, respecting the filtrations $(G*\tilde{F})$;
this is  induced by the maps 
\[
a_0 \ten a_1 \ten \ldots \ten a_n \mapsto a_0 (D_S^{\nabla}-\delta)(a_1 (D_S^{\nabla}-\delta)(\ldots (D^{\nabla}_S-\delta)(a_n)\ldots )) 
\]
on $A^{\ten \bt +1}$.

Given $\rho \in ((G*\tilde{F})^pQT_S\widehat{\Pol}(A,\nabla,-2)/G^k)^r$, there is then an $R[\hbar]/\hbar^{k}$-linear derivation
\[
 \nu(-, S, \rho) \co (\DR'(A/R)[\hbar]/\hbar^{k},(G*\tilde{F})^{\bt})  \to (T_SQ\widehat{\Pol}(A,\nabla,-2)[r]/G^k,(G*\tilde{F})^{\bt+p}),
\]
which is characterised by the expression
\[
 \mu(\omega, S+ \eps \rho)= \mu(\omega,S) + \eps \nu(\omega, S,\rho),
\]
where $\eps^2=0$.
\end{proposition}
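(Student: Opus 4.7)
The plan is to mimic the analogous construction in \cite[\S\ref{DQvanish-}]{DQvanish} for the $(-1)$-shifted case, with the $E_0$ Hochschild differential there replaced here by $D_S^{\nabla}-\delta$, and to verify the three compatibilities (filtration, cosimplicial normalisation, multiplicativity) separately.

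First, I would check that the formula
\[
a_0\ten a_1\ten\cdots\ten a_n \;\longmapsto\; a_0(D_S^{\nabla}-\delta)\bigl(a_1(D_S^{\nabla}-\delta)(\cdots(D_S^{\nabla}-\delta)(a_n)\cdots)\bigr)
\]
is well-defined as a map from the filtered completed tensor algebra. Since $D^{\nabla}=\delta+\sum_{k\ge 2}D^{\nabla}_k$ with $D^{\nabla}_k$ of differential-operator order $\le k$, and since conjugation $D_S^{\nabla}=e^{-S}\circ D^{\nabla}\circ e^S$ merely adds iterated brackets with $S\in\tilde{F}^2$, the operator $D_S^{\nabla}-\delta$ sends $A$ into $\tilde{F}^1T_SQ\widehat{\Pol}(A,\nabla,-2)$, with the $j$-th iterated commutator with functions landing in $\tilde{F}^j$. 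Each factor $a_i$ acts by multiplication, so the formula respects the multiplicative filtration by powers of the kernel $I$ of the diagonal $A^{\ten\bt+1}\to A$ and hence extends to the completion $\hat A^{\ten\bt+1}$ and the filtration $(G*\tilde{F})$.

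Second, I would verify that the formula descends to the conormalisation $N\hat A^{\bt+1}$: if any $a_i=1$ with $i\ge 1$, then $(D_S^{\nabla}-\delta)(1\cdot x)=(D_S^{\nabla}-\delta)(x)$ combined with the derivation property of the iterated commutator expansion shows that the contribution factors through a degenerate tensor. For the chain map property, the cosimplicial differential on $N\hat A^{\bt+1}$ is the alternating sum of face maps insertions of the unit in $A^{\ten\bt+1}$; inserting an $a_i$ or $a_ia_{i+1}$ in the formula and expanding via the Leibniz-type rule for $(D_S^{\nabla}-\delta)$ against multiplication yields precisely the combinatorial identity that matches the internal differential $D_S^{\nabla}$ on $T_SQ\widehat{\Pol}$, where the subtraction of $\delta$ in each slot cancels against the internal $\delta$ on $A$ so that the total differential on $\DR'(A/R)$ is correctly transported.

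Third, for the multiplicative property with respect to the Alexander--Whitney cup product
\[
(a_0\ten\cdots\ten a_m)\smile(b_0\ten\cdots\ten b_n)=a_0\ten\cdots\ten a_{m-1}\ten(a_mb_0)\ten b_1\ten\cdots\ten b_n,
\]
the key observation is that the iterated application unfolds telescopically: the value on the left factor ends in $a_m$ acting by multiplication on $(D_S^{\nabla}-\delta)(b_0(\cdots(D_S^{\nabla}-\delta)(b_n)\cdots))$, which is exactly the value of $\mu$ on the right factor multiplied by $a_m$. Commutativity of $T_SQ\widehat{\Pol}$ makes this into strict multiplicativity, so $\mu(-,S)$ is a morphism of graded associative algebras, as claimed. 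The main obstacle in the whole argument is this compatibility with the cup product in the presence of the higher-order operators $D^{\nabla}_k$ and the conjugation by $e^S$: one must carefully track that higher brackets produced by the BV-commutators distribute correctly across the slot where $a_m$ and $b_0$ fuse, which is why the formula is written with $(D_S^{\nabla}-\delta)$ rather than $D_S^{\nabla}$ itself.

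Finally, $\nu(-,S,\rho)$ is defined by formal differentiation in the $\eps$-direction: expanding $D_{S+\eps\rho}^{\nabla}=D_S^{\nabla}+\eps[D_S^{\nabla},\rho]_1+O(\eps^2)$ and collecting linear terms in $\eps$ in the iterated formula gives an explicit expression which, being obtained by replacing one occurrence of $(D_S^{\nabla}-\delta)$ at a time with its $\eps$-derivative, is automatically an $R[\hbar]/\hbar^k$-linear derivation of the appropriate bidegree and filtration, with the chain map and filtration properties inherited by naturality from those of $\mu$.
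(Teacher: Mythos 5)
Your outline follows the same route as the paper (which delegates the combinatorics to \cite[Lemmas \ref{DQvanish-mulemma1} and \ref{DQvanish-keylemma}]{DQvanish}), but two of your steps do not hold up as written. The first is the multiplicativity claim. The Alexander--Whitney compatibility is telescopic only for the \emph{operator-valued} map $\mu'(a_0\ten\cdots\ten a_m)=a_0\circ(D^{\nabla}_S-\delta)\circ a_1\circ\cdots\circ(D^{\nabla}_S-\delta)\circ a_m$, taking values in a completed filtered ring $B$ of differential operators, where $\mu'(\omega\smile\eta)=\mu'(\omega)\circ\mu'(\eta)$ is indeed immediate. The map $\mu$ of the proposition is $\mu'(-)(1)$, and evaluation at $1$ is not an algebra map: $a_0(D^{\nabla}_S-\delta)\bigl(a_1b_0(D^{\nabla}_S-\delta)(b_1)\bigr)$ is not $a_0(D^{\nabla}_S-\delta)(a_1)\cdot b_0(D^{\nabla}_S-\delta)(b_1)$, because $D^{\nabla}_S-\delta$ is a differential operator of order $\ge 2$ rather than a derivation (and even for a derivation a cross term survives). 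The proposition only claims a filtered chain map of $R[\hbar]/\hbar^k$-modules, so you lose nothing by dropping the claim; but the detour through $B$, together with commutativity of $\gr_{\Fil}B$, is precisely what lets the paper control the filtration by checking it on the generators $A$ and $F^1(A\ten A)$. Working directly with $\mu$, as you do, you must verify the filtration bound on every $N^n$ by hand, which your sketch does not actually do.

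The second, more serious gap is your unsupported assertion that $D^{\nabla}_S-\delta$ sends $A$ into $\tilde{F}^1T_SQ\widehat{\Pol}(A,\nabla,-2)$: this is the crux of the proof. A priori the displayed formula only lands in $T_0Q\widehat{\Pol}(A,\nabla,-2)[\hbar^{-1}]$, and one must show that components in $F_p\DR^r(A,\nabla)$ occur with coefficient at least $\hbar^p$, i.e.\ that the image lies in $G^0$. This follows from writing $[D^{\nabla}_S,a]=\sum_n(-\ad_a)(-\ad_S)^n(D^{\nabla})/n!$ and observing that the $(n+1)$-fold commutator with multiplication operators annihilates every $D^{\nabla}_k$ of order $k\le n$, so only the components $D^{\nabla}_{\ge n+1}$, which carry an extra $\hbar^{\ge n}$ relative to the filtration shift, survive; this is exactly what compensates the powers of $\hbar$ in $S$. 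Your remark that the ``$j$-th iterated commutator lands in $\tilde{F}^j$'' addresses the Hodge filtration but not this $G$-adic (power-of-$\hbar$) bookkeeping, which is where the statement could actually fail for a careless definition. Relatedly, in the chain-map step you should make explicit that the correction term is $\nu'(\omega,S,\tfrac12(D^{\nabla}_S)^2)$ and vanishes only because $(D^{\nabla}_S)^2=0$; without that input the formula is not a chain map, and your appeal to a ``combinatorial identity'' hides the one structural hypothesis that makes it true.
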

\begin{proof}
We adapt the proof of \cite[Lemma \ref{DQvanish-QPolmudef}]{DQvanish}. It suffices to prove this for the limit over all $k$, as $S$ and $\rho$ always lift to $(G*\tilde{F})^2Q\widehat{\Pol}(A,\nabla,-2)$ and the maps are $R\brh$-linear. First, let %%$T= \DR^r(A,\nabla)((\hbar))$
$T= T_0Q\widehat{\Pol}(A,\nabla,-2)[\hbar^{-1}]$, with filtrations $\tilde{F}$ given by powers of $\hbar$  and $G^iT:= \hbar^i T_0Q\widehat{\Pol}(A,\nabla,-2)$. We then consider the convolution filtration  $G*\tilde{F}$, which is given explicitly by $(G*\tilde{F})^pT= \prod_k \hbar^k F_{2k-p}\DR^r(A,\nabla)$.

The filtration $G*\tilde{F}$ induces a filtration on the ring $\sD_{R\brh}(T)$  of graded $R\brh$-linear  differential operators on  $T$, which we also denote by $G*\tilde{F}$, given by saying that 
\[
 (G*\tilde{F})^i\EEnd_{R\brh}(T) =\{ f \in \EEnd_{R\brh}(T) ~:~ f((G*\tilde{F})^pT) \subset (G*\tilde{F})^{i+p}T ~\forall p\}.
\]
%$f \in (G*\tilde{F})^i\EEnd_{R\brh}(T)$ if $f((G*\tilde{F})^pT) \subset (G*\tilde{F})^{i+p}T$ for all $p$.
We then let $B$ be the completion of $\sD_{R\brh}(T)$ with respect to the filtration  $G*\tilde{F}$, and define  a filtration $\Fil$ on $\sD_{R\brh}(T)$ by first setting 
\[
\Fil^i(\sD/(G*\tilde{F})^j)=\sum_{r} (G*\tilde{F})^{r}(\sD_{\le 2r-i}/(G*\tilde{F})^j),
\]
 where $\sD_{\le r}$ denotes differential operators of order $\le r$, then by letting  $\Fil^i(B)=\Lim_j\Fil^i(\sD/(G*\tilde{F})^j) $. 

Now, we have $D^{\nabla}=\sum_{k \ge 1} D_k^{\nabla}$, where the operator $D_k^{\nabla}$ has order $\le k$, preserves $\tilde{F}$ and shifts the index of $G$ by $k-1$. Thus $D_k^{\nabla} \in (G*\tilde{F})^{k-1}\sD_{\le k}$, so $D_k^{\nabla} \in \Fil^0B$ for $k \ge 2$ %since $k \le 2(k-1)$
 and hence $D^{\nabla}-\delta  \in \Fil^0B$. 

Moreover, $\tilde{F}^pQ\widehat{\Pol}(A,\nabla,-2)=  \tilde{F}^{p-1}G^{-1}T $, so in particular $S \in  \tilde{F}^{1}G^{-1}T \subset (G*\tilde{F})^0T$. If we regard $S$ as a differential operator on $T$ of order $0$, this also gives $S \in \Fil^0B$. Since 
\[
 D^{\nabla}_S= e^{-S}D^{\nabla}e^S= e^{-S}(D^{\nabla}-\delta) e^S + (\delta + \delta S),
\]
it follows that $D^{\nabla}_S-\delta  \in \Fil^0B$.

Since the associated graded ring $\gr_{\Fil}^{\bt}B$ is commutative, we may therefore appeal to \cite[Lemma \ref{DQvanish-mulemma1}]{DQvanish}, which gives a filtered map 
$
\mu'(-,S) \co (\DR'(A/R),F) \to (B,\Fil),
$
from which we obtain our filtered morphisms
\begin{align*}
 \mu(-,S) \co (\DR'(A/R),F^{\bt}) &\to (T,G*\tilde{F}^{\bt})\\
 \nu(-, S, \rho) \co(\DR'(A/R),F^{\bt}) &\to (T[r], (G*\tilde{F})^{\bt+p})
\end{align*}
by evaluating the operators at $1$.

We next show that the images of these maps lie in  the submodule $G^0T= T_0Q\widehat{\Pol}(A, \nabla,-1)$ of $T$. Given $a \in A$, we may rewrite the expression for $[D^{\nabla}_S,a]$ as the sum of iterated commutators $\sum_n (-\ad_a)(-\ad_S)^n(D^{\nabla})/n!$, and hence as $\sum_n (-\ad_a)(-\ad_S)^n(D^{\nabla}_{\ge n+1})/n!$, the lower order operators being annihilated. Since $S \in G^{-1}B$ and $D^{\nabla}_k \in G^{k-1}B$, this means that $[D^{\nabla}_S,a] \in G^0B$, and hence  $[D^{\nabla}_S-\delta,a] \in G^0B$. Thus $\mu'(F^1(A\ten A), S)\subset G^0B$;
 since $F^1(A\ten A)$ topologically generates  $\DR'(A/R)$ under multiplication, $\mu'(\DR'(A/R), S)$ is thus contained in  the subalgebra $G^0B$ of $B$, and the result follows by evaluation at $1$. The statements  for $\nu$ are an immediate consequence.

Finally, to see that $\mu(-,S)$ is a chain map, we may appeal to \cite[Lemma \ref{DQvanish-keylemma}]{DQvanish}, which gives the expression
\begin{align*}
D^{\nabla}_S\mu'(\omega, S) = \mu'( (d +\delta)\omega, S) + \nu'(\omega, S, \half (D^{\nabla}_S)^2),
\end{align*}
the final term vanishing because $D^{\nabla}_S$ is square-zero. That $\mu$ is a chain map then follows by evaluation at $1 \in T$.
\end{proof}
% 
% \begin{lemma}\label{keylemma}
% Take a complete    filtered   $R$-DGAA $(B,\Fil^{\bt})$ and a  morphism $\phi \co A \to \Fil^0B$ of  $R$-DGAAs; assume that the left and right $A$-module structures on $\gr_{\Fil}B$ agree. Then for any  $\Delta \in\Fil^0B^1$,  and  any $\omega \in \DR'(A/R)$, we have
% \begin{align*}
% [\Delta,\mu(\omega, \Delta)] = \mu(d\omega, \Delta) + \half \nu(\omega, \Delta, [\Delta,\Delta]),\\
% \delta_{\Delta}\mu(\omega, \Delta) = \mu(D\omega, \Delta) + \nu(\omega, \Delta,\kappa(\Delta)),
% \end{align*}
% where $\delta$ are the structural differentials on $A,B$, with   $\delta_{\Delta}:= \delta + [\Delta,-]$,  $D:= d \pm \delta$  the total differential on $\DR'(A/R)$, and $\kappa(\Delta):= [\delta, \Delta] + \half[\Delta,\Delta]$.
% \end{lemma}

\begin{definition}
We say that a generalised  $(-2)$-shifted  pre-symplectic structure $\omega$ and an $E_{-1}$ quantisation $S$ of a 
 flat right connection $(A, \nabla)$
  are  compatible (or a compatible pair) if 
\[
 [\mu(\omega, S)] = [-\pd_{\hbar^{-1}}(S)] \in  \H^0((G*\tilde{F})^2T_{S}Q\widehat{\Pol}(A, \nabla,-2)),
\]
where $\sigma=-\pd_{\hbar^{-1}}$ is the canonical tangent vector of Definition \ref{Qsigmadef}. %%note that comparison with {poisson} means evaluation at $-1$.
\end{definition}

\begin{definition}\label{vanishingdef}
Given a simplicial set $Z$, an abelian group object $A$ in simplicial sets over $Z$,  a space $X$ over $Z$ and a morphism  $s \co X \to A$ over $Z$, define the homotopy vanishing locus of $s$ over $Z$ to be the homotopy limit of the diagram
\[
\xymatrix@1{ X \ar@<0.5ex>[r]^-{s}  \ar@<-0.5ex>[r]_-{0} & A \ar[r] & Z}.
\]
\end{definition}

\begin{definition}\label{Qcompdef}
Define the space $Q\Comp(A, \nabla,-2)$ of compatible quantised $(-2)$-shifted pairs to be the homotopy vanishing locus of  
\[
 (\mu - \sigma) \co G\PreSp(A/R,-2) \by Q\cP(A,\nabla,-2) \to TQ^{tw}\cP(A,\nabla,-2)
\]
over $Q^{tw}\cP(A, \nabla,-2)$.  Note that there is no twist on the left, but that $\mu$ forces us to have twists on the right.

We define a cofiltration on this space by setting $ Q\Comp(A,\nabla,-2)/G^k$ to be the homotopy vanishing locus of  
\[
 (\mu - \sigma) \co (G\PreSp(A/R,-2)/\hbar^k)  \by (Q\cP(A,\nabla,-2)/G^k)  \to TQ^{tw}\cP(A,\nabla,-2)/G^k 
\]
over $Q^{tw}\cP(A,\nabla,-2)/G^k $.
\end{definition}

When $k=1$, note that this recovers the notion of compatible $(-2)$-shifted pairs from \cite{poisson}, because as in the proof of Proposition \ref{QPolmudef}, we have 
\begin{align*}
 \mu'(da,S)&= -\delta a + \sum_{n\ge 0}     [[\ldots [D^{\nabla}, \overbrace{S], \ldots ,S}^n],a]/n!\\
&\equiv   \sum_{n\ge 1}     [[\ldots [D^{\nabla}_{n+1}, S], \ldots ,S],a]/n! \mod G^1,\\
&= \sum_{n\ge 1}  [S,S, \ldots,S,a]_{\nabla,n+1}/n!
%&= [S,a]_{\nabla,2},
\end{align*}
which by Lemma \ref{DRrBVlemma} is just the Schouten--Nijenhuis bracket $[S,a]_{\nabla,2}$, the higher brackets vanishing.
%%since this is a differential operator of order $0$, it is determined by its image $\mu(da,S) =\mu'(da,S)(1)$ in $T_S\widehat{\Pol}(A,-2)$, with $\mu$ moreover becoming multiplicative

\begin{definition}
 Define $Q\Comp(A,\nabla,-2)^{\nondeg} \subset Q\Comp(A,\nabla,-2)$ to consist of compatible quantised pairs $(\omega, \Delta)$ with $\Delta$ non-degenerate. This is a union of path-components, and by \cite[Lemma \ref{poisson-compatnondeg}]{poisson} has a natural map 
\[
 Q\Comp(A,\nabla,-2)^{\nondeg}\to G\Sp(A/R,-1)
\]
as well as the canonical map
\[
 Q\Comp(A,\nabla,-2)^{\nondeg} \to Q\cP(A,\nabla,-2)^{\nondeg}.
\]
\end{definition}

\begin{proposition}\label{QcompatP1} 
For any flat right connection $(A, \nabla)$, the canonical map
\begin{eqnarray*}
    Q\Comp(A,\nabla,-2)^{\nondeg} \to  Q\cP(A,\nabla,-2)^{\nondeg}           
\end{eqnarray*}
 is a weak equivalence. In particular, there is a morphism
\[
  Q\cP(A,\nabla,-2)^{\nondeg} \to G\Sp(A/R,-2)
\]
in the homotopy category of simplicial sets.
\end{proposition}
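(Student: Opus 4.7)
My plan is to establish the weak equivalence by analysing the fibres of the projection via the cofiltration by $G^k$ and inducting on $k$, closely following the strategy used for the $n=-1$ case in \cite{DQvanish}. First I would unpack Definition \ref{Qcompdef}: at each level $k$, the space $Q\Comp(A,\nabla,-2)/G^k$ sits over $Q\cP(A,\nabla,-2)/G^k$ as the homotopy vanishing locus of the affine-linear map $(\omega,S)\mapsto \mu(\omega,S)-\sigma(S)$, whose target is abelian over $Q^{tw}\cP/G^k$. Consequently the homotopy fibre over any non-degenerate $S$ is the homotopy vanishing locus of the single map $\omega\mapsto \mu(\omega,S)-\sigma(S)$ from $G\PreSp(A/R,-2)/\hbar^k$ into the (simplicial abelian) fibre of $TQ^{tw}\cP/G^k$ over $S$. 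This fibre is contractible precisely when $\mu(-,S)$ induces a weak equivalence onto its abelian target, so the whole proposition reduces (modulo each $G^k$) to verifying this weak-equivalence claim for every non-degenerate quantisation $S$.

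For the inductive step I would compare associated gradeds with respect to $G$. The base case $k=1$ reduces, as in the discussion immediately following Definition \ref{Qcompdef}, to the classical compatibility between non-degenerate $(-2)$-shifted Poisson and pre-symplectic structures recorded in \cite{poisson}. At higher $k$, the associated-graded piece $\gr_G^k$ of the quantised tangent space $\tilde{F}^2T_SQ\widehat{\Pol}(A,\nabla,-2)$ identifies, via the maps recalled in \S\ref{centresn}, with $\hbar^kF^{2-k}T_{\pi_S}\widehat{\Pol}(A,-2)$, while $\gr_G^k$ of $(G*\tilde{F})^2\DR'(A/R)\llbracket\hbar\rrbracket$ is a shift of $F^{2-2k}\DR'(A/R)$ by $\hbar^k$. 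By the leading-order analysis already built into the proof of Proposition \ref{QPolmudef}, the induced map on these associated gradeds becomes, modulo still higher-order terms in $\hbar$, contraction with the underlying bivector of $\pi_S$; non-degeneracy of $S$ in the sense of Definition \ref{Qnonnegdef} forces $\pi_S$ to be non-degenerate, so this contraction is a quasi-isomorphism and the inductive step closes.

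Finally I would pass to the limit over $k$ using a Milnor $\holim$ argument: Definitions \ref{Qpoissdef} and \ref{Qcompdef} and the analogous formula $G\PreSp(A/R,-2)=\Lim_kG\PreSp(A/R,-2)/\hbar^k$ present all relevant spaces as homotopy limits along the tower, and the levelwise weak equivalences then assemble to give the desired $Q\Comp(A,\nabla,-2)^{\nondeg}\xrightarrow{\sim}Q\cP(A,\nabla,-2)^{\nondeg}$. The morphism $Q\cP(A,\nabla,-2)^{\nondeg}\to G\Sp(A/R,-2)$ in the homotopy category follows formally by inverting this equivalence and postcomposing with the natural map $Q\Comp(A,\nabla,-2)^{\nondeg}\to G\Sp(A/R,-2)$ defined just before the proposition.

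The main obstacle I anticipate is bookkeeping in the inductive step: making precise the identifications of associated gradeds under the convolution filtration $G*\tilde{F}$ and tracking how $\mu(-,S)$ and $\sigma(S)$ interact with both the Hodge filtration on $\DR'$ and the $\hbar$-filtration on $Q\widehat{\Pol}$. In particular, I would need to verify that the higher-order corrections arising from the BCH-type expansion of $\mu(-,S)$ implicit in the proof of Proposition \ref{QPolmudef} do not disturb the reduction to the classical Schouten--Nijenhuis contraction on associated gradeds, since it is only this leading-order identification that powers the induction.
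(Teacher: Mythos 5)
Your proposal is correct and follows essentially the same route as the paper: both reduce to showing that the homotopy fibre over a non-degenerate $S$ is the homotopy fibre of $\mu(-,S)$ over $\sigma(S)$, and both establish that $\mu(-,S)$ is a weak equivalence by checking that non-degeneracy makes it a quasi-isomorphism on the associated gradeds of the $G$- and $(G*\tilde F)$-filtrations, where it becomes contraction with the non-degenerate bivector. The paper phrases this as a single bifiltered quasi-isomorphism rather than an explicit induction on $k$ followed by a $\holim$ argument, but the content is the same.
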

\begin{proof}
We adapt the proof of \cite[Proposition \ref{poisson-compatP1}]{poisson} and \cite[Proposition \ref{DQvanish-QcompatP1}]{DQvanish}.
For any $S \in Q\cP(A,\nabla,-2)$, the homotopy fibre of $Q\Comp(A/R,-2)^{\nondeg} $ over $S$ is just the homotopy fibre of
\[
\mu(-,S)  \co G\PreSp(A/R,-2)  \to T_{S}Q^{tw}\cP(A,\nabla,-2) 
\]
over $-\pd_{\hbar^{-1}}(S) \in Q^{tw}\cP(A,\nabla,-2)$.

The map $\mu(-,S) \co \DR'(A/R)\llbracket\hbar\rrbracket \to T_{S}Q\widehat{\Pol}(A,\nabla,-2)$ is a morphism of complete $(G*\tilde{F})$-filtered $R\llbracket\hbar\rrbracket$-modules. Since the morphism is $R\llbracket\hbar\rrbracket$-linear, it maps $G^k(G*\tilde{F})^p\DR'(A/R)\llbracket\hbar\rrbracket$ to $   G^k(G*\tilde{F})^pT_{S}Q\widehat{\Pol}(A,\nabla,-2)$. Non-degeneracy of $S_2$ modulo $F_1$ implies that $\mu(-,S)$ induces  quasi-isomorphisms %$S_2 \in \hbar T^2[2]$
\[
  \hbar^{k}\Omega^{p-2k}[2k-p] \to \hbar^{p-k}\HHom_A(\Omega^{p-2k}_{A/R}, A)[p-2k]
\]
on the associated gradeds $\gr_G^k\gr_{(G*\tilde{F})}^p$.  We therefore have a quasi-isomorphism of bifiltered complexes, so we have isomorphisms on homotopy groups:
\begin{eqnarray*}
 \pi_jG\PreSp(A/R,-2)  &\to& \pi_jT_{S}Q^{tw}\cP(A,\nabla,-2)\\
 \H^{-j}((G*\tilde{F})^2 \DR(A/R)\llbracket\hbar\rrbracket) &\to&  \H^{-j}((G*\tilde{F})^2T_{S}Q\widehat{\Pol}(A,\nabla,-2)).\qedhere
\end{eqnarray*}
\end{proof}

\subsection{Comparing quantisations and generalised symplectic structures}\label{comparisonsn}

\begin{definition}\label{Ndef}
Given a compatible pair  $(\omega, \pi) \in  \Comp(A,-2)= Q\Comp(A,\nabla,-2)/G^1$, and $k \ge 0$, define the complex 
$
 N(\omega,\pi,k) 
$
to be the cocone of the map %%DGLA grading is what this corresponds to
\begin{align*}
 \gr_G^k(G*\tilde{F})^2(\DR'(A/R)\llbracket\hbar\rrbracket\oplus \gr_G^k\tilde{F}^2Q\widehat{\Pol}(A,\nabla,-2) 
 \to \gr_G^k(G*\tilde{F})^2T_{\pi}Q\widehat{\Pol}(A,\nabla,-2)
\end{align*}
given by combining                                
\begin{align*}
 \gr_G^k  \mu(-,\pi) \co  \gr_G^k(G*\tilde{F})^2\DR'(A/R)\llbracket\hbar\rrbracket &\to  \gr_G^k(G*\tilde{F})^2T_{\pi}Q\widehat{\Pol}(A,\nabla,-2) \\ %%RHS is $\hbar^kF^{2-2k}$
%F^2\DR(A/R) &\to\prod_{i \ge 2} \HHom_A(\CoS_A^{i}(\Omega^1_{A/R}),A)\hbar^{i}\quad \text{ if }k=0\\
%\DR(A/R)\hbar^{k} &\to\prod_{i \ge 0} \HHom_A(\CoS_A^{i}(\Omega^1_{A/R}),A)\hbar^{i+k}\quad \text{ if }k>0
\hbar^{k}F^{2-2k}\DR(A/R) &\to\prod_{i \ge (2-2k),0} \hbar^{i+k}\HHom_A(\Omega^i_{A/R},A)[i]
\end{align*}
with the maps 
\begin{align*}
 \gr_G^k\nu(\omega, \pi) + \pd_{\hbar^{-1}} \co (\gr_G^k\tilde{F}^2Q\widehat{\Pol}(A,\nabla,-2), \delta_{\pi}) &\to \gr_G^k(G*\tilde{F})^2T_{\pi}Q\widehat{\Pol}(A,\nabla,-2)\\
% \prod_{i \ge 2}\HHom_A(\CoS_A^{i}(\Omega^1_{A/R}),A)\hbar^{i+1}  &\to  \prod_{i \ge 2} \HHom_A(\CoS_A^{i}(\Omega^1_{A/R}),A)\hbar^{i} \quad \text{ if }k=0,\\
% \prod_{i \ge 1}\HHom_A(\CoS_A^{i}(\Omega^1_{A/R}),A)\hbar^{i}  &\to  \prod_{i \ge 0} \HHom_A(\CoS_A^{i}(\Omega^1_{A/R}),A)\hbar^{i+1} \quad \text{ if }k=1,\\
% \prod_{i \ge 0}\HHom_A(\CoS_A^{i}(\Omega^1_{A/R}),A)\hbar^{i+k-1}  &\to  \prod_{i \ge 0} \HHom_A(\CoS_A^{i}(\Omega^1_{A/R}),A)\hbar^{i+k}\quad \text{ if }k>1,
\prod_{i \ge (2-k),0}\hbar^{i+k-1} \HHom_A(\Omega^i_{A/R},A)[i] &\to  \prod_{i \ge (2-2k),0} \hbar^{i+k}\HHom_A(\Omega^i_{A/R},A)[i],
\end{align*}
 where                              
\[
 \nu(\omega, \pi)(b):= \nu(\omega, \pi, b).
\]
\end{definition}

It follows from the proof of  Proposition \ref{QcompatP1} that  the maps $\gr_G^k  \mu(-,\pi)$ are all $F$-filtered quasi-isomorphisms when $\pi$ is non-degenerate, so the projection maps $N(\omega,\pi,k) \to \gr_G^k\tilde{F}^2Q\widehat{\Pol}(A,\nabla,-2)$ are also quasi-isomorphisms. The behaviour of the other projection is more subtle for low $k$, but it behaves well thereafter, the proof of \cite[Lemma \ref{DQvanish-tangentlemma}]{DQvanish} adapting verbatim to give:

%% $F$ is OK, since looking at $\gr_G$.
\begin{lemma}\label{tangentlemma}
The projection maps 
\[
 N(\omega,\pi,k) \to \hbar^{k}\DR(A/R)
\]
 are $F$-filtered quasi-isomorphisms for all $k \ge 2$.
\end{lemma}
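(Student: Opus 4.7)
The plan is to reduce, via the fibre sequence defining $N(\omega,\pi,k)$, to showing that a single specific map is an $F$-filtered quasi-isomorphism, then to check this on $F$-associated graded pieces using scalar arithmetic.

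Recall from Definition \ref{Ndef} that $N(\omega,\pi,k)$ is the cocone of
\[
\gr_G^k\mu(-,\pi) \oplus (\gr_G^k\nu(\omega,\pi)+\pd_{\hbar^{-1}}) \co \gr_G^k(G*\tilde{F})^2\DR'(A/R)\brh \oplus \gr_G^k\tilde{F}^2Q\widehat{\Pol} \to \gr_G^k(G*\tilde{F})^2T_{\pi}Q\widehat{\Pol}.
\]
The paragraph preceding the lemma shows $\gr_G^k\mu(-,\pi)$ is an $F$-filtered quasi-isomorphism under non-degeneracy of $\pi$. By the general fibre-sequence calculus, the projection $N(\omega,\pi,k)\to\hbar^k\DR(A/R)$ is then an $F$-filtered quasi-isomorphism if and only if the other summand
\[
g_k := \gr_G^k\nu(\omega,\pi)+\pd_{\hbar^{-1}} \co \gr_G^k\tilde{F}^2Q\widehat{\Pol}(A,\nabla,-2) \to \gr_G^k(G*\tilde{F})^2T_{\pi}Q\widehat{\Pol}(A,\nabla,-2)
\]
is. The hypothesis $k\ge 2$ enters here: by the explicit formulas in Definition \ref{Ndef}, only when $k\ge 2$ do source and target both have summand range $i\ge 0$ (for $k=1$ the source starts at $i=1$, so the $i=0$ component of the target is genuinely covered by $\omega$ and cannot be reached by $g_1$).

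On the $i$-th summand $\HHom_A(\Omega^i_{A/R},A)[i]$, the canonical tangent vector $\pd_{\hbar^{-1}}$ acts as multiplication by the scalar $(i+k-1)$, shifting $\hbar^{i+k-1}$ to $\hbar^{i+k}$; for $k\ge 2$ and $i\ge 0$ this scalar is a nonzero element of $\Q$, so $\pd_{\hbar^{-1}}$ alone is a termwise isomorphism of the underlying graded $A$-modules. One then argues that $\gr_G^k\nu(\omega,\pi)$ strictly lowers the form-degree grading: by Proposition \ref{QPolmudef} and the $(G*\tilde{F})$-filtration behaviour of $\mu$ established in its proof, $\nu(\omega,-)$ is a filtered derivation, and after passing to $\gr_G^k$ each $\nu$-contribution has strictly smaller form-degree than the corresponding $\pd_{\hbar^{-1}}$-contribution, by the same combinatorial analysis as in \cite[Lemma \ref{DQvanish-tangentlemma}]{DQvanish}.

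Consequently on each $F$-associated graded $\gr_F^p g_k$ only the $\pd_{\hbar^{-1}}$-term contributes, acting as multiplication by $(i+k-1)\ne 0$, and is therefore an isomorphism; completeness of the filtrations then promotes this to the required $F$-filtered quasi-isomorphism, giving the lemma. The main obstacle is the careful bookkeeping that identifies the precise form-degree drop of $\gr_G^k\nu(\omega,\pi)$ relative to $\pd_{\hbar^{-1}}$, and it is exactly here that the bound $k\ge 2$ is needed to ensure the scalar $(i+k-1)$ is invertible across the full range $i\ge 0$; this is the step where the adaptation of \cite[Lemma \ref{DQvanish-tangentlemma}]{DQvanish} does the real work.
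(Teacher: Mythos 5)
Your reduction is fine as far as it goes: the projection $N(\omega,\pi,k)\to\hbar^k\DR(A/R)$ is an $F$-filtered quasi-isomorphism precisely when $g_k=\gr_G^k\nu(\omega,\pi)+\pd_{\hbar^{-1}}$ is, and $\pd_{\hbar^{-1}}$ does indeed send the factor $\hbar^{i+k-1}\HHom_A(\Omega^i_{A/R},A)[i]$ to the corresponding factor of the target by multiplication by $\mp(i+k-1)$. (The paper offers no argument of its own here, deferring verbatim to the proof in \cite{DQvanish}, so the comparison is with what that proof must contain.)

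The gap is the assertion that $\gr_G^k\nu(\omega,\pi)$ strictly lowers the polyvector degree, so that only $\pd_{\hbar^{-1}}$ survives on $\gr_F$. This is false. Since $\nu(-,\pi,b)$ is a derivation in its first argument with leading term $a\,df\mapsto \pm a\,(b\lrcorner df)$, the contribution of the $2$-form component $\omega_{0,2}$ of $\omega$ to $\nu(\omega,\pi,b)$, for $b$ an $i$-vector, is a sum of terms $(b\lrcorner da)(\pi_2\lrcorner da')$, i.e.\ a product of an $(i-1)$-vector with a $1$-vector: it \emph{preserves} the polyvector degree and survives to $\gr_F$. In Darboux form ($\omega_2=\sum_j dx_j\,d\xi_j$, $\pi_2=\sum_j\pd_{x_j}\pd_{\xi_j}$) this operator is multiplication by $\pm i$ on $i$-vectors, so $\gr_F g_k$ is multiplication by $\pm(k-1)$, not by $\pm(i+k-1)$; in the general compatible case one gets $D_\phi-(i+k-1)$ for $D_\phi$ the derivation extending the (by compatibility, idempotent up to homotopy) endomorphism $\phi=\pi_2^{\sharp}\circ\omega_2^{\sharp}$, whose eigenvalues $j-(i+k-1)$, $0\le j\le i$, are all nonzero exactly when $k\ge 2$. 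So the term you identified and the term you dropped are of the same size and nearly cancel; the critical eigenvalue whose nonvanishing forces $k\ge2$ is $-(k-1)$, attained at $j=i$. Your version of the associated-graded map would make $g_1$ invertible on every factor with $i\ge1$ (leaving only the $i=0$ range discrepancy), whereas in the non-degenerate case $g_1$ actually vanishes identically on $\gr_F$. Tracking the $\nu(\omega_{0,2},\pi_2,-)$ contribution and using compatibility of $(\omega,\pi)$ to control it is the real content of the lemma, and it is exactly the step your appeal to ``the same combinatorial analysis'' elides.
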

% \begin{proof}
%  This amounts to showing that the map 
% \[
%  \gr_G^k\nu(\omega, \pi) + \pd_{\hbar^{-1}}
% \]
% is a filtered quasi-isomorphism. 
%  It suffices to show that the associated maps
%  \begin{align*}
%  \gr_F^p\gr_G^k\nu(\omega, \pi) + \pd_{\hbar^{-1}} \co \gr_F^p\gr_G^k(G*\tilde{F})^2Q\widehat{\Pol}(A,\nabla,-2) &\to \gr_F^p\gr_G^k(G*\tilde{F})^2T_{\pi}Q\widehat{\Pol}(A,\nabla,-2)\\
% \HHom_A(\Omega^p_{A/R},A)[p]\hbar^{p+k-1}  &\to  \HHom_A(\Omega^p_{A/R},A)[p]\hbar^{p+k},
% \end{align*}
% are quasi-isomorphisms for all $p \ge 0$.
% 
% Reasoning as in \cite[Lemma \ref{poisson-nondegtangent}]{poisson}, $\gr_{F}\gr_G\nu(\omega, \pi)$ is an $R\llbracket\hbar\rrbracket$-linear  derivation on $\gr_{F}\gr_GQ\widehat{\Pol}(A,\nabla,-2) \cong \widehat{\Pol}(A,-1)\llbracket\hbar\rrbracket$ with respect to the commutative multiplication. It is given on generators $\HHom_A(\Omega^1_{A/R},A)$ by $\hbar\pi^{\sharp} \circ \omega^{\sharp}$. Compatibility of  $\omega$ and $\pi$ implies that $\pi^{\sharp} \circ \omega^{\sharp}$ is homotopy idempotent  by \cite[Example \ref{poisson-compatex1}]{poisson}. Thus $ \hbar^{-1}\gr_{F}^p\gr_G^k\nu(\omega, \pi)$ is homotopy diagonalisable, with integral eigenvalues in the interval $[0, p]$.
% 
% On the other hand, $\pd_{\hbar^{-1}}$ coincides on $\gr_{F}^p\gr_G^k $ with multiplication by $(1-p-k)\hbar $, so the eigenvalues of  $ \hbar^{-1}\gr_{F}^p\gr_G^k\nu(\omega, \pi) + \hbar^{-1}\pd_{\hbar^{-1}}$ lie in $[1-p-k, 1-k] $, giving a  quasi-isomorphism when $k > 1$.
%\end{proof}

\subsubsection{The comparison}

\begin{proposition}\label{quantprop}
The   maps
\begin{align*}
 Q\cP(A,\nabla,-2)^{\nondeg}/G^k &\to (Q\cP(A,\nabla,-2)^{\nondeg}/G^2)\by^h_{(G\Sp(A,-2)/G^2)}(G\Sp(A,-2)/G^k) \\ 
&\simeq (Q\cP(A,\nabla,-2)^{\nondeg}/G^2)\by \prod_{i=2}^{k-1} \mmc(\hbar^i\DR(A/R)[-1])
\end{align*}
coming from Proposition \ref{QcompatP1}  are weak equivalences for all $k \ge 2$.
\end{proposition}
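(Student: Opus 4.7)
The plan is to leverage Proposition \ref{QcompatP1} to replace $Q\cP^{\nondeg}(A,\nabla,-2)/G^k$ throughout with the weakly equivalent space $Q\Comp(A,\nabla,-2)^{\nondeg}/G^k$ of compatible pairs, and then establish the claimed equivalence by induction on $k$. Non-degeneracy depends only on the $G^2$-truncation (the leading bivector $S_2$), so is preserved at every stage of the induction and may be treated as a union of path-components. The base case $k=2$ is tautological, since both sides agree.

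For the inductive step, I would compute the homotopy fibre of the projection
\[
 Q\Comp(A,\nabla,-2)^{\nondeg}/G^{k+1} \to Q\Comp(A,\nabla,-2)^{\nondeg}/G^k
\]
over a compatible pair $(\omega,S)$ with bivector $\pi_{S}$. Because the filtration $G$ is central for the $L_{\infty}$ operations (iterated brackets strictly shift $G$-weight), the fibre is controlled by an abelian deformation complex built from the $k$th graded piece $\gr_G^k$. Unwinding the definitions of $Q\Comp$ and the derivation $\nu$ from Proposition \ref{QPolmudef}, this deformation complex is precisely the cocone $N(\omega,\pi_S,k)$ of Definition \ref{Ndef}: deformations in the Poisson factor and the pre-symplectic factor are corrected by $\nu$ and $\mu$ respectively, with $\sigma=-\partial_{\hbar^{-1}}$ adjusting the base-point via the canonical tangent vector. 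Hence the fibre is $\mmc(N(\omega,\pi_S,k)[-1])$.

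Lemma \ref{tangentlemma} then does the real work: for $k \ge 2$, the projection $N(\omega,\pi_S,k) \to \hbar^k \DR(A/R)$ is an $F$-filtered quasi-isomorphism, so the fibre above collapses to $\mmc(\hbar^k \DR(A/R)[-1])$, which is exactly the fibre of $(G\Sp(A,-2)/G^{k+1}) \to (G\Sp(A,-2)/G^k)$. Comparing towers, this identifies $Q\Comp^{\nondeg}/G^{k+1}$ with the homotopy pullback $(Q\Comp^{\nondeg}/G^k)\times^h_{(G\Sp/G^k)}(G\Sp/G^{k+1})$, and iterating gives the first weak equivalence. The second follows from the fact that for $i \ge 1$ we have $\gr_G^i (G*\tilde F)^2 \DR'(A/R)\llbracket\hbar\rrbracket = \hbar^i F^{2-2i}\DR'(A/R) \simeq \hbar^i \DR(A/R)$, so that the fibration $G\Sp/G^k \to G\Sp/G^2$ is principal for the abelian simplicial group $\prod_{i=2}^{k-1}\mmc(\hbar^i\DR(A/R)[-1])$ and therefore splits as a product after base change along any section, in particular the one provided by a chosen compatible pair.

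The main obstacle is the identification of the fibre's deformation complex with $N(\omega,\pi_S,k)$: this requires tracking how the homotopy-vanishing locus defining $Q\Comp$ interacts with the centrality properties of $G$ on both $Q\widehat{\Pol}$ and $\DR'\llbracket\hbar\rrbracket$, and verifying that the maps $\mu$ and $\nu$ of Proposition \ref{QPolmudef} assemble precisely into the cocone presentation of Definition \ref{Ndef} modulo higher $G$-corrections that vanish on the graded piece. Once that is in place, Lemma \ref{tangentlemma} together with a standard obstruction-theoretic induction along the $G$-tower completes the argument.
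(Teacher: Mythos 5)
Your proposal is correct and follows essentially the same route as the paper: reduce to compatible pairs via Proposition \ref{QcompatP1}, then induct along the $G$-tower, using the identification of the graded deformation complex of $Q\Comp/G^{k+1}\to Q\Comp/G^k$ with the cocone $N(\omega,\pi,k)$ of Definition \ref{Ndef} and invoking Lemma \ref{tangentlemma} to match it with $\hbar^k\DR(A/R)$, i.e.\ with the corresponding graded piece of $G\PreSp$. The paper's proof is exactly this comparison of homotopy fibre sequences over a fixed $(\omega,\pi)\in\Comp(A,-2)$, so your argument (including the splitting of the $G\Sp$ tower giving the product decomposition) matches it.
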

\begin{proof}
Proposition \ref{QcompatP1} gives equivalences between $Q\cP^{\nondeg}$ and $Q\Comp^{\nondeg}$.  Fix $(\omega, \pi) \in \Comp(A,-2)$ and denote homotopy fibres by subscripts.
Arguing as in  the proof of \cite[Proposition \ref{DQvanish-quantprop}]{DQvanish}, Lemma \ref{tangentlemma} shows that for $k \ge 2$, the right-hand map is a weak equivalence in the commutative diagram
\[
\begin{CD}
 (Q\Comp(A,\nabla,-2)/G^{k+1})_{(\omega, \pi)} @>>> (Q\Comp(A,\nabla,-2)/G^k)_{(\omega,\pi)} @>>> \mmc(N(\omega,\pi,k))\\
@VVV @VVV @VVV \\
(G\PreSp(A,-2)/G^{k+1})_{\omega}@>>> (G\PreSp(A,-2)/G^{k})_{\omega} @>>> \mmc(\hbar^{k}F^{2-2k}\DR(A/R))
\end{CD}
\]
of homotopy  fibre sequences, so
\[
 (Q\Comp(A,\nabla,-2)/G^k)\by^h_{G\PreSp(A,-2)/G^{k} }G\PreSp(A,-2)/G^{k+1},
\]
and the result follows by induction.
\end{proof}

\begin{remark}\label{quantrmk}
 Taking the limit over all $k$, Proposition \ref{quantprop}  gives an equivalence
\[
  Q\cP(A,\nabla,-2)^{\nondeg} \simeq (Q\cP(A,\nabla,-2)^{\nondeg}/G^2)\by \prod_{i \ge 2} \mmc(\hbar^i\DR(A/R)[-1]);
\]
in particular, this means that there is a canonical map 
\[
 (Q\cP(A,\nabla,-2)^{\nondeg}/G^2) \to Q\cP(A,\nabla,-2)^{\nondeg},
\]
corresponding to the distinguished point $0 \in \mmc( \hbar^2\DR(A/R)[-2]\llbracket\hbar\rrbracket)$.

Thus to quantise a non-degenerate $(-2)$-shifted Poisson structure $\pi_{\hbar} =\sum_{j \ge 2} \hbar^{j-1}\pi_j$ (or equivalently, by \cite[Corollary \ref{poisson-compatcor2}]{poisson}, a $(-2)$-shifted symplectic  structure), it suffices to lift the power series $\pi_{\hbar}$ to a Maurer--Cartan element of the $L_{\infty}$-algebra $\prod_{j \ge 2} \hbar^{j-1}(F_j\DR^r(A,\nabla)/F_{j-2})$.

Even in the degenerate case, the proof of Proposition \ref{quantprop} gives a sufficient first-order criterion  for quantisations to exist:
\[
 Q\Comp(A,\nabla,-2) \simeq (Q\Comp(A,\nabla,-2)/G^2)\by \prod_{i \ge 2} \mmc(\hbar^i\DR(A/R)[-1]).
\]
\end{remark}

For the $(-1)$-shifted and $0$-shifted quantisations considered in \cite{DQvanish, DQnonneg}, there was a notion of self-duality for quantisations, and restricting to these allowed us to show that even the first-order obstruction to quantising non-degenerate Poisson structures vanishes. The following lemma and example show that the same is not true for our notion of   $(-2)$-shifted quantisations, and that we need an additional condition on compatibility of the Poisson structure and the connection.

\begin{lemma}\label{liftobslemma} 
 Given a $(-2)$-shifted Poisson structure $\pi_{\hbar} =\sum_{j \ge 2} \hbar^{j-1}\pi_j$, the obstruction $\ob_{\nabla}(\pi_{\hbar})$ to lifting $\pi_{\hbar}$ to $Q\cP(A,\nabla,-2)/G^2$ (resp. $Q^{tw}\cP(A,\nabla,-2)/G^2$) 
is given by the class
\[
[\sum_{n \ge 1} [\pi_{\hbar}, \ldots, \pi_{\hbar}]_{\nabla_{n+1},n}/n!] 
\]
in $\H^1(F^1T_{\pi} \widehat{\Pol}(A/R,-2))$ (resp. $\H^1(T_{\pi} \widehat{\Pol}(A/R,-2))$).
\end{lemma}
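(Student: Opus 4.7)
The approach is standard $L_\infty$ obstruction theory applied to the filtration $G$. Writing $L = \tilde{F}^2 Q\widehat{\Pol}(A,\nabla,-2)[-1]/G^2$ for the $L_\infty$-algebra in question, we have a short exact sequence $\gr_G^1 L \to L \to L/G^1$. For any lift $\tilde{\pi} \in L^1$ of $\pi_\hbar \in \mc(L/G^1)$, the Maurer--Cartan curvature $\mathrm{Curv}(\tilde{\pi}) := \sum_n [\tilde{\pi},\ldots,\tilde{\pi}]_n/n!$ vanishes modulo $G^1$ (by the Poisson Maurer--Cartan equation for $\pi_\hbar$), so its reduction lies in $(\gr_G^1 L)^1$, and is a cocycle for the twisted differential $\delta_\pi$ whose cohomology class is independent of choices.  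Since $\pi_\hbar$ naturally lifts to $\tilde{\pi} = \pi_\hbar \in \tilde{F}^2 Q\widehat{\Pol}$ itself (via $\sum_{j \ge 2} \hbar^{j-1}\pi_j$), we simply compute $\mathrm{Curv}(\pi_\hbar) \bmod G^2$.

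The heart of the argument is a $G$-weight accounting. Using that $Q\widehat{\Pol} = \hbar^{-1}(\hbar Q\widehat{\Pol})$, with $\hbar Q\widehat{\Pol}$ being the actual $BV_\infty$-algebra, multiplication on $Q\widehat{\Pol}$ shifts $G$ down by $1$, i.e.\ $G^i \cdot G^j \subset G^{i+j-1}$; thus an $n$-fold product of elements in $G^0$ lies in $G^{1-n}$. On the other hand, $D^\nabla_k$ shifts $G$ up by $k-1$ (as $D^\nabla_k$ decreases the $F$-index on $\DR^r$ by $k-1$ while preserving the $\hbar$-power). Expanding
\[
 \mathrm{Curv}(\pi_\hbar) = \sum_{n \ge 1}\sum_{k \ge n} \frac{1}{n!}[\pi_\hbar,\ldots,\pi_\hbar]_{\nabla_k,n},
\]
each summand therefore lies in $G^{k-n}$. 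The pieces with $k<n$ vanish by the order count ($D^\nabla_k$ has order $\le k$, so the $n$-fold Koszul bracket is zero), the pieces with $k>n+1$ lie in $G^{\ge 2}$, and the pieces with $k=n$ lie in $G^0$.

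The $k=n$ contributions combine into $\sum_{n}[\pi_\hbar,\ldots,\pi_\hbar]_{\nabla_n,n}/n!$, which by Lemma \ref{DRrBVlemma} realises the $\gr_G^0$-Schouten $L_\infty$-structure on $F^2\widehat{\Pol}(A,-2)$; hence this sum equals the Poisson Maurer--Cartan expression of $\pi_\hbar$, vanishing by hypothesis in $\gr_G^0$. Consequently only the $k=n+1$ contributions survive in $G^1/G^2$, giving
\[
 \mathrm{Curv}(\pi_\hbar) \equiv \sum_{n \ge 1}\frac{1}{n!}[\pi_\hbar,\ldots,\pi_\hbar]_{\nabla_{n+1},n} \pmod{G^2}.
\]
Under the identification $\gr_G^1 \tilde{F}^2 Q\widehat{\Pol}(A,\nabla,-2) \cong \hbar F^1\widehat{\Pol}(A,-2)$ from the discussion preceding \S\ref{centresn}, the twisted differential on $\gr_G^1$ becomes $\delta + [\pi,-]_{\mathrm{Sch}}$, matching the differential on $F^1T_\pi\widehat{\Pol}(A/R,-2)$; thus the above element gives the advertised class in $\H^1(F^1T_\pi\widehat{\Pol}(A/R,-2))$.

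For the twisted quantisations $Q^{tw}\cP$, the same argument applies with $\tilde{F}^2$ replaced by $(G*\tilde{F})^2 = A \oplus \tilde{F}^2 Q\widehat{\Pol}$; the additional summand $A$ sits in $G^1 \setminus G^2$, so $\gr_G^1(G*\tilde{F})^2 Q\widehat{\Pol} \cong A \oplus \hbar F^1\widehat{\Pol} \cong T_\pi\widehat{\Pol}(A,-2)$, and the obstruction lives in $\H^1(T_\pi\widehat{\Pol}(A/R,-2))$ via the same explicit formula. The main technical obstacle is the $G$-weight bookkeeping above, in particular reconciling the contributions of the $k=n$ brackets with the Poisson Maurer--Cartan equation; the rest is standard $L_\infty$ obstruction theory.
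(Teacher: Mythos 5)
Your proposal is correct and follows essentially the same route as the paper's (much terser) proof: lift $\pi_{\hbar}$ tautologically, expand the Maurer--Cartan expression as $\sum_{n}\sum_{k\ge n}[\pi_{\hbar},\ldots,\pi_{\hbar}]_{\nabla_k,n}/n!$, kill the $k=n$ terms via the Poisson Maurer--Cartan equation (Lemma \ref{DRrBVlemma}) and the $k\ge n+2$ terms via the $G$-weight count, leaving the $\nabla_{n+1}$ terms. Your extra bookkeeping (the $G$-weight of each summand, the identification of $\gr_G^1$ with $\hbar F^1\widehat{\Pol}$, and the twisted case) just makes explicit what the paper leaves implicit; the only loose phrase is that the $k=n$ sum vanishes exactly, not merely in $\gr_G^0$, which holds because each $D^{\nabla}_k$ shifts polyvector weight by exactly $1-k$.
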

\begin{proof}
 Just observe that $\pi_{\hbar}$ naturally defines an element of 
\[
 (G*\tilde{F})^2Q\widehat{\Pol}(A/R,-2)/G^2= F_0\DR^r(A,\nabla) \by \prod_{j\ge 0}\hbar^{j-1}\DR^r(A,\nabla)/F_{j-2}),
\]
and that the Maurer-Cartan expression is given by
\[
 \sum_{n \ge 1} [\pi_{\hbar},\ldots,\pi_{\hbar} ]_{\nabla,n}/n!= \sum_{n \ge 1} [\pi_{\hbar}, \ldots,\pi_{\hbar}]_{\nabla_{\ge n},n}/n!.
\]
Since $\pi$ is a Poisson structure, we know that 
\[
 \sum_{n \ge 1} [\pi_{\hbar},\ldots,\pi_{\hbar}]_{\nabla_{ n},n}/n!=0,
\]
and we also know that the terms $[\pi_{\hbar},\ldots,\pi_{\hbar}]_{\nabla_{ \ge n+2},n}$ lie in $G^2Q\widehat{\Pol}$, leaving only the terms $[\pi_{\hbar},\ldots,\pi_{\hbar}]_{\nabla_{n+1},n}$ to contribute to the obstruction.
\end{proof}

\begin{example}
Consider the shifted cotangent bundle $T^*\bG_m[2]$, whose ring of functions is given by $A:=R[x,x^{-1},\xi]$ for $x$ of degree $0$ and $\xi$ of cochain degree $-2$. Consider the $(-2)$-shifted symplectic structure $xdx d\xi$, with associated non-degenerate $(-2)$-shifted Poisson structure $\pi_{\hbar}:=\hbar x^{-1}\pd_{\xi} \pd_x$. Now consider the right $\sD$-module structure on $A$ given by the connection $ \nabla_2 (a\pd_x + b\pd_{\xi}) =\frac{\pd a}{\pd x} + \frac{\pd b}{\pd \xi}$ (induced by the isomorphism $f \mapsto f dxd\xi$ from $A$ to the natural right $\sD$-module $\Omega^2_A[-2]$).

The obstruction of Lemma \ref{liftobslemma} is given by $[D^{\nabla}_2(\pi_{\hbar})]= [\hbar x^{-2}\pd_{\xi}] \in  \H^1(T_{\pi} \widehat{\Pol}(A/R,-2))$. Under the isomorphism $\mu(-,\pi_{\hbar})$, this corresponds to the element $[x^{-1}dx] \in \H^1(\DR(A/R)\cong\H^1(\bG_m/R)$, which is non-zero, so in this case the homotopy fibres of  $Q\cP(A,\nabla,-2)/G^2\to \cP(A,-2)$ over $\pi_{\hbar}$ are empty.

% Under the change of co-ordinates $x\xi \leftrightarrow \xi$, we can get essentially the same example by taking the standard symplectic structure  $dx d\xi$ on $T^*\bG_m[2]$ and the flat right connection on $A$ induced by the isomorphism $f \mapsto x^{-1}f dxd\xi $  from $A$ to the natural right $\sD$-module $\Omega^2_A[-2]$.
\end{example}

\begin{corollary}\label{liftobscor} 
 If the scheme $\Spec \H^0A$ is connected and  $R=\H^0R$, then whenever the obstruction of Lemma \ref{liftobslemma} vanishes, the twisted quantisation of a given non-degenerate $(-2)$-shifted Poisson structure is essentially unique up to addition by $\hbar R\brh$.  
\end{corollary}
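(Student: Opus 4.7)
The strategy is to apply a twisted analogue of Proposition \ref{quantprop} (extending Remark \ref{quantrmk}), then to evaluate the resulting torsor of cohomology classes under the connectedness hypothesis.

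I would first extend Proposition \ref{quantprop} to the twisted setting, obtaining an equivalence
\[
 Q^{tw}\cP(A,\nabla,-2)^{\nondeg} \simeq (Q^{tw}\cP(A,\nabla,-2)^{\nondeg}/G^2)\by \prod_{i \ge 2} \mmc(\hbar^i\DR(A/R)[-1])
\]
by the same inductive cocone argument as in the proof of Proposition \ref{quantprop} (the $N(\omega,\pi,k)$ computation is already written in terms of $(G*\tilde{F})^2$, so Lemma \ref{tangentlemma} applies verbatim to the twisted spaces for $k \ge 2$). The additional $A$-summand in $(G*\tilde{F})^2 Q\widehat{\Pol}$ beyond $\tilde{F}^2 Q\widehat{\Pol}$ then contributes a further factor $\mmc(\hbar\DR(A/R)[-1])$ to the fibre of $Q^{tw}\cP/G^2 \to \cP(A,-2)$, via an analogous cocone identification at the borderline level $k=1$ (here the twist effectively shifts the Hodge filtration by one, so non-degeneracy of $\pi$ again yields a filtered quasi-isomorphism). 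Altogether, whenever the first-order obstruction vanishes, the fibre of $Q^{tw}\cP(A,\nabla,-2)^{\nondeg} \to \cP(A,-2)$ over a non-degenerate $\pi_\hbar$ is non-empty and equivalent to $\prod_{i \ge 1} \mmc(\hbar^i\DR(A/R)[-1])$.

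Passing to $\pi_0$, this fibre becomes a torsor under
\[
\prod_{i \ge 1}\pi_0\mmc(\hbar^i\DR(A/R)[-1]) \;=\; \prod_{i \ge 1}\hbar^i\H^0(\DR(A/R)).
\]
Under the hypotheses $R = \H^0 R$ and $\Spec \H^0 A$ connected, with $A$ semi-smooth, one has $\H^0(\DR(A/R)) = R$: for the semi-smooth $A$ the Hodge-to-de Rham spectral sequence identifies $\H^0$ with the kernel of $d \co \H^0 A \to \H^0 \Omega^1_{A/R}$, which is precisely the locally constant functions on the connected scheme $\Spec \H^0 A$, i.e.\ $R$. Hence the torsor becomes $\prod_{i \ge 1}\hbar^i R = \hbar R\brh$, which is exactly the claim of the corollary.

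The principal technical obstacle is the twisted extension of the cocone analysis at the boundary level $k=1$, where one must check that the extra $A$-summand in $(G*\tilde{F})^2 Q\widehat{\Pol}$ contributes the expected $\hbar\DR(A/R)$-factor; once this verification is in place, the cohomological conclusion follows immediately from the connectedness hypothesis.
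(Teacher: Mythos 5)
Your argument is essentially the paper's own: both proofs run up the tower $\{Q^{tw}\cP(A,\nabla,-2)/G^k\}_k$, identify the layer at each level $k\ge 1$ over a fixed non-degenerate $\pi$ with $\mmc(\hbar^k\DR(A/R)[-1])$ via the filtered quasi-isomorphism $\mu(-,\pi)$ (invoking the twisted analogue of Proposition \ref{quantprop} for liftability past first order), and then use $R=\H^0R$ together with connectedness of $\Spec \H^0A$ to conclude $\H^0\DR(A/R)=R$. The one step you leave implicit is that the resulting abstract torsor structure on $\pi_0$ of the fibre is realised by literal \emph{addition} of elements of $\hbar R\brh$, which is what the statement asserts: this holds because $\mu(-,\pi)$ is unital, so the generators of $\H^0(\gr_G^k(G*\tilde{F})^2T_SQ\widehat{\Pol})\cong \hbar^k R$ are the constants $\hbar^k r$, and these are central $D^{\nabla}_S$-cocycles (so $e^{S+r(\hbar)}=e^{r(\hbar)}e^S$ still solves the quantum master equation); the paper makes this addition action explicit at the outset of its proof.
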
 
\begin{proof}
Since scalars $R$ are untouched by all the operations, the additive group $\hbar R\brh$ acts on the space $Q^{tw}\cP(A,\nabla,-2)$ by addition:  if $S$ is a  twisted quantisation then so is $S+ r(\hbar)$ for $r(\hbar) \in \hbar R\brh$. Also note that the pair $(\omega+ \hbar^2r'(\hbar), S + r(\hbar))$ are compatible whenever $(\omega, S) \in Q\Comp(A,\nabla,-2)$.

The hypotheses imply that $\H^0\DR(A/R)=R$, with $\H^{<0}\DR(A/R)=0$, and hence also  $\H^{\le 0}T_{\pi} \widehat{\Pol}(A/R,-2)\cong R$ for  non-degenerate Poisson structures $\pi$, via the isomorphism $\mu(-,\pi)$. When the obstruction of  Lemma \ref{liftobslemma} vanishes, Proposition \ref{quantprop} ensures that the space of twisted quantisations of $\pi$ is non-empty, and comparison of tangent spaces for the tower $\{Q^{tw}\cP(A,\nabla,-2)/G^k\}_k$ shows that $Q^{tw}\cP(A,\nabla,-2)_{\pi}$ must be an  $\hbar R\brh$-torsor under the action above.
\end{proof}

\subsubsection{Compatibility of connections and symplectic structures}

We now show that varying the right $\sD$-module structure allows us to eliminate the obstruction of Lemma \ref{liftobslemma}, and that there is a unique choice which does so.

\begin{lemma}\label{alphanablalemma}
Given $\alpha = \sum_{p \ge 1} \alpha_p \in \z^1(F^1\DR(A))$ and a flat right connection $\nabla$ on $A$ in the sense of Definition \ref{htpyrightDmoddef}, there is a right $\sD$-module structure $\nabla^{\alpha}$ given by 
\[
 \nabla_{p+1}^{\alpha}= \nabla_{p+1} + \lrcorner\alpha_p  \co \HHom_A(\Omega^{p}_{A/R},A)^{\#} \to A^{\#}[1-p].
\]
\end{lemma}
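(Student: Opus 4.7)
I would verify the three conditions of Definition \ref{htpyrightDmoddef} for the modified operations $\nabla^\alpha_{p+1} := \nabla_{p+1} + \lrcorner \alpha_p$. Conditions (1) and (2) follow by direct inspection from the corresponding properties of $\nabla$: for (1), $A$-linearity of contraction gives $(a\xi) \lrcorner \alpha_1 = a(\xi \lrcorner \alpha_1)$, whence $\nabla_2^\alpha(a\xi) = a\nabla_2^\alpha(\xi) - \xi(da)$; for (2), contraction with any $\alpha_p \in \Omega^p_A$ is itself $A$-linear on $\HHom_A(\Omega^p_A, A)^{\#}$, so $\nabla^\alpha_{p+1}$ inherits $A$-linearity from $\nabla_{p+1}$ for $p \ge 2$.

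For condition (3), the $L_\infty$-morphism axiom, the remark following Definition \ref{DRrdef} (invoking Vitagliano's Corollary~50) lets me reformulate it as the identity $(D^{\nabla^\alpha})^2 = 0$. Direct expansion from Definition \ref{DRrdef} shows $D^{\nabla^\alpha} = D^\nabla + E_\alpha$, where $E_\alpha$ is the degree-$1$ operator on $\DR^r(A,\nabla)$ defined, for a $p$-multivector $\pi$ and $\omega \in \Omega^{p-q}_A$, by $(E_\alpha \pi)(\omega) := \pi(\omega \wedge \alpha_q)$ (summed over the relevant $q$). Thus
\[
(D^{\nabla^\alpha})^2 = (D^\nabla)^2 + [D^\nabla, E_\alpha] + E_\alpha^2,
\]
using the graded commutator of odd operators. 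The first summand vanishes by flatness of $\nabla$, and the third by the computation $E_\alpha^2(\pi)(\omega) = \pi(\omega \wedge \alpha \wedge \alpha)$ together with $\alpha \wedge \alpha = 0$ in $\DR(A/R)$, which holds because $\alpha$ has odd total de Rham degree one.

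The substantive step is the Cartan-style identity $[D^\nabla, E_\alpha] = \pm E_{(d+\delta)\alpha}$, whose right-hand side vanishes since $\alpha \in \z^1(F^1\DR(A))$ is closed. I would establish this by decomposing into components $D^\nabla = \sum_k D^\nabla_k$ and $\alpha = \sum_q \alpha_q$: the piece $D^\nabla_1 = \delta$ yields $\pm E_{\delta\alpha}$ since the internal differential acts compatibly with contraction; the de Rham differential contained in $D^\nabla_2$ contributes $\pm E_{d\alpha}$ by the classical Cartan magic formula applied to contraction with each $\alpha_q$ (reducing to $\pi(\omega \wedge d\alpha_q)$ after cancellation of the cross terms $\pi(d\omega \wedge \alpha_q)$); and the higher connection pieces $\nabla_k$ for $k \ge 3$, being $A$-linear in their arguments, commute with contraction $\lrcorner \alpha_q$ up to the Koszul sign from reordering $\alpha_q$ past a $(k-1)$-form, producing cancellations within the graded commutator.

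The main obstacle is the treatment of the $\nabla_2$-piece of $D^\nabla_2$, whose non-$A$-linear Leibniz behaviour $\nabla_2(a\xi) = a\nabla_2(\xi) - \xi(da)$ generates additional boundary terms that must be shown to combine precisely with the de Rham contribution; and more generally the careful bookkeeping of Koszul signs across all bidegrees $(k,q)$. Both issues are resolved by writing the $L_\infty$-relation expressing $(D^\nabla)^2 = 0$ on the auxiliary polyvector $\pi \lrcorner (-\wedge \alpha_q)$ and comparing term by term; the combinatorics work out because the wedge product with $\alpha_q$ commutes through the ``higher-order correction'' layers of $\nabla$ by virtue of the $A$-linearity of $\nabla_k$ for $k\ge 3$, leaving only the Cartan-style boundary terms whose sum is $(d+\delta)\alpha = 0$.
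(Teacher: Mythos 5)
Your proposal is correct and follows essentially the same route as the paper: reformulate condition (3) as $(D^{\nabla^{\alpha}})^2=0$ via the identity $D^{\nabla^{\alpha}}=D^{\nabla}+\lrcorner\alpha$, kill the two squared terms by flatness and by $\alpha\wedge\alpha=0$, and identify the graded commutator with contraction by $(d\mp\delta)\alpha=0$. The only real difference is that the paper needs no casework on $k$ and no appeal to $A$-linearity of the $\nabla_k$: since every component acts as $\pi\mapsto\nabla_k(\pi\lrcorner\omega)$ and $\pi\lrcorner(\alpha\wedge\omega)=(\pi\lrcorner\alpha)\lrcorner\omega$, all the $\nabla$-terms cancel in the commutator uniformly, leaving only the $\pi(d\omega)$ and $\delta$ contributions, so your worry about the $\nabla_2$ Leibniz rule and the higher layers is a non-issue.
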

\begin{proof}
The only non-trivial condition to check is that $\{ \nabla_{p+1}^{\alpha}\}_{p \ge 1}$ defines an $L_{\infty}$-derivation for the opposite module structure, or equivalently that $D^{\nabla^{\alpha}}\circ D^{\nabla^{\alpha}}=0$. Now, observe that 
\[
 D^{\nabla^{\alpha}}(\pi)= D^{\nabla}(\pi) + \pi\lrcorner \alpha,
\]
so 
\begin{align*}
 D^{\nabla^{\alpha}}\circ D^{\nabla^{\alpha}}&= D^{\nabla}\circ D^{\nabla} + (\lrcorner\alpha )\circ  D^{\nabla}+  D^{\nabla}\circ (\lrcorner\alpha ) + \lrcorner(\alpha\wedge \alpha)\\
&=(\lrcorner\alpha )\circ  D^{\nabla}+  D^{\nabla}\circ (\lrcorner\alpha).
\end{align*}

For $\pi \in\HHom_A(\Omega^p_{A},A)[p]$ and $\omega \in \Omega^q_A$, we have $D^{\nabla}(\pi)(\omega)= \nabla(\pi \lrcorner \omega) \pm \pi(d\omega)+ (\delta \pi)(\omega)$, so 
\begin{align*}
 ((\lrcorner\alpha )\circ  D^{\nabla}(\pi))(\omega)&=  \mp D^{\nabla}(\pi)(\alpha \wedge \omega)\\
&=\mp\nabla(\pi \lrcorner (\alpha \wedge \omega)) \mp \pi(d(\alpha \wedge\omega))\mp %(\delta \pi)(\alpha \wedge\omega)
\delta( \pi(\alpha \wedge\omega)) \pm \pi(\delta(\alpha \wedge\omega)).\\
(D^{\nabla}\circ (\lrcorner\alpha ))(\pi)(\omega)&= \pm D^{\nabla}(\pi \lrcorner\alpha )(\omega)\\
&=\pm\nabla(\pi \lrcorner (\alpha \wedge \omega))\pm \pi(\alpha \wedge d\omega) \pm %(\delta (\pi \lrcorner\alpha))(\omega)
\delta( \pi(\alpha \wedge\omega)) \mp \pi(\alpha \wedge\delta\omega).
\end{align*}
Cancelling terms, this gives
\[
 D^{\nabla^{\alpha}}( D^{\nabla^{\alpha}}\pi)(\omega) = \mp\pi(d\alpha \wedge \omega) \pm \pi(\delta\alpha \wedge \omega),
\]
but $d\alpha \mp \delta \alpha=0$ because $\alpha \in \z^1(F^1\DR(A))$.
\end{proof}

\begin{lemma}\label{obalphalemma}
  Given a $(-2)$-shifted Poisson structure $\pi_{\hbar} =\sum_{j \ge 2} \hbar^{j-1}\pi_j$, a flat right connection $\nabla$ on $A$, and an element $\alpha = \sum_{p \ge 1} \alpha_p \in \z^1(F^1\DR(A))$,
the difference
\[
 \ob_{\nabla^{\alpha}}(\pi_{\hbar})-\ob_{\nabla}(\pi_{\hbar}) \in \H^1(F^1T_{\pi} \widehat{\Pol}(A/R,-2))
\]
between the obstructions to lifting $\pi_{\hbar}$ to $Q\cP(A,\nabla^{\alpha},-2)/G^2$ or to $Q\cP(A,\nabla,-2)/G^2$ (cf. Lemma \ref{liftobslemma}) is 
\[
 \mu(\alpha, \pi_{\hbar}),
\]
for the compatibility map $\mu(-,\pi_{\hbar}) \co\DR(A) \to T_{\pi_{\hbar}}\cP(A,-2)$ of \cite[Definition \ref{poisson-mudef}]{poisson}, a multiplicative map given on generators by
$\mu(a df, \pi_{\hbar}):= \pi_{\hbar} \lrcorner (a df)$.
\end{lemma}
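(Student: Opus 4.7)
The plan is to compute the difference of obstructions directly from the formula of Lemma \ref{liftobslemma}, exploiting the linearity of the $L_\infty$-brackets in their defining operator. The $n$-ary bracket $[-]_{D,n}$ of Definition \ref{LinftyBVdef} is defined by iterated commutators $[\ldots[D,a_1],\ldots,a_n](1)$ and so is linear in $D$. By Lemma \ref{alphanablalemma}, replacing $\nabla$ by $\nabla^{\alpha}$ modifies $D^{\nabla}_{n+1}$ by adding the operator $\lrcorner\alpha_n \co \pi\mapsto \pi\lrcorner\alpha_n$ on polyvectors. Applying Lemma \ref{liftobslemma} to both connections and subtracting therefore yields
\[
\ob_{\nabla^{\alpha}}(\pi_{\hbar}) - \ob_{\nabla}(\pi_{\hbar}) = \sum_{n\ge 1}[\pi_{\hbar},\ldots,\pi_{\hbar}]_{\lrcorner\alpha_n,n}/n!.
\]

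The heart of the argument is then to identify each summand with $\mu(\alpha_n,\pi_{\hbar})$. Locally decompose $\alpha_n = \sum a_0\, dg_1\wedge\cdots\wedge dg_n$; then $\lrcorner\alpha_n$ equals, up to sign, $a_0\cdot(\lrcorner dg_1)\cdots(\lrcorner dg_n)$, a product of $n$ derivations of the graded-commutative polyvector algebra. For an order-$n$ differential operator presented as a product of $n$ derivations, the standard symbol formula identifies the $n$-ary $L_{\infty}$-bracket as the symmetrised product of the derivations: evaluated on $n$ copies of $\pi_{\hbar}$ it produces an $n!$-fold sum whose terms all agree once the Koszul signs arising from permuting the factors $\pi_{\hbar}\lrcorner dg_i$ are matched against the antisymmetry of the wedge. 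Dividing by $n!$ leaves $a_0\prod_i(\pi_{\hbar}\lrcorner dg_i)$, which by the multiplicative characterisation of $\mu(-,\pi_{\hbar})$ is precisely $\mu(a_0\, dg_1\wedge\cdots\wedge dg_n,\pi_{\hbar})$. Summing over the local decomposition and then over $n$ produces $\mu(\alpha,\pi_{\hbar})$.

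The main obstacle is the sign bookkeeping in this symbol calculation: the graded shifts in the polyvector complex force Koszul signs on both the $L_{\infty}$-symmetrisation and on the wedge antisymmetry, and one must verify these cancel cleanly against the $n!$ from the symmetric group. A clean way to handle this is to check the identity first on the universal polynomial model $A = R[x_1,\ldots,x_m]$ with $\alpha_n$ a monomial in the $dx_i$, where both sides become explicit polynomial expressions in the $\pd_{x_j}$ and the comparison is elementary, and then to extend by $A$-linearity in $a_0$ and by multiplicativity in $\alpha$ of both sides --- the left-hand side inheriting its multiplicativity from the fact that $\lrcorner(\omega\wedge\omega')$ splits across the graded-commutative polyvector product, which propagates through the iterated commutator formula.
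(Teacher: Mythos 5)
Your argument is correct and follows the paper's strategy for its first half: the reduction
$\ob_{\nabla^{\alpha}}(\pi_{\hbar})-\ob_{\nabla}(\pi_{\hbar}) = \sum_{n\ge1}[\pi_{\hbar},\ldots,\pi_{\hbar}]_{\lrcorner\alpha_n,n}/n!$, obtained from Lemma \ref{alphanablalemma} and linearity of the iterated commutators in the operator, is exactly the paper's first step. Where you diverge is in identifying each summand with $\mu(\alpha_n,\pi_{\hbar})$: you pass to local coordinates, write $\lrcorner\alpha_n$ as $a_0$ times a product of $n$ derivations, and invoke the symbol formula for the $n$-ary bracket of such a product, deferring the Koszul-sign and $n!$ bookkeeping to a polynomial model together with multiplicativity in $\alpha$. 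The paper instead uses the single coordinate-free commutator identity $[\lrcorner \alpha_n, v]= \lrcorner\, i_v(\alpha_n)$, where $i$ is the insertion of a polyvector into a form; iterating $n$ times and evaluating at $1$ gives $[\pi_{\hbar},\ldots,\pi_{\hbar}]_{\lrcorner\alpha_n,n}/n! = 1\lrcorner (i_{\pi_{\hbar}}^{\circ n}(\alpha_n))/n! = \mu(\alpha_n,\pi_{\hbar})$ in one line. The two computations are equivalent --- the insertion identity is the coordinate-free packaging of your symmetrised product of derivations --- but the paper's version disposes of precisely the sign calculus you flag as the main obstacle, and needs no local trivialisation or extension from a universal model. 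Your route goes through, but proving the commutator identity once and iterating it would make the delicate part of your argument unnecessary.
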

\begin{proof}
 Since $\ob_{\nabla}(\pi_{\hbar})= \sum_{n \ge 1} [\pi_{\hbar}, \ldots, \pi_{\hbar}]_{\nabla_{n+1},n}/n!$, we have
\[
 \ob_{\nabla^{\alpha}}(\pi_{\hbar})-\ob_{\nabla}(\pi_{\hbar}) = \sum_{n \ge 1} [\pi_{\hbar}, \ldots, \pi_{\hbar}]_{\lrcorner \alpha_n,n}/n!,
\]
where $[v_1, \ldots, v_n]_{\lrcorner \alpha_n,n}= [\ldots [\lrcorner \alpha_n, v_1], \ldots , v_n](1)$. 

Now, for the insertion operator $i$, we have $[\lrcorner \alpha, v]= \lrcorner i_v(\alpha_n)$, so 
\[
 [\pi_{\hbar}, \ldots, \pi_{\hbar}]_{\lrcorner \alpha_n,n}/n!= 1 \lrcorner (i_{\pi_{\hbar}}^{\circ n}(\alpha_n))/n!,
\]
which is just $\mu(\alpha_n, \pi_{\hbar})$.
\end{proof}

\begin{definition}
 We define the space of  flat right connections on $A$ over $R$ to be the simplicial set given in level $n$ by the set of flat right connections on $A\ten_{\Q} \Omega^{\bt}(\Delta^n)$ over $R\ten_{\Q} \Omega^{\bt}(\Delta^n)$, with the obvious simplicial operations.
\end{definition}

\begin{proposition}\label{uniqueconn}
Given a non-degenerate $(-2)$-shifted Poisson structure $\pi_{\hbar} =\sum_{j \ge 2} \hbar^{j-1}\pi_j$ on $A$ over $R$, there is essentially at most one pair $(\nabla, S)$ where $\nabla$ is a flat right connection on $A$ and $S$ is a first-order deformation quantisation of $\pi_{\hbar}$ relative to $\nabla$.

Explicitly, the space of pairs $(\nabla, S)$, for $S$ in the homotopy fibre of   $Q\cP(A,\nabla,-2)/G^2 \to \cP(A,-2)$ over $\pi_{\hbar}$, is either empty or contractible, depending on whether any flat right connections on $A$ exist. 
\end{proposition}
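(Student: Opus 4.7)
The plan is to identify the space of pairs $(\nabla, S)$ as the homotopy fibre of an equivariant map of torsors, where equivariance is implemented by a weak equivalence coming from non-degeneracy of $\pi_{\hbar}$. First, I would establish that when the simplicial set $\mathrm{Conn}(A/R)$ of flat right connections is non-empty, it is a torsor under the simplicial abelian group $K := \mmc(F^1\DR(A/R)[-1])$ of closed elements in positive de Rham filtration. The action is provided by Lemma \ref{alphanablalemma}; transitivity follows by unpacking Definition \ref{htpyrightDmoddef}, noting that the difference between two flat right connections sharing their leading term is $A$-linear in arities $\ge 2$, dualises (using perfection of $\Omega^1_{A/R}$, available in the non-degenerate setting) to an element $\alpha \in \prod_{p \ge 1}\Omega^p_{A/R}[-p]$, and satisfies closedness in $F^1\DR(A/R)$ because both $D^{\nabla}$ and $D^{\nabla'}$ are square-zero.

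Next, combining Lemmas \ref{liftobslemma} and \ref{obalphalemma} yields a $K$-equivariant obstruction map
\[
\ob_{\bullet}(\pi_{\hbar}) \co \mathrm{Conn}(A/R) \ra \mmc(F^1 T_{\pi_{\hbar}}\widehat{\Pol}(A/R,-2)[-1]),
\]
equivariant for translation by the induced map $\mu_{*} := \mu(-, \pi_{\hbar})_{*}$ on the target. Non-degeneracy of $\pi_{\hbar}$ forces $\mu(-, \pi_{\hbar}) \co F^1\DR(A/R) \to F^1 T_{\pi_{\hbar}}\widehat{\Pol}(A/R,-2)$ to be a filtered quasi-isomorphism, arguing exactly as in the proof of Proposition \ref{QcompatP1}: the associated gradeds are governed by wedge powers of $\pi_2^{\sharp}$, which are quasi-isomorphisms by hypothesis. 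Hence $\mu_{*}$ is a weak equivalence of simplicial abelian groups.

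The space of pairs projects onto $\mathrm{Conn}(A/R)^{\ob=0}$ with fibre at $\nabla$ the homotopy fibre of $Q\cP(A,\nabla,-2)/G^2 \to \cP(A,-2)$ over $\pi_{\hbar}$; by the $k=2$ case of Proposition \ref{QcompatP1} together with non-degeneracy, each such non-empty fibre is contractible. Meanwhile $\mathrm{Conn}(A/R)^{\ob=0}$ is the homotopy fibre of a $K$-equivariant map whose target action is the weak equivalence $\mu_{*}$, so it is either empty (when no flat right connection makes $\pi_{\hbar}$ first-order quantisable) or a torsor under the contractible $\ker(\mu_{*})$. The main obstacle is the torsor claim of the first step: Lemma \ref{alphanablalemma} only supplies the action, not its freeness or transitivity, and these require a delicate unpacking of the $L_{\infty}$-derivation identities in Definition \ref{htpyrightDmoddef}. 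If this proves awkward, one may bypass the analysis by assembling $(\nabla, S)$ into a single Maurer--Cartan element of an $L_{\infty}$-algebra combining $\DR'(A/R)$ with $Q\widehat{\Pol}$, and deducing contractibility from a single quasi-isomorphism argument.
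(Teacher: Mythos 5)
Your first two steps match the paper's proof: the space of flat right connections is identified (via Lemma \ref{alphanablalemma} and perfectness of the $\Omega^p_A$) with a torsor under $\mmc(F^1\DR(A/R))$, and Lemmas \ref{liftobslemma} and \ref{obalphalemma} make the obstruction map equivariant along $\mu(-,\pi_{\hbar})$, which is a quasi-isomorphism by non-degeneracy. The gap is in your final assembly. The fibre of the space of pairs over a \emph{fixed} $\nabla$ with vanishing obstruction is a torsor under $\mmc(\hbar F^1T_{\pi_{\hbar}}\widehat{\Pol}(A,-2)[-1])$, whose homotopy groups are $\H^{\le 0}(F^1T_{\pi_{\hbar}}\widehat{\Pol}(A/R,-2))\cong \H^{\le 0}(F^1\DR(A/R))$; these do not vanish in general (for $(-2)$-shifted symplectic $A$ the cotangent complex has amplitude $[-2,0]$, so $F^1\DR(A/R)$ has plenty of cohomology in non-positive total degrees), so this fibre is \emph{not} contractible, and Proposition \ref{QcompatP1} makes no such claim --- it compares $Q\Comp^{\nondeg}$ with $Q\cP^{\nondeg}$ and says nothing about the fibres of $Q\cP/G^2 \to \cP$. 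Symmetrically, the locus of connections with vanishing obstruction (as a union of path components of $\mathrm{Conn}(A/R)$) has $\pi_i \cong \H^{1-i}(F^1\DR(A/R))$ for $i\ge 1$, so it is not contractible either. Contractibility of the total space arises only from the cancellation of these two contributions in the fibration sequence, so the decomposition into ``contractible base times contractible fibre'' cannot work.

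The correct packaging --- which is what the paper does, and which is essentially your closing suggestion --- is to keep $\nabla$ and $S$ coupled: after choosing a base connection $\nabla^0$, the space of pairs over $\pi_{\hbar}$ is the homotopy fibre over $0$ of the single map
\[
\alpha \mapsto \ob_{\nabla^0}(\pi_{\hbar}) + \mu(\alpha,\pi_{\hbar}) \co \mmc(F^1\DR(A/R)) \to \mmc(F^1T_{\pi_{\hbar}}\widehat{\Pol}(A/R,-2)),
\]
where the path data in the homotopy fibre records the lift $S$. This map is a translate of a weak equivalence, so its homotopy fibre is contractible whenever the source is non-empty; note this also shows that your parenthetical case ``connections exist but none quantise $\pi_{\hbar}$ to first order'' cannot occur, which is part of the content of the proposition.
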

\begin{proof}
If there do not exist  flat right connections on $A$, then the space is empty. Otherwise, choose a connection $\nabla^0$. Lemma \ref{alphanablalemma} gives a morphism $\alpha \mapsto \nabla^{0,\alpha}$ from $\mmc(F^1\DR(A))$ to the space of flat right connections. It follows from the non-degeneracy hypothesis that  each $\Omega^p_A$ is perfect as an $A$-module, so the map from $\Omega^p_A$ to its double dual is a quasi-isomorphism. Obstruction calculus as in \cite[\S \ref{poisson-towersn}]{poisson} then shows that $\mmc(F^1\DR(A))$ is weakly equivalent to the space of flat right connections.

By Lemma \ref{obalphalemma}, the space of  pairs $(\nabla, S)$ as above for varying $\pi$ is given by the homotopy fibre of
\begin{align*}
 \mmc(F^1\DR(A)) \by \cP(A,-2)^{\nondeg} &\to  \mmc(F^1T_{\pi} \widehat{\Pol}(A/R,-2))\\
(\alpha, \pi) \mapsto \ob_{\nabla^0}(\pi) +\mu(\alpha, \pi)
\end{align*}
over $0$. Since $\pi$ is non-degenerate, the map $\mu(-,\pi)$ is a filtered quasi-isomorphism, so the natural map  from this space of pairs down to $ \cP(A,-2)^{\nondeg}$ is a weak equivalence. In other words, 
$\pi$ admits an essentially unique first-order quantisation compatible with an essentially  unique flat right connection.
\end{proof}

%%might remark here that what we really have is a bundle over $\cP$ with fibres $\mmc(\cone(\mu(-,\pi) \co \DR \to T_{\pi}))$, and section $\pi \mapsto $\ob_{\nabla}(\mu)$, and we're taking homotopy vanishing locus. Eventually better to think in terms of curvature obstruction.

Combining Proposition \ref{uniqueconn} with Proposition \ref{quantprop} and the proof of Corollary \ref{liftobscor}, we have:
\begin{corollary}\label{uniqueconncor}
Take a non-degenerate $(-2)$-shifted Poisson structure $\pi_{\hbar} =\sum_{j \ge 2} \hbar^{j-1}\pi_j$ on $A$ over $R$, with $\Spec \H^0A$ connected and  $R=\H^0R$. If $A$ admits any flat right connections, then   pairs $(\nabla, S)$, with $\nabla$  a flat right connection on $A$ and $S \in Q\cP(A, \nabla,-2)$ a  quantisation  of $\pi_{\hbar}$, are essentially unique up to addition by $(0,\hbar^2 R\brh)$.
\end{corollary}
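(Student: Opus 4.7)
The plan is to assemble the statement directly from Proposition \ref{uniqueconn} and Corollary \ref{liftobscor}, which respectively control the connection/first-order quantisation together, and the lift from first-order to all orders. First, I would apply Proposition \ref{uniqueconn} to the non-degenerate Poisson structure $\pi_{\hbar}$. The hypothesis that $A$ admits some flat right connection puts us in the non-empty case, so the proposition yields a contractible space of pairs $(\nabla, S_1)$ with $\nabla$ a flat right connection on $A$ and $S_1$ a first-order quantisation of $\pi_{\hbar}$ relative to $\nabla$, i.e.\ a lift in the homotopy fibre of $Q\cP(A,\nabla,-2)/G^2 \to \cP(A,-2)$ over $\pi_{\hbar}$. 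In particular, the connection $\nabla$ is essentially uniquely determined by $\pi_{\hbar}$.

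Next, I would fix such a $\nabla$ and note that the existence of a first-order lift $S_1$ shows that the obstruction $\ob_{\nabla}(\pi_{\hbar})$ of Lemma \ref{liftobslemma} vanishes in $\H^1(T_{\pi}\widehat{\Pol}(A/R,-2))$. Because the additional hypotheses ($\Spec \H^0 A$ connected and $R=\H^0 R$) are in force, Corollary \ref{liftobscor} then applies and gives that the fibre $Q^{tw}\cP(A,\nabla,-2)_{\pi_{\hbar}}$ of full twisted quantisations lifting $\pi_{\hbar}$ is non-empty and an $\hbar R\brh$-torsor under the translation action $S \mapsto S + r(\hbar)$.

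Combining these two facts gives the claim: the space of pairs $(\nabla, S)$ has essentially unique first factor by Proposition \ref{uniqueconn}, and, for each such $\nabla$, its second-factor fibre is an $\hbar R\brh$-torsor by Corollary \ref{liftobscor}, which is exactly uniqueness up to addition by $(0,\hbar R\brh)$. The only subtle point needing verification, and the step I would expect to spell out carefully, is the compatibility of the two statements: translation of $S$ by an element $r(\hbar) \in \hbar R\brh$ keeps $S$ a lift of $\pi_{\hbar}$ (since the shift lives entirely in the $G^{\ge 1}$ part and projects to $0$ in $\cP(A,-2)$), and it does not force a change in $\nabla$ (since scalars in $R$ are preserved by all brackets and by $D^{\nabla}$). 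Hence the torsor action from Corollary \ref{liftobscor} sits inside the contractible fibre from Proposition \ref{uniqueconn} and realises the claimed $(0, \hbar R\brh)$-ambiguity.
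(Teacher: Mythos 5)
Your proposal is correct and is essentially the paper's own argument: the paper derives this corollary precisely by combining Proposition \ref{uniqueconn} (contractibility of the space of pairs $(\nabla,S_1)$ at first order) with Corollary \ref{liftobscor} (the $\hbar R\brh$-torsor structure on full twisted quantisations once the first-order obstruction vanishes). Your extra compatibility check at the end is a reasonable elaboration of what the paper leaves implicit.
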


\section{Global quantisations}\label{globalsn}

The derived affine schemes considered  in \S \ref{affinesn} are a fairly limited class of objects with which to work, so in this section we indicate how to formulate and study quantisations of $(-2)$-shifted symplectic structures on derived Deligne--Mumford stacks or on derived Artin stacks. The generalisation proceeds along much the same lines as \cite[\S\S \ref{DMsn}, \ref{Artinsn}]{DQvanish} and \cite[\S\S \ref{DMsn}, \ref{Artinsn}]{poisson}, so we concentrate on those features which are specific to our setting.

\subsection{\'Etale functoriality and derived Deligne--Mumford stacks}\label{DMsn}

In \S \ref{cfBJ}, we saw some $(-2)$-shifted quantisations on non-affine schemes, defined in terms of strict functoriality, but the definitions and constructions of \S \ref{affinesn} adapt much more generally using derived \'etale functoriality.

The construction of \cite[Definitions \ref{poisson-inftyFXdef} and Definition \ref{poisson-DMFdef}]{poisson} uses homotopy \'etale descent to construct, for any strongly quasi-compact derived Deligne--Mumford $n$-stack $\fX$ over $R$,  a simplicial set $F(\fX)$ associated to any $\infty$-functor $F$ on homotopy \'etale morphisms of $R$-CDGAs. As in  \cite[\S \ref{poisson-descentsn}]{poisson}, such  $\infty$-functors can be constructed from any construction $F$ on fibrant cofibrant $[m]$-diagrams  of  $R$-CDGAs satisfying %the following (
\cite[Properties \ref{poisson-Fproperties}]{poisson}.
%):
% 
% \begin{enumerate}
%  \item the maps from $F(A(0)\to \ldots \to A(m))$ to 
% \[
%  F(A(0)\to A(1))\by_{F(A(1))}^h F(A(1)\to A(2))\by^h_{F(A(2))}\ldots \by_{F(A(n-1))}^hF(A(n-1) \to A(n))
% \]
%  are weak equivalences;
% 
% \item if the $[1]$-diagram $A \to B$ is a quasi-isomorphism, then the natural maps from $F(A \to B)$ to $F(A)$ and to $F(B)$ are weak equivalences. 
% 
% \item if the $[1]$-diagram $A \to B$ is formally \'etale, then the natural map from $F(A \to B)$ to $F(A)$ is a  weak equivalence. %%note that formally \'etale preserves perfection of cot.
% \end{enumerate}
%
In order to construct $Q\widehat{\Pol}(\fX, \nabla,-2)$, we need such a construction for right connections $\nabla $ and for right de Rham complexes. 

By \cite[Lemma \ref{poisson-calcTlemma2}]{poisson}, a construction exists for the tangent sheaf $\oR\HHom_{\sO_{\fX}}(\Omega^1_{\fX},\sO_X)$ satisfying
\cite[Properties \ref{poisson-Fproperties}]{poisson}, while \cite[Definition \ref{DQvanish-DMDdef}]{DQvanish} extends this construction to the DGAA $\sD_{\fX/R}$ of differential operators. In particular, we have an $\infty$-functor $A \mapsto \oR F_1\sD_{A/R}$ of dg Atiyah algebras on the site of homotopy \'etale affines over $\fX$. 

Moreover, the left $\sD$-module structure of $\sO_{\fX}$
 induces a functorial decomposition $\oR F_1\sD_{A/R} \cong \oR F_0\sD_{A/R} \oplus \oR \gr^F_1\sD_{A/R}$ of left $\oR F_0\sD_{A/R}$-modules which respects the commutator Lie bracket (of weight $-1$). 
%For fixed $A$, this is quasi-isomorphic to the standard decomposition $F_1\sD_{A/R} \cong A \oplus \HHom_A(\Omega^1_A,A)$.
In particular, the decomposition makes $\oR \gr^F_1\sD_{A/R}$ a dg Lie--Rinehart algebra (or Lie algebroid) over $\oR F_0\sD_{A/R}$, and these are resolutions of the tangent sheaf and structure sheaf respectively.

Constructing a flat right connection $\nabla$ on $\sO_{\fX}$ then amounts to constructing a functorial  homotopy anti-involution on the Atiyah algebra $\oR F_1\sD_{-/R}$, or equivalently on its universal enveloping algebra $\oR \sD_{-/R}$. Applying the description of  \cite[Remarks \ref{DQpoisson-noninvolutivermk} and \ref{DQpoisson-filteredoperadrmk2}]{DQpoisson} functorially, the potential obstruction to such a right connection existing lies in $\H^2(F^1\DR(\fX/R))$, while if the obstruction vanishes the space of flat right connections is a torsor for the additive group space $\mmc(F^1\DR(\fX/R))$. 

Adapting the relevant Definitions along the lines of \cite[Definitions \ref{poisson-inftyFXdef} and Definition \ref{poisson-DMFdef}]{poisson}, Propositions \ref{quantprop} and \ref{uniqueconn}
 then adapt with the results above to give:

\begin{proposition}\label{DMquantprop}
Given a strongly quasi-compact derived Deligne--Mumford $n$-stack $\fX$ locally of finite type over $R$, and a homotopy right connection $\nabla$ on $\sO_{\fX}$,
the   maps
\begin{align*}
 Q\cP(\fX,\nabla,-2)^{\nondeg}/G^k &\to (Q\cP(\fX,\nabla,-2)^{\nondeg}/G^2)\by^h_{(G\Sp(\fX,-2)/G^2)}(G\Sp(\fX,-2)/G^k) \\ 
&\simeq (Q\cP(\fX,\nabla,-2)^{\nondeg}/G^2)\by \prod_{i=2}^{k-1} \mmc(\hbar^i\DR(\fX/R)[-1])
\end{align*}
coming from Proposition \ref{QcompatP1}  are weak equivalences for all $k \ge 2$.

Moreover, for any non-degenerate $(-2)$-shifted Poisson structure $\pi_{\hbar}$ on $\fX$,
 the space of pairs $(\nabla, S)$, for $S$ in the homotopy fibre of   $Q\cP(\fX,\nabla,-2)/G^2 \to \cP(\fX,-2)$ over $\pi_{\hbar}$, is either empty or contractible, depending on whether any flat right connections exist on $\sO_{\fX}$; the potential  obstruction lies in $\H^2(F^1\DR(\fX/R))$.
\end{proposition}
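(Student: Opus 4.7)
The plan is to globalise Propositions \ref{quantprop} and \ref{uniqueconn} via the homotopy \'etale descent framework of \cite[\S \ref{poisson-descentsn}]{poisson}, following the pattern of \cite[\S \ref{DQvanish-DMsn}]{DQvanish}. First, I would construct $\infty$-functors on the homotopy \'etale site of $\fX$ implementing each of the constructions from \S \ref{affinesn}: the filtered $BV_{\infty}$-algebra $Q\widehat{\Pol}(-,\nabla,-2)$ and its tangent, the Maurer--Cartan spaces $Q\cP$, $Q^{tw}\cP$, $TQ\cP$ equipped with their $G$-, $\tilde{F}$- and $(G*\tilde{F})$-filtrations, the generalised pre-symplectic space $G\PreSp$, and the space $Q\Comp$ of compatible pairs. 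The key local inputs are descent for $\DR^r$, inherited from the Atiyah algebra resolution and the decomposition $\oR F_1\sD_{-/R}\cong \oR F_0\sD_{-/R} \oplus \oR \gr^F_1\sD_{-/R}$ mentioned in the text preceding the proposition, together with descent for $\DR'$, inherited from descent for the conormalisation of the \v Cech nerve. Once \cite[Properties \ref{poisson-Fproperties}]{poisson} have been verified for these assignments, each of the global objects exists as an appropriate homotopy limit, and the global form of Proposition \ref{QcompatP1} follows automatically since its proof rests on filtered quasi-isomorphisms which are local on the \'etale site.

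For the first assertion, I would then globalise the inductive argument behind Proposition \ref{quantprop}. Fixing $(\omega,\pi) \in \Comp(\fX,-2)$, the tower of homotopy fibre sequences computing $Q\Comp(\fX,\nabla,-2)/G^k$ is globally defined by descent, Lemma \ref{tangentlemma} is a purely local statement about filtered quasi-isomorphisms, and the induction on $k$ together with the global form of Proposition \ref{QcompatP1} then yields the required equivalence.

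For the second assertion, the two-stage strategy already sketched in the text preceding the proposition produces the connection: a flat right connection on $\sO_{\fX}$ is a functorial homotopy anti-involution on $\oR\sD_{-/R}$, with obstruction in $\H^2(F^1\DR(\fX/R))$ and torsor of solutions under $\mmc(F^1\DR(\fX/R))$ when unobstructed. Lemmas \ref{alphanablalemma} and \ref{obalphalemma} are local and thus descend, and non-degeneracy of $\pi_{\hbar}$ ensures that the global comparison map $\mu(-,\pi_{\hbar})$ is a filtered quasi-isomorphism; the argument of Proposition \ref{uniqueconn} then adapts verbatim to exhibit the homotopy fibre over $\pi_{\hbar}$ of the space of pairs $(\nabla,S)$ as either empty or contractible, according to whether the connection obstruction vanishes.

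The hard part will be verifying that the filtered $BV_{\infty}$-operations and the comparison maps $\mu$, $\nu$ of Proposition \ref{QPolmudef} transform functorially under homotopy \'etale morphisms of CDGAs, in a manner compatible with both the $G$- and $\tilde{F}$-filtrations; this functoriality is needed both to make $Q\widehat{\Pol}(-,\nabla,-2)$ an honest $\infty$-functor on the \'etale site and to ensure that the tangent-space quasi-isomorphisms which drive the induction descend properly. Once that functoriality has been established, the remainder of the argument reduces formally to the affine results of \S \ref{affinesn} via descent.
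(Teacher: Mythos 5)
Your proposal is correct and follows essentially the same route as the paper: set up the étale-functorial versions of all the affine constructions via \cite[Properties \ref{poisson-Fproperties}]{poisson} and the Atiyah-algebra functoriality of \cite[Definition \ref{DQvanish-DMDdef}]{DQvanish}, identify flat right connections with functorial homotopy anti-involutions on $\oR\sD_{-/R}$ so that their obstruction lands in $\H^2(F^1\DR(\fX/R))$ with torsor structure under $\mmc(F^1\DR(\fX/R))$, and then let Propositions \ref{quantprop} and \ref{uniqueconn} globalise by descent. The point you flag as the "hard part" — filtered functoriality of the $BV_{\infty}$-operations and of $\mu$, $\nu$ — is exactly what the paper delegates to the cited constructions, so nothing is missing.
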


% 
% \begin{itemize}
%  \item \cite{DQvanish} gives functorial $\sD$, almost comm.
% \item \cite[Remarks \ref{noninvolutivermk} and \ref{filteredoperadrmk2}]{DQpoisson} classify (involutive) almost comm algebras. Thus obstruction in $\H^2(F^1)$, via homotopy fibre.
% \item Thus WMA strict involutive almost comm algebra $D$, functorially in \'etales, with filtered QIM $D \to \sD$ (take projective model str, $D$ cofibrant). 
% \item Now want to apply constrn from \cite{vitagliano} to $F_1D$. Instead of just action on $A$, want action on $F_0D$. Need section of $F_0D \to F_1D$, such that kernel is Lie subalgebra, closed under left multiplication by $F_0D$.
% %\item Looking at space of involutions on $\sD$ causes cofibrancy problems. 
% \item Classify Atiyah algebroids with $F_1D= F_0D \oplus L$, $F_0D$-linr decomp with $L$ a Lie algebra (ess unique), then look for involutive. That's the space of right connections on Lie algebroids, and this is just becoming a strictification question.
% \item Also have unique Atiyah algebroid with $\gr_F=A \oplus L$. Can just appeal to \cite{DQpoisson} by saying involutive Atiyah algebroid on $A \oplus L$ same as involutive ac DGA on $\cU(A \oplus L)$, so ess unique, while space of all these is paramd by $F^1$. 
% 
% \end{itemize}
% 

\subsection{Derived Artin stacks}\label{Artinsn}

The stacky CDGAs (commutative bidifferential  bigraded algebras) of \cite[\S \ref{poisson-stackyCDGAsn}]{poisson} model formal completions of affine atlases over derived Artin $n$-stacks, giving  formally \'etale resolutions of derived Artin $n$-stacks by affine objects. Polyvectors and differential operators satisfy formally \'etale functoriality as in \cite[\S \ref{Artinsn}]{poisson} and \cite[\S \ref{Artinsn}]{DQvanish}, so the reasoning of \S \ref{DMsn} adapts, with Proposition \ref{DMquantprop} adapting verbatim for derived Artin $n$-stacks. 

Note that the right de Rham complexes involved in formulating $(-2)$-shifted quantisations for derived Artin $n$-stacks are thus defined in terms of stacky CDGAs, giving rise to complexes of quantised $(-2)$-shifted polyvectors which are formal deformations of the complexes of polyvectors from  \cite[\S \ref{Artinsn}]{poisson}.

\section{Virtual fundamental classes from quantisations }\label{cfBJ}

\subsection{Right de Rham cohomology and Borel--Moore homology}

\subsubsection{Borel--Moore homology as cohomology of $\omega_X$}

On a derived $R$-scheme $X$ which is locally of finite presentation,  
\cite[Corollary 5.6.8]{GaitsgoryIndCoh} constructs  an ind-coherent dualising complex $\omega_X$ on $X$. We will use the same notation for the associated quasi-coherent complex, which suffices for our limited pruposes.
For an affine dg manifold $\oR \Spec A$, the dualising complex is given   by $\omega_A= \HHom_{A_0}(A, \Omega^n_{A_0})[n]$ when $A_0$ has dimension $n$. Since $\omega_X$ is a right $\sD$-module in the sense of \cite{GaitsgoryRozenblyumCrystal}, by \cite[Example \ref{DQvanish-stratDAex}]{DQvanish} it is a right $\sD$-module in our sense. %When the base ring $R$ is $\Cx$, the right de Rham complex  $\DR^r(\omega_A)$ can be identified, via Serre duality, the Riemann--Hilbert correspondence,  and Verdier duality, with Borel--Moore homology of $\Spec \H_0A$ with complex coefficients, as follows.

The following generalises Definition \ref{DRrdef}:
\begin{definition}\label{DRrdef2}
 Given a morphism $f \co X \to S$ of derived schemes and a right $\sD_{X/S}$-module $\sE$ on $X$,  define the hypersheaf $\oL\DR^r_{X/S}(\sE)$ on $X$ by
\[
 \oL\DR^r_{X/S}(\sE):= \sE\ten^{\oL}_{\sD_{X/S}}\sO_X.
\]
\end{definition}

\begin{lemma}\label{BMlemma} 
For  a quasi-compact quasi-separated derived scheme $X$ locally of finite presentation over $\Cx$, with underived truncation $\pi^0X$, we have
\[
 \H_d^{BM}(\pi^0X(\Cx),\Cx)\simeq  \bH^{-d}(X, \oL\DR^r(\omega_X)). 
\]
\end{lemma}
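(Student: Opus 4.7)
The strategy is to identify the right de Rham complex $\oL\DR^r(\omega_X)$, after analytification, with the topological Verdier dualising complex $\omega^{\mathrm{top}}$ on $\pi^0X(\Cx)_{\an}$. Since Borel--Moore homology is by definition $\H^{BM}_d(Y,\Cx) = \H^{-d}(Y,\omega^{\mathrm{top}}_Y)$, and since analytification preserves hypercohomology by GAGA-type arguments for bounded complexes with constructible cohomology, this would give the claimed equivalence.

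First, I would establish the identification in the smooth affine case. For smooth $Y = \Spec B$ of relative dimension $n$ over $\Cx$, the Spencer (Koszul) resolution of $\sO_Y$ as a left $\sD_Y$-module gives
\[
\omega_Y \ten^{\oL}_{\sD_Y} \sO_Y \simeq \Omega^{\bullet}_{Y/\Cx}[2n],
\]
since $\omega_Y = \Omega^n_Y[n]$ and contraction with $\Omega^n_Y$ identifies $\Lambda^p T_Y$ with $\Omega^{n-p}_Y$. The analytic Poincar\'e lemma then yields $(\Omega^{\bullet}_Y[2n])^{\an} \simeq \Cx_{Y^{\an}}[2n]$, and for a complex manifold of real dimension $2n$ this is precisely $\omega^{\mathrm{top}}_{Y^{\an}}$.

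Second, reduce the general case to the smooth affine case. Both sides satisfy Zariski (even \'etale) descent: the right side because hypercohomology on $X$ does, and the left side because $R\Gamma(-,\omega^{\mathrm{top}})$ does. The qcqs hypothesis provides a finite affine cover, so by Mayer--Vietoris induction it suffices to treat $X = \Spec A$ with $A$ a lfp CDGA. Any such $X$ admits a presentation as a derived zero locus $X = Y \by^h_V \{0\}$ with $Y$, $V$ smooth affine. Then $\omega_X \simeq i^!\omega_Y$ for the closed immersion $i \co X \to Y$, while on the analytic side $\omega^{\mathrm{top}}_{\pi^0 X(\Cx)_{\an}} \simeq i^!_{\mathrm{top}}\omega^{\mathrm{top}}_{Y^{\an}}$ by the six-functor formalism. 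A Koszul resolution of $i_*\sO_X$ as a left $\sD_Y$-module then expresses both $\oL\DR^r(\omega_X)$ and $\omega^{\mathrm{top}}_{\pi^0 X(\Cx)_{\an}}$ through the same formula applied to $\omega^{\mathrm{top}}_{Y^{\an}}$ and analytic conormal data, giving the identification.

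The main obstacle is checking compatibility across the derived structure: one must verify that the Koszul/Spencer-style computation of $\oL\DR^r(\omega_X)$ on the derived scheme $X$ matches the analytic $i^!$ on the underlying classical analytic space $\pi^0 X(\Cx)_{\an}$. This amounts to a derived Riemann--Hilbert comparison for the dualising sheaf, which one expects to follow either by appealing to an existing Riemann--Hilbert correspondence in the derived setting or by an explicit verification using that the relevant Koszul complex built from a regular sequence $f_1,\dots,f_r$ on $Y^{\an}$ resolves the constant sheaf on the analytic zero locus up to the expected shift $[-2r]$, matching the D-module computation on the algebraic side.
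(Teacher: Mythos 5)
Your overall architecture --- analytify, identify $\oL\DR^r(\omega_X)^{\an}$ with the Verdier dualising complex of $\pi^0X(\Cx)_{\an}$, handle the smooth case by the Spencer resolution plus the Poincar\'e lemma, and reduce the general case to a closed embedding into a smooth ambient scheme --- matches the paper's, and the smooth computation and the descent reduction are fine (modulo the minor point that a general lfp derived affine need not be a two-term zero locus $Y\by^h_V\{0\}$; the paper only assumes a derived closed immersion into a smooth $X^0$). The genuine gap is in your final step. You propose to close the argument "using that the relevant Koszul complex built from a regular sequence $f_1,\dots,f_r$ on $Y^{\an}$ resolves the constant sheaf on the analytic zero locus up to the expected shift $[-2r]$". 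But in every case of interest the sequence is \emph{not} regular --- if it were, $X$ would be an underived local complete intersection and the derived structure would be invisible --- and $\pi^0X(\Cx)$ then has excess dimension, so no such shift formula can hold. The entire content of the lemma is concentrated in this step: one must show that $\oR i_*(\omega_X\ten^{\oL}_{\sD_X}\sO_X)$, which is computed from the non-exact Koszul complex and genuinely depends on the derived structure of $X$, nevertheless has the same hypercohomology as $\oR\bar{i}_*\bar{i}^!\bD_{X^0(\Cx)}$, which sees only the closed subset $\pi^0X(\Cx)$.

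The paper supplies this in two stages that your sketch does not. First, it invokes Saito's identification of $\oR\bar{i}_*\bar{i}^!\bD_{X^0(\Cx)}$ with the right de Rham complex of the analytic local cohomology module $\oR\Gamma_{[\pi^0X(\Cx)]}\omega^{\an}_{X^0}$; this Riemann--Hilbert-type statement for a regular holonomic module is what does the work your regular-sequence claim was meant to do, and it makes no equidimensionality assumption. Second, it compares that analytic local cohomology module with the algebraic object $i_*(\omega_X\ten^{\oL}_{\sD_{X/X^0}}\sO_X)$ by passing to $\sO_{X^0}$-linear duals: both duals are completions of $\omega_{X^0}^*$ along the ideal of $\pi^0X$ (using that $\DR(\sO_X/\sO_{X^0})$ is the formal completion of $\sO_X$ along $\pi^0X$ for a derived closed immersion), and the algebraic/analytic comparison of completions is then carried out along the lines of Hartshorne's algebraic de Rham theory. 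Without some version of both inputs your argument does not close: "a derived Riemann--Hilbert comparison for the dualising sheaf" is essentially the statement being proved, not something available to appeal to.
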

\begin{proof}[Proof (sketch)]
We use the characterisation of Borel--Moore homology $\H_d^{BM}(\pi^0X(\Cx),\Cx)$ as cohomology $\H^{-d}(\pi^0X(\Cx)_{\an},\bD_{\pi^0X(\Cx)})$ of the $\Cx$-dualising complex $\bD_{\pi^0X(\Cx)}$ on the analytic site of $\pi^0X$. 
 Assume for simplicity that $X$ admits a derived closed immersion $i \co X \to X^0$ with $X^0$ smooth. For the associated closed immersion $\bar{i} \co \pi^0X \to X^0$, it suffices to show that the complexes $\oR \bar{i}_*\bD_{\pi^0X(\Cx)} \simeq    \oR \bar{i}_*\bar{i}^!\bD_{X^0(\Cx)} $ (on the analytic site of $X^0$) and $\oR i_* \DR^r(\omega_X) \simeq \oR i_* (\omega_X \ten^{\oL}_{\sD_X}\sO_X)$ (on the Zariski site of $X^0$)  have isomorphic hypercohomology.

 By \cite[6.1.2]{saitoDmodsAnalyticSpaces}, the  complex $\oR \bar{i}_*\bar{i}^!\bD_{X^0(\Cx)} $ is given by the right de Rham complex
\[
(\oR\Gamma_{[\pi^0X(\Cx)]}\omega_{X^0}^{\an})\ten_{\sD_{X^0}^{\an}}^{\oL}\sO_{X^0},
\]
where $ \Gamma_{[\pi^0X(\Cx)]}M = \LLim\hom_{\sO_{X^0}^{\an}}( \sO_{X^0}^{\an}/\sI_{\pi^0X}^{\an} ,M)$. Meanwhile, we can write
\[
 \omega_X \ten^{\oL}_{\sD_X}\sO_X \simeq \omega_X \ten^{\oL}_{\sD_X}\cDiff(i^{-1}\sO_{X^0}, \sO_X) \ten^{\oL}_{i^{-1}\sD_{X^0}}i^{-1}\sO_{X^0},
\]
and $\omega_X \ten^{\oL}_{\sD_X}\cDiff(i^{-1}\sO_{X^0}, \sO_X) \simeq  \omega_X \ten^{\oL}_{\sD_{X/X^0}}\sO_X$, where $\cDiff$ denotes the sheaf of differential operators.

It thus suffices to show that the complex $\oR\Gamma_{[\pi^0X(\Cx)]}\omega_{X^0}^{\an} $ of analytic right $\sD$-modules on $X^0$ is the analytification of the complex  $i_* (\omega_X \ten^{\oL}_{\sD_{X/X^0}}\sO_X)$ of algebraic $\sD$-modules.

Now, since $\omega_X \simeq \oR\hom_{i^{-1}\sO_{X^0}}(\sO_X, i^{-1}\omega_{X^0})$, we have 
\[
 \oR\hom_{\O_{X^0}}( i_* (\omega_X \ten^{\oL}_{\sD_{X/X^0}}\sO_X), \sO_{X^0})\simeq  i_*\oR\hom_{\sD_{X/X^0}}(  \sO_{X}\ten_{i^{-1}\sO_{X^0}} i^{-1}\omega_{X^0}, \sO_X).
\]
Since $i$ is  a derived closed immersion, the left de Rham complex $\DR(\sO_X/\sO_{X^0})$ is quasi-isomorphic to  the completion of $\sO_X$ along $\pi^0X$,
so the last expression above is just the completion of $ \omega_{X^0}^*$ with respect to the ideal $ \sI_{\pi^0X}$. Meanwhile, the $\sO_{X^0}^{\an}$-linear dual of $\oR\Gamma_{[\pi^0X(\Cx)]}\omega_{X^0}^{\an} $ is the completion of $ (\omega_{X^0}^{\an})^*$ with respect to the analytic ideal $\sI_{\pi^0X}^{\an}$, permitting the desired comparison along the lines of \cite{HartshorneAlgDeRham}.
\end{proof}

\begin{remark}
More generally, to right any right $\sD$-module we may associate a complex  $\oL\DR^r(\sE)^{\an}$ of sheaves on $\pi^0X(\Cx)^{\an}$. This respects the six functor formalism via the comparison in \cite[Example \ref{DQvanish-stratDAex}]{DQvanish} between our ind-coherent right $\sD$-modules and those of \cite{GaitsgoryRozenblyumCrystal}.%, and we can also rewrite  $\oR\Gamma(X, \oL\DR^r(\omega_X))$ as $f_{\dR*}f^!_{\dR}R$ for $f \co X \to \Spec R$ and associated de Rham stack $f_{\dR} \co X_{\dR} \to \Spec R.
\end{remark}

\subsubsection{Reduction to Gorenstein derived schemes}\label{Gorensteinsn}

As in \S \ref{QME},
since the space $Q\cP(A, \nabla,-2)$ of $E_{-1}$ quantisations from Definition \ref{Qpoissdef} consists of solutions of the quantum master equation, any quantisation $S \in Q\cP(A, \nabla,-2)$ gives rise to a $0$-cocycle $e^S$ in the right de Rham complex $\DR^r(A, \nabla) \brh$. 

 If we write $\dim A$ for the virtual dimension of $\oR\Spec A$ over $R$, then a morphism $(A, \nabla)\to \omega_A[-\dim A]$ of right $\sD$-modules would then give us a class of degree $\dim A$ in Borel--Moore homology associated to $e^S$. However, $(-2)$-shifted symplectic derived schemes are seldom Gorenstein, so $\omega_A$ will not be a line bundle in the cases which interest us. Instead, we now establish some fairly general circumstances in which  the right de Rham complex $\DR^r(A,\nabla)$ is quasi-isomorphic to a shift of $\DR^r(\omega_A)$. Beware that for the derived schemes we consider, the structure sheaf is unbounded, so  not ind-coherent, ruling out direct comparisons with the right crystals of \cite{GaitsgoryRozenblyumCrystal}. 

%%to explain shift, for proper $\dim =n$, need $\oR\Gamma(X, \omega_X)= \oR\Hom(\O_X, \omega_X) \simeq \oR\Gamma(X, \sO_X)^{\vee}$, so in degrees $[-n,0]$, and thus $\Omega^n_X[n]$. $\DR^r(\omega_X)$ is then $\DR(X)[2n]$.

\begin{example}
Consider the shifted cotangent space $T^*[-2]\bA^1$ of the affine line, corresponding to the CDGA $A:=R[x, \xi]$ with $x\in A_0$, $\xi\in A_2$. This has a natural $(-2)$-shifted symplectic structure $dx\wedge d\xi$, and corresponding  non-degenerate $(-2)$-shifted Poisson structure $\pi$ determined by the equation $\{x,\xi\}_{\pi}=1$. The essentially unique right connection $\nabla$ compatible with $\pi$ is given by $\nabla(a \pd_x + b\pd_{\xi})= -\frac{\pd a}{\pd x}- \frac{\pd b}{\pd \xi}$, and then contraction with $dx \wedge d\xi$ defines an isomorphism $\alpha \co \DR^r(A, \nabla)\to \DR(A)$, which in turn is quasi-isomorphic to the base ring $R$.
 
On the other hand, $\omega_A \cong ( R[x,\xi,\xi^{-1}]/R[x,\xi])dx \wedge d\xi[1]$, %%idea that $\xi^{-1}d\xi$ has cochain degree $0$,
and then $\DR^r(\omega_A) \cong (\DR(A)[\xi^{-1}]/\DR(A))[1]$, %%in $\DR$, $dx \wedge d\xi$ has degree $0$.
which is quasi-isomorphic to $R[2]$ via the element $\xi^{-1}d\xi$, %%degree $-2+1-1$ (last the shift).
 so we do have $\DR^r(A, \nabla) \simeq \DR^r(\omega_A)[-2]$, which tallies well with $T^*[-2]\bA^1$ having virtual dimension $2$.

 However, comparison of the Hodge filtrations shows that the homotopy right $\sD$-module morphism $A \to\omega_A[-2]$ giving rise to this quasi-isomorphism is not a quasi-isomorphism, and is  in fact zero on the underlying $A$-modules, since the generator $\pd_x\pd_{\xi}$ of  $\DR^r(A, \nabla)$ lies in $F_2 \setminus F_1$, while the generator $ (\xi^{-1}d\xi\wedge dx) \pd_x$  of $\DR^r(\omega_A)$ lies in $F_1\setminus F_0$. 
 
%%simplify to check: $\DR^r(\omega_{k[x]})= k[x,dx][2]\simeq k[2]$, while $\DR^r(\omega_{k[\xi]})$ is $\DR^r(k[\xi^{-1}]d\log \xi)$, so $\xi^{-1}k[\xi^{-1}][1] \xra{d}k[\xi^{-1}]d\log \xi$, and only $d\log \xi$ survives, which is degree $0$. Thus $k[2]$ overall. 

Under the isomorphism $\DR^r(A, \nabla) \simeq \DR^r(\omega_A)[-2]$, the quantisation corresponding to the constant power series $dx\wedge d\xi$ is then simply $\hbar \pi =\hbar \pd_x\pd_{\xi}$, giving virtual fundamental class $\exp(\hbar \pd_x\pd_{\xi})= 1+\hbar \pd_x\pd_{\xi}\sim\hbar \pd_x\pd_{\xi} \in \H^0\DR^r(A, \nabla)\brh$. This corresponds to the class $\hbar \in \H^0\DR(A)\brh$ under the isomorphism $\alpha$ above, and hence to 
\[
\hbar[\bA^1] \in \H^{BM}_2(\bA^1, \Cx\brh)
\]
when $R=\Cx$, via Lemma \ref{BMlemma}. In general for $X$ smooth over $\Cx$, a similar argument (cf. Proposition \ref{kappaprop} below)
 gives the quantisation class  of a shifted cotangent complex as
\[
[T^*[-2]X]= \hbar^{\dim X}[X] \in \H^{BM}_{2\dim X}(X, \Cx\brh).
\]
%
%%For $i \co \pi^0X \to X$, note that $i^!A\simeq \oR\HHom_A(R[x],A) \simeq \cocone (A \xra{\xi} A[-3]) \simeq R[x][-3] \simeq \omega_{\H_0A}[-4]$. Thus $\DR^r(i^!A, \nabla) \simeq \DR^r(\omega_{\H_0A})[-4] \simeq \DR^r(\omega_A)[-4]$, so t$[-2]$ from above.
\end{example}

The reduction in this example  of the right de Rham cohomology of a derived scheme to the right de Rham cohomology of a line bundle on a Gorenstein scheme generalises to the following lemma. 
In the lemma, we will be transforming $\sD$-modules using the $\sD_A-\sD_B$-bimodule $\cDiff(B,A)$ of differential operators from $B$ to $A$, which is isomorphic to $A\ten_B\sD_B$ whenever $A$ and $B$ are affine dg manifolds. %%also note that via inclusions $\sD_{A/B} \to \sD_A$ and $A \to \cDiff(B,A)$ we get IM $\sD_A\ten_{\sD_{A/B}}A\to \cDiff(B,A)$, given by $f\ten a \mapsto f(a\cdot-)$. 

%We now consider a morphism $f \co (X^0,\sO_X) \to (Y^0,\sO_Y)$ of affine dg manifolds which is quasi-smooth in the traditional sense that $X^0$ is smooth over $Y^0$ and the sheaf $\sO_{X,\#}$ of graded algebras given by forgetting the differential  is locally freely generated over $f^{-1}\sO_{Y,\#}\ten_{f^{-1}\sO_{Y,0}}\sO_{X,0}$ by a sheaf of finite projective graded modules; this should not be confused with the increasingly common usage of ''quasi-smooth`` to mean virtually LCI. Note that for $X$ to be an affine dg manifold over $R$ says precisely that  the morphism $X \to \Spec R$ is quasi-smooth  in our sense. 

\begin{lemma}\label{DRrGorensteinLemma} %%have dropped determinants/Berezinians for now.
 Take a morphism $f \co X \to Y$ of dg $R$-manifolds   in the sense of \cite{Quot}, such that $\sO_X$ is locally freely generated as a graded algebra over $f^{-1}\sO_Y$ by $s$ local generators   in homological degree $2$.  

 Then  the construction $\DR^r_{X/Y}(-)$ defines a functor from homotopy  right $\sD_X$-modules $(\sF, \nabla)$ to homotopy  right $\sD_Y$-modules, and there is a natural quasi-isomorphism
 \[
\DR^r_{X/Y}(\sF,\nabla)\ten_{\sO_Y}\sO_X \to \hom_{\sO_X}(\Omega^s_{X/Y},\sF)[s]  %\HHom_{\sO_X}(\det \Omega^1_{X/Y},\sF)[\dim Y-\dim X],  %%no need to derive tensor products, as flat
 \]
of $\sO_X$-modules.
% where $\dim$ denotes virtual dimension.
 \end{lemma}
 \begin{proof}
 For a homotopy right $\sD_X$-module $(\sF,\nabla)$,  we have
\[
\DR^r_{X/Y}(\sF,\nabla)= \bigoplus_{p}\hom_{\sO_X}(\Omega^p_{X/Y},\sF)[p]
\]
with differential induced by  $D^{\nabla}$; this is a natural subcomplex of $\DR^r_{X/R}(\sF,\nabla) $. 
We can rewrite this as $\sF\ten_{\nabla,\sD_X}^{\oL}\cDiff(\sO_Y,\sO_X)$, and then the right $\sD_Y$-module structure on $\cDiff(\sO_Y,\sO_X)$ induces a homotopy right $\sD_Y$-module structure on $ \DR^r_{X/Y}(\sF,\nabla)$.

%If $s=\dim X- \dim Y$, then $\sO_X$ locally  has $s$ generators in degree $2$,  so 
The hypotheses ensure that $\Omega^p_{X/Y}= 0$ for $p>s$, giving a natural $\sO_Y$-linear map
\[
 \DR^r_{X/Y}(\sF,\nabla) \to \hom_{\sO_X}(\Omega^s_{X/Y},\sF)[s].
\]
% Moreover, we have $\Omega^s_{X/Y} \simeq  \det \Omega^1_{X/Y}[2s]$, so the map above becomes
% \[
%  \DR^r_{X/Y}(\sF,\nabla) \to \HHom_{\sO_X}(\det \Omega^1_{X/Y} ,\sF)[-s].
% \]
This in turn induces a map $\theta \co  \DR^r_{X/Y}(\sF,\nabla)\ten_{\sO_Y}\sO_X \to   \hom_{\sO_X}(\Omega^s_{X/Y},\sF)[s] %\HHom_{\sO_X}(\det \Omega^1_{X/Y},\sF)[-s]
$, and it remains to show that this is a quasi-isomorphism.

It follows immediately from Definition \ref{DRrdef} that the contraction map
\[
 \DR(\sO_X/\sO_Y) \ten_{\sO_Y} \DR^r_{X/Y}(\sF, \nabla) \xra{\lrcorner} \DR^r_{X/Y}(\sF, \nabla)
\]
is a chain map, and we can then characterise  $\HHom_{\sO_X}(\det \Omega^1_{X/Y},\sF)[-s]$ as the quotient of $ \DR^r_{X/Y}(\sF,\nabla)\ten^{\oL}_{\sO_Y}\sO_X$ by the action of the ideal $\sI$ in $\DR(\sO_X/\sO_Y) \ten_{\sO_Y}\sO_X $ generated by elements of the form $\{a\ten 1 - 1 \ten a ~:~ a \in \sO_X\}$ and $\{da \ten 1 ~:~ a \in \sO_X\}$. 

Since the question is now local in nature, we may assume that $\sO_X$ is freely generated as a sheaf of graded-commutative  algebras
over $\sO_Y$ by elements $\xi_i \in \sO_{X,2}$. The ideal $\sI$ is then generated by elements of the form $\xi_i\ten 1 - 1 \ten \xi_i $ and $d\xi_i \ten 1$. Now, the differential in $\DR(\sO_X/\sO_Y) \ten_{\sO_Y}\sO_X $ maps $\xi_i\ten 1 -1\ten \xi_i$ to $(d\xi_i +\delta \xi_i) \ten 1 -  \delta \xi_i \ten 1 = d\xi_i \ten 1$ (since $\delta \xi_i$ has degree $1$, so lies in $\sO_Y$), and these elements act freely commutatively on $ \DR^r_{X/Y}(\sF,\nabla)\ten^{\oL}_{\sO_Y}\sO_X$, so the quotient map $\theta$ above is indeed a quasi-isomorphism.
\end{proof}

\begin{proposition}\label{DRrGorensteinProp}
 In the setting of Lemma \ref{DRrGorensteinLemma},  the functor  $f_*^{\sD}:=\DR^r_{X/Y}(-)$  from homotopy  right $\sD_X$-modules to homotopy  right $\sD_Y$-modules  has an exact  left adjoint $f^*_{\sD}$  which   on the underlying $\sO_Y$ and $\sO_X$-modules is quasi-isomorphic to the functor $\sE \mapsto \Omega^s_{X/Y}\ten_{\sO_Y}\sE[-s]$. The unit $\id \to f_*^{\sD}f^*_{\sD}$ of the adjunction is a quasi-isomorphism. 
\end{proposition}
\begin{proof}
 Because the generators are in even degree, this follows in much the same way as the corresponding statement for pushforwards and pullbacks of right $\sD$-modules along smooth affine morphisms. 

The left $\sD_X$-module $\cDiff(\sO_Y, \sO_X) \cong \sD_X\ten_{\sD_{X/Y}}\sO_X $ is quasi-isomorphic to the right de Rham complex $\DR^r_{X/Y}(\sD_X)$, which  the description above shows is perfect as a left $\sD_X$-module, the direct sum being finite. Its dual $\HHom_{\sD_X}( \DR^r_{X/Y}(\sD_X), \sD_X)$ is the right $\sD_X$-module
\[
 \DR_{X/Y}(\sD_X)= (\bigoplus_{p} \Omega^p_{X/Y}\ten_{\sO_X}\sD_X [-p], d)
\]
given by the left de Rham complex of $\sD_X$, the latter regarded as a left $\sD_X$-module.  The left adjoint to $f_*^{\sD}$ is thus given by the functor
\begin{align*}
f^*_{\sD}\sE:= &\sE\ten_{\sD_Y} \oR\hom_{\sD_X}(\cDiff(\sO_Y, \sO_X), \sD_X)\\
&\simeq \sE\ten_{\sD_Y}\DR_{X/Y}(\sD_X),
\end{align*}
where the homotopy left $\sD_Y$-module structure on $ \DR_{X/Y}(\sD_X)$ is induced from the right $\sD_Y$-module structure on $\cDiff(\sO_Y, \sO_X)$.

%If $s=\dim X- \dim Y$,  then  
Our expression for $\DR_{X/Y}(\sD_X)$ amounts to giving a quasi-isomorphism $\DR_{X/Y}(\sD_X) \to \Omega^s_{X/Y}\ten_{\sD_{X/Y}}\sD_X[-s]  $, for the right $\sD_{X/Y}$-module structure on the line bundle $ \Omega^s_{X/Y}$ induced by the de Rham differential via the isomorphism $\Omega^s_{X/Y}\ten_{\sO_X}\sT_{X/Y}\cong \Omega^{s-1}_{X/Y}$. 
% For any right $\sD_Y$-module $\sE$, we therefore have ntural maps
% \[
% \sE\ten_{\sO_Y}\Omega^s_{X/Y}[-s]\to  \sE\ten_{\sO_Y}\DR_{X/Y}(\sD_X) \to \sE\ten_{\sD_Y}\DR_{X/Y}(\sD_X)
% \]
% of $\sO_X$-modules, and a local calculation twisting  the isomorphisms  $\sO_X\ten_{\sO_Y}\sD_Y \cong \cDiff(\sO_Y, \sO_X) \cong \sD_X\ten_{\sD_{X/Y}}\sO_X$ above ensures that the composite is a quasi-isomrphism.
%
Since we have a natural isomorphism $\sD_Y\ten_{\sO_Y}\Omega^s_{X/Y}[-s] \to \Omega^s_{X/Y}\ten_{\sD_{X/Y}}\sD_X[-s]$,
it follows that for 
any right $\sD_Y$-module $\sE$, we have a quasi-isomorphism
\[
\sE\ten_{\sD_Y}\DR_{X/Y}(\sD_X)\to \sE\ten_{\sO_Y}\Omega^s_{X/Y}[-s],
\]
%
% Since $\Omega^s_{X/Y} \simeq  \det \Omega^1_{X/Y}[2s]$,
% we have therefore shown that $f^*_{\sD}\sE \simeq \det \Omega^1_{X/Y}[\dim X-\dim Y]\ten_{\sO_Y}\sE$.
%
%We have therefore 
so we have constructed a natural quasi-isomorphism
\[
f^*_{\sD}\sE \to\Omega^s_{X/Y}\ten_{\sO_Y}\sE[-s].
\]

Using the formula of Lemma \ref{DRrGorensteinLemma}, we have
\[
 f^*_{\sD}f_*^{\sD}(\sF,\nabla)\simeq \Omega^s_{X/Y}[-s]\ten_{\sO_Y}\hom_{\sO_X}( \Omega^s_{X/Y},\sF)[s] \simeq \sF,
\]
so the unit of the adjunction is a quasi-isomorphism.
\end{proof}

\subsubsection{From quantisations to cohomology of $\omega_X$}

We now set about relating the right de Rham cohomology of $\omega_X$ to that of the determinant line bundle $\det \Omega^1_X$ on $X$. The determinant is a functor from perfect complexes to line bundles (i.e. dualisable perfect complexes), determined locally by the properties that $\det \sO_X =\sO_X$ and $\det \cone(\sG \to \sF)\simeq \det(\sG)^*\ten \det(\sF)$.

\begin{proposition}\label{DRrtoBMprop}
Take a dg $R$-manifold $X$ such that 
$\sO_X$ is  locally generated  in homological degrees $[0,2]$. Then there is a natural homotopy right $\sD_X$-module structure on the determinant line bundle $\det \Omega^1_X$. There is  a natural map 
\[
 \det \Omega^1_X[\dim X ] \to \omega_X
\]
in the derived category of homotopy right $\sD_X$-modules, where $\omega_X$ is the dualising complex,  
which induces a quasi-isomorphism
\[
 \DR^r_X(\det \Omega^1_X) \to \DR^r_X(\omega_X)[-\dim X],
\]
of right de Rham complexes.
\end{proposition}
In particular,  via Lemma \ref{BMlemma} this induces an isomorphism $\bH^i (X,\DR^r_X(\det \Omega^1_X)) \to \H^{BM}_{\dim X-i}(\pi^0X)$ to Borel--Moore homology of the underived truncation $\pi^0X$.  
\begin{proof}
The functor $ f_*^{\sD}:=\DR^r_{X/Y}(-)$ from Proposition \ref{DRrGorensteinProp} has a natural derived  right adjoint $f^!_{\sD}$, simply given by $\hom_{\sD_Y^{\op}}(\cDiff(\sO_Y, \sO_X),-)$, or on the underlying $\sO_Y$-modules by $\hom_{\sO_Y}(\sO_X,-)$; the dualising complexes then automatically satisfy $ f^!_{\sD}\omega_Y \simeq \omega_X$.

Applying Proposition \ref{DRrGorensteinProp} to the right $\sD_Y$-module $\omega_Y$, we have a quasi-isomorphism
$
\omega_Y \to f_*^{\sD}f^*_{\sD}\omega_Y
$
of homotopy right $\sD_Y$-modules, and in particular a homotopy inverse
\[
 f_*^{\sD}f^*_{\sD}\omega_Y \to \omega_Y. 
\]
By the adjunction above, this gives us a map 
\[
 f^*_{\sD}\omega_Y \to f^!_{\sD}\omega_Y\simeq\omega_X
\]
of  homotopy right $\sD_X$-modules. 

Letting $\sO_Y\subset \sO_X$ be the dg $\sO_{X,0}$-subalgebra generated by $\sO_{X,1}$,
on applying the functor $ f^*_{\sD}$ our map recovers the quasi-isomorphism $\omega_Y \simeq  f_*^{\sD}\omega_X$ from the proof of Lemma \ref{BMlemma}, and hence $\DR^r( f^*_{\sD}\omega_Y) \simeq \DR^r(\omega_X)$.

It only remains to show that $ f^*_{\sD}\omega_Y \simeq   \det \Omega^1_X[\dim X ]$. 
By Proposition \ref{DRrGorensteinProp}, we know that 
\[
 f^*_{\sD}\omega_Y \simeq \omega_Y\ten_{\sO_Y}\Omega^{\dim X-\dim Y}_X[\dim Y-\dim X]\simeq \omega_Y\ten_{\sO_Y}\det \Omega^1_{X/Y}[\dim X-\dim Y ], %%switch of signs not a typo as $\Omega^1$ in degree $2$.
 \]
 and since 
 the dg manifold $Y$ is virtually LCI, it follows that  the dualising complex $\omega_Y$ is quasi-isomorphic to $ \det \Omega^1_{Y}[\dim Y]$, via the  trace map $\det \Omega^1_{Y}[\dim Y] \to  \det \Omega^1_{X^0}[\dim X^0]$. Explicitly, if we write $\sE:= \sO_{Y,1}$ with rank $r$ over $X^0$, then  as a graded algebra, $\sO_Y$ is the exterior algebra of $\sE$ over $\sO_{X^0}$, and its $\sO_{X^0}$-linear dual is $\det \sE^* \ten_{\sO_{X^0}}\sO_Y[-r] \cong \det \Omega^1_{Y/X^0}[-r]$, giving
\begin{align*}
 \omega_Y &\simeq \hom_{\sO_{X^0}}(\sO_Y, \omega_{X^0})\\
& \simeq  \omega_{X^0}\ten_{\sO_{X^0}} \det  \Omega^1_{Y/X^0}[-r]\\
&\simeq  (\det \Omega^1_{X^0})\ten_{\sO_{X^0}} \det  \Omega^1_{Y/X^0}[\dim X^0-r]\\
&\cong \det  \Omega^1_{Y}[\dim Y].
\end{align*}
%$\det \Omega^1_{Y}[\dim Y]\simeq \omega_{X^0}\ten_{\sO_{X^0}}\det \sE^*\ten_{\sO_{X^0}}\sO_Y[-r]$. This is isomorphic to $\hom_{\sO_{X^0}}(\sO_Y, \omega_{X^0}) $, the homomorphism being given by multiplication in $\sO_Y$ followed by contraction of $ \det \sE^*$ against  $\L^r\sE[r] \subset \sO_Y$.  
\end{proof}

\begin{remark}\label{DRrGorensteinRmk}
 Although Proposition \ref{DRrtoBMprop} is phrased for dg schemes rather than derived schemes and appears to depend on the choice of model, there is a model independent formulation. If we have a morphism $g \co Y\to Y'$ of virtually LCI derived schemes, or even derived formal schemes, for which the map on reduced loci $(\pi^0Y)^{\red} \to (\pi^0Y')^{\red}$ is an isomorphism, then the functor $ g_*^{\sD}:=\DR^r_{X/Y}(-)$ gives a  quasi-equivalence on the corresponding categories of homotopy right $\sD$-modules, essentially by \cite{GaitsgoryRozenblyumCrystal} and \cite[Example \ref{DQvanish-stratDAex}]{DQvanish}. This implies that  the homotopy inverse functor is both a left and right adjoint  $g^*_{\sD}\simeq g^!_{\sD}$. Thus Proposition \ref{DRrGorensteinProp} remains valid if we replace $Y$ with any  virtually LCI derived  ind-scheme equipped with a map $X \to Y$ which is an isomorphism on reduced loci, provided we replace  $\Omega^s_{X/Y}[-s]$ with $\det \Omega^1_{X/Y}[\dim X-\dim Y]$.
 
 In particular, we can replace $Y$ with any of the formal schemes given by completing iterated products of $X^0$ along the diagonal copy of $\pi^0X$. Since these resolve the de Rham stack $X_{\dR}\co A \mapsto X(\H_0A^{\red})$ of $X$ and there is a canonical map $q \co X \to X_{\dR}$ (see \cite{simpsonHtpy}),   this allows us to interpret the construction of Proposition \ref{DRrGorensteinProp} as the left adjoint $q^*_{\sD} $ to the functor $q_*^{\sD} $ from right $\sD_X$-modules to right $\sD_{X_{\dR}}$-modules, for any derived Deligne--Mumford stack $X$ with cotangent complex generated in chain degrees $[0,2]$. On dualising complexes, we have $ q^*_{\sD}\omega_{X_{\dR}} \simeq \det \Omega^1_X[\dim X]$, while the right adjoint  $q^!_{\sD}$ sends $\omega_{X_{\dR}}$ to $\omega_X$. 
 
 The proof of Proposition \ref{DRrtoBMprop} will then adapt when phrased in terms of the canonical morphism $q \co X \to X_{\dR}$.
 
For an interpretation of these functors closer in spirit to the approach of \cite{GaitsgoryRozenblyumCrystal}, note that   \cite[Example \ref{DQvanish-stratDAex}]{DQvanish} effectively gives an interpretation of our right $\sD_X$-modules as corresponding to 
 coherent sheaves with respect to $!$-pullback on the derived stack $X_{\mathrm{strat}}\co A \mapsto \im(X(A) \to X(\H_0A^{\red}))$ of stratifications instead of living on $X_{\dR}$ as in \cite{GaitsgoryRozenblyumCrystal}. For the natural map $p \co X_{\mathrm{strat}} \to X_{\dR}$, there are pushforward and pullback functors $p_! \dashv p^!$, but the discussion above suggests that we also have a further left adjoint $p^* \dashv p_!$ under the conditions of Proposition \ref{DRrtoBMprop}.

%%Worth noting that once we get down to $\hat{X}^0$, we're in usual setting for right $\sD$-mods.
 \end{remark}

\begin{corollary}\label{DRrtoBMcor}
Take a dg $R$-manifold $X$ such that 
$\sO_X$ is  locally generated  in homological degrees $[0,2]$. Then for any homotopy right $\sD_X$-module structure $\nabla$  on $\sO_X$, there is a corresponding homotopy left $\sD_X$-module structure on the determinant line bundle $\det \Omega^1_X$, and a natural map
\[
\sO_X[\dim X ] \to   (\det \Omega^1_X)^*\ten_{\sO_X} \omega_X
\]
in the derived category of  homotopy right $\sD_X$-modules,  inducing a quasi-isomorphism
\[
 \DR^r_X(\sO_X,\nabla)  \to \DR^r_X((\det \Omega^1_X)^*\ten_{\sO_X}\omega_X)[-\dim X],
\]
of right de Rham complexes.
\end{corollary}
% In particular,  when $R=\Cx$  this induces a morphism $\bH^i (X,\DR^r_X(\sO_X,\nabla)) \to \H^{BM}_{\dim X-i}(\pi^0X, \bM)$ via  LEmma \ref{BMlemma} to Borel--Moore homology of the underived truncation $\pi^0X$ with coefficients in a rank $1$ local system $\bM$ with $\bM\ten_{\Cx}\sO_{\pi^0X}^{\an} \simeq  (\det \Omega^1_X)^*\ten_{\sO_X}\sO_{\pi^0X}^{\an}$.  %%AIN'T true for alg $\sD$-mods --- no GAGA
\begin{proof}
 Since  $\det \Omega^1_X$ has a natural  homotopy right $\sD_X$-module structure by Proposition \ref{DRrtoBMprop}, a homotopy right $\sD_X$-module structure on $\sO_X$ automatically gives a homotopy left $\sD_X$-module structure on $(\det \Omega^1_X)^*=\hom_{\sO_X}(\det \Omega^1_X,\sO_X) $, with the same reasoning as in the classical case. Tensoring the map of Proposition \ref{DRrtoBMprop} with this left $\sD_X$-module then gives the desired map.
\end{proof}

\begin{corollary}\label{VFCCor}
 For $R$ discrete (i.e. $R=\H_0R$), take a  dg $R$-manifold $X$ such that 
$\sO_X$ is  locally generated  in homological degrees $[0,2]$, equipped with a non-degenerate $(-2)$-shifted Poisson structure $\pi$. 

There is then an essentially unique  flat right connection $\nabla$ on $\sO_X$  admitting quantisations  $S \in Q\cP(\sO_X, \nabla,-2)$ of $\pi$; these quantisations are  unique up to addition by locally constant functions $\hbar^2 R\brh$, and the space of such quantisations is discrete. 

To each such quantisation $S$, there is an associated  virtual fundamental class
\[
 [e^S] \in  \H^{- \dim X}(X,\DR^r_X((\det \Omega^1_X)^*\ten_{\sO_X}\omega_X))\brh
\]
in Borel--Moore homology with coefficients in $(\det \Omega^1_X)^*\brh$, equipped with a homotopy left $\sD$-module structure determined by $\nabla$. 
\end{corollary}
\begin{proof}
 Since $\pi$ is non-degenerate, the leading term $\pi_2$ induces a quasi-isomorphism $ \pi_2^{\flat} \co \Omega^1_X  \to \hom(\Omega^1_X, \sO_X)[2]$, and hence a quasi-isomorphism $ \det \Omega^1_X \simeq (\det \Omega^1_X)^*$ of line bundles.  This implies that  $u \co (\det \Omega^1_X)^{\ten 2} \simeq \sO_X$, so has a homotopy left $\sD_X$-module structure. This in turn induces a homotopy left $\sD_X$-module structure on $\det \Omega^1_X$, essentially given by setting $\nabla(\lambda):= \half\lambda^{-1}u^{-1}(du(\lambda^2))$.

Thus $\sO_X$ admits a homotopy right $\sD$-module structure, so the conditions of   Corollary \ref{uniqueconncor} are satisfied compatibly on open affine subschemes,  giving us a (possibly different) flat right connection $\nabla$ on $\sO_X$ and admitting   quantisations parametrised by $\hbar^2\H^0(X, \DR(X))\brh \cong \H^0(\pi^0X, \hbar^2 R\brh)$. The space of quantisations is discrete because $ \H^{<0}(X, \DR(X))=0$, so the homotopy groups are all trivial.

The map $Q\cP(A, \nabla,-2) \to 1+\hbar\H^0(X,\DR^r_X(\sO_X,\nabla))\brh$   from \S \ref{QME} sending $S$ to $[e^S]$  then combines with
Corollary \ref{DRrtoBMcor} to  give a class $[e^S]$ in twisted Borel--Moore homology.
\end{proof}
 
\begin{remark}\label{VFClocsysrmk} 
 When $R=\Cx$,  Corollary \ref{VFCCor}  combines with the argument of Lemma \ref{BMlemma} to give us a class in $\H^{BM}_{\dim X}(\pi^0X, \bM \brh)$, the Borel--Moore homology of the underived truncation $\pi^0X$ with coefficients in a rank $1$ local system $\bM$ with $\bM\ten_{\Cx}\sO_{\pi^0X}^{\an} \simeq  (\det \Omega^1_X)^*\ten_{\sO_X}\sO_{\pi^0X}^{\an}$. The   map from algebraic to analytic de Rham cohomology still exists with twisted coefficients, and although it need not be an isomorphism in general, it turns out that in this case it is, because Proposition \ref{EulerPropNew} and Remark \ref{QIMconnRmk} below  imply that $\bM$ is a $\mu_2$-torsor, so locally trivial in the  \'etale topology.
  %%since iterated integrals don't converge.  
\end{remark}

\begin{remark}\label{DRrtoBMRmk}
 Although Corollary \ref{VFCCor} is phrased for dg schemes, all we really need is a setting where Proposition \ref{DRrtoBMprop} holds, providing us with a canonical flat right connection on   $\det \Omega^1_X$ and a morphism $\det \Omega^1_X \to \omega_X[-\dim X] $ inducing a quasi-isomorphism on right de Rham complexes.  This should work for any $(-2)$-shifted symplectic derived Deligne--Mumford stack, since by \cite[Theorem 5.18]{BBBJdarboux} or \cite[\S 3.6]{BouazizGrojnowski}, these are all  locally of the form in Corollary \ref{VFCCor},  
and Remark \ref{DRrGorensteinRmk} explains how to establish the required maps of right $\sD$-modules in that generality.

Lemma \ref{QIMconnlemma} and Remark \ref{QIMconnRmk} below then say that the  homotopy left $\sD$-module structure on $(\det \Omega^1_X)^*$ determined by $\nabla$ in Corollary \ref{VFCCor} is just the essentially unique such structure  compatible with the orthogonal  inner product induced by the first term $\pi_2$ of the Poisson structure.
\end{remark}

%
%%could  add countably many geneartors, and complete w.r.t. $\H_0A$, and then we'd probably be able to glue on the nose. that would work well enough. Want to use $E_{\R}$ statement in that case, but that's suggesting we could get away with something finite, at least in quasi-compact semi-separated case: only finitely many intersections to consider, so could find a stage in the tower which works for everything. 

\subsection{Strict Poisson structures}

%%have cut previous subsubsection

\subsubsection{Quantisations of strict Poisson structures}

 \begin{definition}
 Say that a $(-2)$-shifted Poisson structure $\pi$ on a CDGA $A$ is \emph{strict} if $\pi=\pi_2$. In particular, this makes $A[-2]$ a DGLA rather than just an $L_{\infty}$-algebra.

Say that a    $(-2)$-shifted Poisson structure $\pi$ is strictly non-degenerate if the map $\pi_2^{\flat}\co \Omega^1_A \to   \HHom_A(\Omega^1_{A},A)[2]$ is an isomorphism (not just a  quasi-isomorphism).
\end{definition}

% Given a dg $R$-manifold $X$ equipped with a strictly non-degenerate $(-2)$-shifted Poisson structure, observe that $X$ must necessarily be locally generated in homological degrees $[0,2]$, since negative degrees are not permitted and non-degeneracy of $\pi_2$ forces the same numbers of  generators in degrees $i$ and $2-i$. When $R$ is discrete (i.e. $R=\H_0R$), this also means that restricting $\pi_2$ to the vector bundle $\sE:=\sO_{X,1}$ on $X^0$ gives an orthogonal inner product $Q$. %%%this duplicates {StrictPoissonLemmaNew} below
% 
% When $\pi$ is moreover strict, we will see that the differential $\delta$ on $\sO_X$ is just given by $[\phi,-]_{\pi}$ for some element $\phi \in \sE$ for which $Q(\phi,\phi)$ is locally constant. Then $\pi^0X \subset X^0$ is the vanishing locus of $\phi$, so we may assume that $Q(\phi,\phi)=0$ by discarding components of $X^0$ on which it is some other constant.

\begin{lemma}\label{StrictPoissonLemmaNew}
 Let $X$ be a dg $R$-manifold equipped with a strictly non-degenerate strict $(-2)$-shifted Poisson structure $\pi$. Then  $\pi$ determines  a symmetric inner product $Q$  on the vector bundle $\sE:=\sO_{X,1}$ on $X^0$ and a global section $\phi \in \Gamma(X^0,\sE)$ with $dQ(\phi,\phi)=0 \in \Gamma(X^0, \Omega^1_{X^0})$. The underived truncation $\pi^0X \subset X^0$ is the vanishing locus of $\phi$.

Moreover, $\sO_X$ is locally generated in homological degrees $[0,2]$, and the Poisson structure $\pi$  determines an isomorphism $\sO_{X,2}/\L^2\sE \cong \sT_{X^0}$. If they exist, sections of the surjection $\sO_{X,2} \to \sT_{X^0} $ then correspond to (not necessarily flat) left connections $\nabla_{\sE}$ on $\sE$ which are compatible with $Q$.
\end{lemma}
\begin{proof}
Giving such a Poisson structure on $X$ is equivalent to defining a shifted Lie bracket $\{-,-\}\co \sO_X^{\ten 2} \to \sO_X[2]$ which respects the differential $\delta$,  is a biderivation with respect to the multiplication on $\sO_X$, and satisfies a strict non-degeneracy condition. The symmetric inner product $Q$ is then just the restriction of $\{-,-\} $ to $\sE$, while non-degeneracy of $Q$ implies that the differential $\delta \co \sE \to \sO_{X^0}$  is given by $Q(\phi,-)$ for a unique element  $\phi \in \Gamma(X^0,\sE)$. It also implies  that the sheaf of functions on the vanishing locus of $\phi$ is just  $\sO_{X^0}/\delta\sE$, but this is $\sO_{\pi^0X}$.  

For degree reasons, we must have $\{\sO_{X,0},\sE\}=0$ and hence $\{\sO_{X,0},\L^2\sE\}=0 $, for $\L^2\sE \subset \sO_{X,2}$. The map $\sO_{X,2} \to \sT_{X^0}$ sending an element $z$ to the derivation $\{z,-\}$ thus descends to a map $\sO_{X,2}/\L^2\sE \to \sT_{X^0}$; strict  non-degeneracy of $\pi$ implies that this is an isomorphism and that there are no generators in degrees higher than $2$. 

Given a section $\sigma \co \sT_{X^0} \to \sO_{X,2}$ of that surjection, there is a left connection $\nabla_{\sE} \co \sE \to \sE\ten_{\sO_{X,0}}\Omega^1_{X^0}$ determined by the property that for $v \in \sT_{X^0}$ and $e \in \sE$, we have $v \lrcorner \nabla_{\sE}(e)=\{v,e\} \in \sE$; this satisfies $dQ(e,e')= Q(\nabla_{\sE}e,e')+ Q(e, \nabla_{\sE}e') \in \Omega^1_{X^0}$. The other choices of section are given by $\sigma + \alpha \lrcorner$ for elements $\alpha  \in \L^2\sE\ten_{\sO_{X,0}}\Omega^1_{X^0} $. The connection corresponding to $\alpha$ is then $\nabla_{\sE}+ Q(\alpha,-)$, so all orthogonal left connections arise in this way, since $Q$ gives an isomorphism from $\L^2\sE$  to the space of antisymmetric endomorphisms of $\sE$.

We next observe that the differential $\delta$ on $\sO_X$ must be given by $\{\phi,-\}$. It suffices to check this on generators, but both maps are automatically $0$ on $\sO_{X,0}$, and they agree on $\sO_{X,1}$ by definition, so we need only calculate the effect on $\sO_{X,2}$. For all $z \in \sO_{X,2}$ and $e \in \sE=\sO_{X,1}$, we have
\begin{align*}
 \{\{\phi,z\},e\} &= \{\phi,\{z,e\}\} - \{z,\{\phi,e\}\}\\
&=\delta\{z,e\} - \{z,\delta e\}\\
&= \{\delta z,e\},
\end{align*}
so  $\delta z - \{\phi,z\}$ is central for $\{-,-\}$, and must therefore be $0$ by non-degeneracy of $Q$.

Now, since $\delta^2=0$, it follows that $\{\phi,\{\phi,-\}\}=0$, or equivalently that the element $\{\phi,\phi\}=Q(\phi,\phi)$ is central. This amounts to saying that $\{Q(\phi,\phi),z\}=0$ for all $z \in \sO_{X,2}$; using the isomorphism  $\sO_{X,2}/\L^2\sE \cong \sT_{X^0}$ above, this can be rephrased as $dQ(\phi,\phi)=0 \in \Gamma(X^0, \Omega^1_{X^0}) $.
\end{proof}

% The pair $(\sE,\phi)$ then defines a map $\rho$ from $(X^0,\pi^0X)$ to the stack $([Q_r/\OO_r], [\{0\}/\OO_r]$. When $R=\Cx$ and $r$ is even, Corollary \ref{stiefelcor} then gives us an element
% \[
%  \rho^*e_r \in \H^r(X^0(\Cx),\pi^0X(\Cx);o_X) , 
% \]
% where $o_X  \subset (\det \sE)^{\an}$ is the local system consisting of elements $v$  with $Q(v,v)$ locally constant --- this local system is trivial when $\sE$ is a special orthogonal bundle (or equivalently $\det \Omega^1_X$ is trivial), and always has a canonical isomorphism $o_X\ten_{\Cx} o_X \cong \Cx$.
% 
% As we see below, it turns out that  using the isomorphism $ \frown[X^0] \co \H^r(X^0(\Cx),\pi^0X(\Cx),o_X)  \to    \H_{2\dim_{\Cx} X^0-r}^{BM}(\pi^0X(\Cx),o_X)$, the virtual fundamental classes of Corollary \ref{VFCCor} are then just the classes
% \[
%  \hbar^{(\dim X)/2} \rho^*e_r\frown[X^0] \cdot (1+ \hbar^2\Cx\brh) \in \H_{\dim X}^{BM}(\pi^0X(\Cx),o_X)\brh
% \]
% when $r$ is even, and $0$ when $r$ is odd, with $\bM=o_X$.

% \subsubsection{Quantisations of strict Poisson structures}

\begin{lemma}\label{strictcompatconn} 
Let $X$ be a dg $R$-manifold equipped with a strictly non-degenerate strict $(-2)$-shifted Poisson structure $\pi$.
Then   there exists an essentially unique right connection $\nabla$ on $\sO_X$ satisfying the conditions of Proposition \ref{uniqueconn}.

If $R$ is discrete (i.e. $R\simeq \H_0R$) and $\pi^0X$ connected, then the resulting space  
\[
\oR\Gamma(X^0,Q\cP(\sO_X, \nabla,-2))
\]
of quantisations  of $\pi_{\hbar}\in \Gamma(X^0,\cP(\sO_X,-2))$ with respect to $\nabla$ is given by  the discrete set  
\[
\hbar \pi + \hbar^2 R\brh,
\]
up to weak equivalence.
\end{lemma}
\begin{proof}
 The right connection $\nabla$ on $\sO_X$ from Proposition \ref{uniqueconn} is determined by the condition $\nabla \pi=0$. Since $\nabla(\alpha \lrcorner \pi) = d\alpha \lrcorner \pi \mp \alpha \lrcorner \nabla(\pi)$ for all $\alpha \in \Omega^1_X$,  we then define
 $\nabla \co \sT_X \to \sO_X$  by
\begin{align*}
 \nabla(v)&= \nabla( (\pi^{\flat})^{-1}(v) \lrcorner \pi)\\
&=  d((\pi^{\flat})^{-1}(v)) \lrcorner \pi,
\end{align*}
for $\pi^{\flat} \co \Omega^1_X \to \sT_X[2]$ the isomorphism given by contraction. Since this is flat and satisfies $\nabla \pi=0$, it must be the essentially unique connection of Proposition \ref{uniqueconn}, up to coherent homotopy.
  
Next, observe that since the right connection $\nabla$  has no higher terms (a consequence of the Poisson structure being strict), Lemma \ref{DRrBVlemma} ensures that the associated $L_{\infty}$ structure $\{[-,-]_{\nabla,n}\}_n$ on the complex of polyvectors is just the Schouten--Nijenhuis bracket, giving us a natural filtered DGLA isomorphism 
\[
Q\widehat{\Pol}(\sO_X,\nabla,-2)[-1] \simeq \widehat{\Pol}(\sO_X,\nabla,-2)[-1]\brh.
\]
In particular, inclusion of constants gives us a natural map
\[
\cP(\sO_X, \nabla,-2) \to  Q\cP(\sO_X, \nabla,-2),
\]
so $\pi_{\hbar}=\hbar \pi$ is a natural quantisation of itself with respect to $\nabla$.

By Corollary \ref{uniqueconncor},  the set $\hbar \pi+\hbar^2 R\brh$ is thus contained the space of quantisations, with the inclusion map being a weak equivalence.
\end{proof}

\subsubsection{From quantisations to fundamental classes}

\begin{lemma}\label{QIMconnlemma}
Let $X$ be a dg $R$-manifold equipped with a strictly non-degenerate strict $(-2)$-shifted Poisson structure $\pi$, and let $\sE:=\sO_{X,1}$ with   $\sO_Y \subset \sO_X$ the subalgebra generated by $\sO_{X^0}$ and $\sE$. Then for the right connection $\nabla$ on $\sO_X$ given by Lemma \ref{strictcompatconn}, we have a natural quasi-isomorphism
\[
 (\det \sE) \ten_{\sO_{X^0}}  \omega_Y[-\dim X] \to  \DR^r_{X/Y}(\sO_X,\nabla)
%\Omega^s_{X^0}\ten_{\sO_{X^0}}\sO_Y[s] \to  \DR^r_{X/Y}(\sO_X,\nabla)
\]
of  homotopy right $\sD_Y$-modules,  %$\sO_Y$-modules, 
%where $s= \dim X^0$.
where the right $\sD_Y$ -module structure on the left-hand side combines the canonical structure on $\omega_Y$ with the unique flat left connection on $\det \sE$ compatible with the inner product $Q$ of Lemma \ref{StrictPoissonLemmaNew}. 

If they exist, all choices of (not necessarily flat) orthogonal  left connection $\nabla_{\sE}$ on $\sE$ give rise to homotopy inverses of the quasi-isomorphism.
%%{DRrGorensteinLemma}: 
% \[
% \DR^r_{X/Y}(\sF,\nabla)\ten_{\sO_Y}\sO_X \to \hom_{\sO_X}(\Omega^s_{X/Y},\sF)[s]  %\HHom_{\sO_X}(\det \Omega^1_{X/Y},\sF)[\dim Y-\dim X],  %%no need to derive tensor products, as flat
%  \]
\end{lemma}
\begin{proof}
As in the proof of Proposition \ref{DRrtoBMprop}, we have $\omega_Y \cong  (\det \sE^*)\ten_{\sO_{X^0}} \det \Omega^1_{X^0} [\dim Y]  $, so
\[
(\det \sE) \ten_{\sO_{X^0}}  \omega_Y[-\dim X] \cong (\det \Omega^1_{X^0})\ten_{\sO_{X^0}}\sO_Y [-\dim X^0]
\]
since strict non-degeneracy of $\pi$ implies that $\dim X^0 = \dim X - \dim Y$.

For degree reasons, the Poisson bracket on $\sO_X$ satisfies  $\{\sO_{X^0},\sO_Y\}=0$, so contraction with $\pi$ gives a map $\Omega^1_{X^0} \to \sT_{X/Y}$, and for $a,b \in \sO_{X^0}$, we have $[da \lrcorner \pi,db \lrcorner \pi]=0$ for the canonical Lie bracket $[-,-]$ on $\sT_{X/Y}$. The proof of Lemma \ref{strictcompatconn} gives $\nabla(da \lrcorner \pi) = d^2a \lrcorner \pi=0$, so we can deduce that any product of elements of the form $da \lrcorner \pi$ lies in the kernel of $\nabla$, since $\DR^r_{X/Y}(\sO_X,\nabla)$ is a BV algebra. 

The same is automatically true for any $\sO_Y$-linear combination, so contraction with $\pi$ gives a chain map (cf. Definition \ref{QPolmudef}) %%note that $\pd_{\xi_i}$ has degree $2-1=1$ on RHS
\[
 \mu(-,\pi) \co (\Omega^*_{X^0}\ten_{\sO_{X^0}}\sO_Y,0) \to \DR^r_{X/Y}(\sO_X,\nabla) \subset \DR^r_{X}(\sO_X,\nabla),
\]
  and in particular a map $  (\det \Omega^1_{X^0})\ten_{\sO_{X^0}}\sO_Y [-\dim X^0] \to \DR^r_{X/Y}(\sO_X,\nabla)$ of $\sO_Y$-modules. This must be a quasi-isomorphism because Lemma \ref{DRrGorensteinLemma} implies that it becomes so on pulling back to $X$. 

By Lemma \ref{StrictPoissonLemmaNew}, orthogonal left connections on $\sE$ correspond to  subspaces $\sF$ complementary to $\L^2\sE \subset \sO_{X,2}$, and then similarly to the proof of   Lemma \ref{DRrGorensteinLemma}, we obtain a homotopy inverse by taking the quotient by the action of the ideal  in $\DR(\sO_X/\sO_Y) $ generated by $\sF$ and its image under the differential $d\pm \delta$.  

It remains to show that our map $ (\det \sE) \ten_{\sO_{X^0}}  \omega_Y[-\dim X]\to \DR^r_{X/Y}(\sO_X,\nabla)$ is compatible with  the respective right $\sD_Y$-module structures. To show this, it suffices to restrict to local co-ordinates. For $n:= \dim X^0$, without loss of generality we may replace $X^0$ with an  \'etale neighbourhood $U^0$  admitting an \'etale map $U^0 \to \bA^n$ such that $\sE|_{U^0}$ is free as an orthogonal bundle. This  gives co-ordinates $x_1, \ldots, x_n$ in $\sO_{U^0}$, and an orthonormal  basis $e_1, \ldots, e_r$ for $\sE|_{U^0}$. Then  the $dx_i$ form a basis for $\Omega^1_{U^0}$, giving a  dual basis $\{\pd_{x_i}\}_{i=1}^n$ of $\sT_{U^0}$.    Letting $\nabla_{\sE}$ be the  left connection on $\sE|_{U^0}$ which kills the elements $e_i$, Lemma \ref{StrictPoissonLemmaNew} gives a map $\sT_{U^0}\to \sO_{X,2}|_{U^0}$, and we let $\xi_i$ be the image of $\pd_{x_i}$.   

In these co-ordinates, the right $\sD_Y$-module structure on $\omega_Y$ is determined by the property that the tangent vectors $\pd_{x_i}$, $\pd_{e_j}$ act trivially on the element  
\[
dx_1 \wedge \ldots \wedge dx_n \ten (e_1\wedge \ldots \wedge e_r)^* \in \Omega^n_{U^0}\ten_{\sO_{X^0}} (\det\sE)^*.
\]
Since $\nabla_{\sE}$ induces the unique orthogonal left connection on $\det \sE|_{U^0}$, the element $e_1\wedge \ldots \wedge e_r$ is horizontal, so our right $\sD_Y$-module structure on $(\det \sE) \ten_{\sO_{X^0}}\omega_Y|_{U^0} \simeq \Omega^n_{U^0}[n-r] $ has the basis tangent vectors $\pd_{x_i}$, $\pd_{e_j}$ acting trivially on $dx_1 \wedge \ldots \wedge dx_n$.

Our expression for $\pi$ in these co-ordinates  is just $\sum_{i=1}^n \pd_{x_i}\pd_{\xi_i} + \half\sum_{j=1}^r \pd_{e_j}\pd_{e_j} $, and  the flat right  connection $\nabla$ on $\sO_X$ killing this just has  the basis tangent vectors $\pd_{x_i}$, $\pd_{\xi_i}$, $\pd_{e_j}$ acting trivially on $1$, so polynomials in this basis are cocycles in $\DR^r_{X/Y}(\sO_X,\nabla)  $. The chain map $\mu(-,\pi)$ then sends $dx_1 \wedge \ldots \wedge dx_n $ to $\pd_{x_1} \ldots \pd_{x_n}$, and basis tangent vectors act trivially on both for the respective structures, so we indeed have a map of right $\sD_Y$-modules.
\end{proof}

This leads to the following statement for far more general $(-2)$-shifted symplectic structures: 
\begin{proposition}\label{QIMconnpropDM}
Take a $(-2)$-shifted symplectic derived Deligne--Mumford stack over a discrete base $R=\H_0R$, together with a right $\sD$-module structure on $\det \Omega^1_X$ locally agreeing with that of Proposition \ref{DRrtoBMprop}. For the unique left $\sD$-module structure on  $\det \sT_X$ compatible with the leading term $\pi_2$ of the Poisson structure, the right $\sD$-module $(\sO_X,\nabla)$ of Proposition \ref{uniqueconn} then admits a 
 a canonical quasi-isomorphism
  \[
 \phi \co  (\sO_X,\nabla) \to (\det \sT_X)\ten_{\sO_X}(\det \Omega^1_X)
  \]
of right $\sD$-modules on $X$, such that 
\[
 \H_0\phi \co \sO_{\pi^0X} \to \H_0(\det \sT_X)\ten_{\sO_{\pi^0X}}\H_0(\det \Omega^1_X) 
\]
is the obvious isomorphism.
\end{proposition}
\begin{proof}
As in Remark \ref{DRrtoBMRmk}, we know that the conditions of Lemma \ref{QIMconnlemma} are satisfied locally on any $(-2)$-shifted symplectic derived DM stack. If we apply the functor $f^*_{\sD}$ of Proposition \ref{DRrGorensteinProp} locally to \'etale neighbourhoods $U$, then Lemma \ref{QIMconnlemma} gives us a quasi-isomorphism $\phi_U$ of right $\sD$-modules on $U$ between  $(\sO_U,\nabla)$ and the tensor product  $(\det \sE) \ten_{\sO_{U^0}}  \det \Omega^1_U$, where %$\det \Omega^1_U$ is given its canonical right $\sD$-module structure as in Proposition \ref{DRrtoBMprop}, and 
 $\det \sE$ is given the unique left $\sD$-module structure compatible with the inner product $(\det\sE)\ten_{\sO_{X^0}}(\det\sE) \cong \sO_{X^0}$. 
 
 The non-degenerate inner product on the cotangent complex $\Omega^1_X[-1]$ induced by the first term $\pi_2$ of the Poisson structure automatically induces an orthogonal inner product on its determinant line bundle $\det \sT_X$, %%shift gives dual
with an essentially unique compatible left connection. Locally, we can rewrite $\det \sE$ as $\det \sT_U$, and this connection corresponds to the connection on $\det \sE$ above. Thus we have established our desired quasi-isomorphism \'etale locally, and we will now see that this guarantees a global quasi-isomorphism.
 
 By the argument of \cite{FeiginTsygan}, the derived de Rham complex of $\sO_X$ is quasi-isomorphic to the algebraic de Rham complex  \cite{HartshorneAlgDeRham} of $\pi^0X$, so its  negative cohomology vanishes and its cohomology in degree $0$ is given by constants $R$. Therefore the kernel $\ker(\DR(\sO_X) \to \H_0\sO_X)$ has only strictly positive cohomology. For any line bundle $\sL$ on $X$ with right $\sD$-module structure,  the simplicial sheaf of derived $\sD$-module automorphisms of $\sL$ which fix the $\H_0\sO_X$-module $\H_0\sL$ is just given via exponentiation by Dold--Kan denormalisation of the good truncation chain complex $\tau_{\le 0} \ker(\DR(\sO_X) \to \H_0\sO_X)$. Since this complex is acyclic, the space of derived automorphisms is contractible, so descent is straightforward. 

 Taking $\sL:=(\sO_X,\nabla)$ in the previous paragraph, we deduce that  the maps $\phi_U$ must agree canonically on overlaps up to coherent homotopy,    combining to give our required global quasi-isomorphism 
% \[
% \phi \co  \det \sT_X  \simeq\hom_{\sO_X}(\det \Omega^1_X,(\sO_X,\nabla))
% \]
$\phi$ of $\sD$-modules with the additional property that $\H_0\phi$ %$ \co  \H_0\det \sT_X  \cong \hom_{\sO_X}(\det \Omega^1_X,\H_0\sO_X)$ 
is the canonical isomorphism. 
\end{proof}

\begin{remarks}\label{QIMconnRmk}
 %For more general $(-2)$-shifted symplectic derived DM stacks,

Note that Remark \ref{DRrGorensteinRmk} explains how $\det \Omega^1_X$ should always carry a canonical right $\sD$-module structure, which would make the hypothesis of Proposition \ref{QIMconnpropDM} redundant. It is interesting to note that the the characterisation $ (\det \sT_X)\ten_{\sO_X} (\det \Omega^1_X)\simeq (\sO_X,\nabla)$  then means that the right connection of Proposition \ref{uniqueconn} depends only on the leading term $\pi_2$ of the Poisson structure.

Beware the map of $\sO_X$-modules underlying $\phi$ in Proposition \ref{QIMconnpropDM}  will not usually be the canonical quasi-isomorphism $\sO_X \simeq  (\det \sT_X)\ten_{\sO_X}(\det \Omega^1_X)$, since the  proof does not work with the complex $\ker(\DR(\sO_X) \to \sO_X)$ in place of  $\ker(\DR(\sO_X) \to \H_0\sO_X)$. This means that the two right $\sD$-module structures on $\sO_X$ which we encounter can differ, but that they will be conjugate by an element of $\bH^0(X,\sO_X)$ lifting $1 \in \H^0(\pi^0X,\sO_{\pi^0X})$. 
\end{remarks}

 \begin{proposition}\label{kappaprop} 
Let $X$ be a dg $R$-manifold equipped with a strictly non-degenerate strict $(-2)$-shifted Poisson structure, and let $\sE:=\sO_{X,1}$ with   $\sO_Y \subset \sO_X$ the subalgebra generated by $\sO_{X^0}$ and $\sE$. Then for the right connection $\nabla$ on $\sO_X$ given by Lemma \ref{strictcompatconn}, and for any left connection $\nabla_{\sE}$ on $\sE$, consider the associated homotopy inverse map 
\[
\Psi_{\nabla_{\sE}} \co   \DR^r_{X}(\sO_X,\nabla)\to \DR^r_Y((\det \sE) \ten_{\sO_{X^0}}  \omega_Y)[-\dim X]
\]
of Lemma \ref{QIMconnlemma}.

The image under this map of the cocycle $e^{\hbar \pi} \in \z^0\DR^r_{X}(\sO_X,\nabla)\brh$ associated to the quantisation $\hbar\pi$ of  Lemma \ref{strictcompatconn} can be expressed in terms of a local orthonormal basis $\{e_j\}_j$ for $\sE$ as
\[
\Psi_{\nabla_{\sE}}(e^{\hbar \pi} )= \hbar^{\dim X^0}  \exp( \hbar^{-1}Q(\kappa) + \hbar^{-1}\nabla_{\sE}(\phi)+ \half \hbar\sum_j \pd_{e_j}^2   + \sum_j \nabla_{\sE}(e_j)\pd_{e_j} ),
\]
for $Q$ 
and $\phi$ as in Lemma \ref{StrictPoissonLemmaNew},
%%the orthogonal inner product on $\sE$ determined by $\pi$, $\phi \in \Gamma(X^0,\sE)$ the global section determined by $\delta$,  
and $\kappa$ the curvature of the connection $\nabla_{\sE}$. 

The image of this cocycle in 
\[
\H^{-\dim X}\DR^r_{X^0/R}((\det \sE)\ten_{\sO_{X^0}}\omega_{X^0})\brh\cong\H^{\rk(\sE)}\DR(X^0,\det \sE )\brh
\]
is  the cohomology class 
\[
 [\exp(\hbar \pi)] \mapsto  \begin{cases} \frac{\hbar^{(\dim X)/2}[Q(\kappa)^{\rk(\sE)/2}]}{(\rk(\sE)/2)!}  & 2\mid \dim(X)\\ 
								      0 & 2 \nmid \dim X.
                       \end{cases}
\]
\end{proposition}
\begin{proof}
Choose local co-ordinates $\{x_i\}_{i=1}^n$ on $X^0$, with the connection $\nabla_{\sE}$ then determining co-ordinates $\xi_i \in \sO_{X,2}$ with $\nabla_{\sE}(e)= \sum_i\{\xi_i,e\}dx_i$ for $e \in \sE$. Also choose   orthonormal basis vectors $e_j$ locally for $\sE$, but note that since we are fixing $\nabla_{\sE}$ at the outset, we cannot assume that these are horizontal. In these co-ordinates, the Poisson structure is given by 
\[
 \pi = \sum_{i=1}^n \pd_{x_i}\pd_{\xi_i} + \half\sum_{j=1}^r \pd_{e_j}\pd_{e_j}+ \sum_{ijl} c_{jli}e_l \pd_{e_j}\pd_{\xi_i}+\sum_{ikjl} \lambda_{jl}^{ik} e_je_l\pd_{\xi_i}  \pd_{\xi_k},
 \]
 where $\nabla_{\sE}(e_j)= \sum_{il} c_{jli}e_l dx_i $ and %for the curvature $\kappa$ of $\nabla_{\sE}$, we have 
 $Q(\kappa) =  \sum_{ikjl} \lambda_{jl}^{ik} e_je_l dx_i\wedge dx_k$.

Now  observe that for $I=\prod_i \pd_{x_i} \pd_{\xi_i}$, we have
\begin{align*}
 \prod_{j\in J} \pd_{\xi_j}\exp(\hbar\sum_{i=1}^n \pd_{x_i} \pd_{\xi_i})&=\prod_{j\in J} \pd_{\xi_j}\prod_{i=1}^n (1+ \hbar \pd_{x_i} \pd_{\xi_i})\\
&= \prod_{j\in J} \pd_{\xi_j} \prod_{i\notin J} (1+ \hbar \pd_{x_i} \pd_{\xi_i})\\
&=  (\prod_{j\in J} dx_j \prod_{i\notin J} (dx_id\xi_i +\hbar)) \lrcorner I\\
&= \hbar^n    (\prod_{j\in J} \hbar^{-1} dx_j \prod_{i\notin J} (\hbar^{-1}dx_id\xi_i +1)) \lrcorner  I\\ %%fixed
&= ( \prod_{j\in J}\hbar^{-1}dx_j) \exp(\hbar^{-1}\sum_{i=1}^n dx_i d\xi_i) )\lrcorner  \hbar^nI.   
\end{align*}

Expanding and contracting then allows us to rewrite our class $\exp(\hbar \pi)$ as
\[
  \exp( \half \hbar \sum_{j}\pd_{e_j}^2 +\sum_{ijl} c_{jli}e_l \pd_{e_j}dx_i+ \hbar^{-1}\sum_{ikjl} \lambda_{jl}^{ik} e_je_l dx_i dx_k) \exp(\hbar^{-1}\sum_i dx_i d\xi_i)\lrcorner  \hbar^nI %%I don't like having $\pd_{r_j}$ before contraction, but past caring
\]

The homotopy inverse $\Psi_{\nabla_{\sE}}$ associated to $\nabla_{\sE}$ kills all terms in the image of $(d\xi_i +\delta \xi_i) \lrcorner -$, giving 
\[
   \exp(\hbar^{-1}\sum_i dx_i d\xi_i) \lrcorner \beta \mapsto \exp(\hbar^{-1}\sum_i\delta \xi_i dx_i ) \lrcorner \beta =\exp(\hbar^{-1}\nabla_{\sE}(\phi))\lrcorner \beta 
\]
for any $\beta$, using Lemma \ref{StrictPoissonLemmaNew}. The rest then simplifies to give the globally defined expression
\[
 \Psi_{\nabla_{\sE}}( e^{\hbar \pi})=\exp( \hbar^{-1}Q(\kappa) + \hbar^{-1}\nabla_{\sE}(\phi)+\half \hbar\sum_j \pd_{e_j}^2   + \sum_j \nabla_{\sE}(e_j)\pd_{e_j}) \hbar^n,
\]
where we have dropped the factor $I$ because $\mu(dx_1\wedge \ldots \wedge dx_n,\pi)= \prod_i \pd_{\xi_i} $.
\end{proof}

\subsection{Relating virtual fundamental classes and Euler classes}

For dg manifolds with strictly non-degenerate strict Poisson structures, we now give a more explicit description of  the  virtual fundamental classes of Corollary \ref{VFCCor}. % and the local systems in whose homology they lie. 

\subsubsection{Euler and Thom classes}\label{eulersn}

\begin{lemma}\label{eulerlemma}
 If $\det_{\R}$ denotes the determinant representation of the orthogonal group $\OO_r(\R)$, then in low degrees the relative cohomology groups $\H^i(B\OO_r(\R),B\OO_{r-2}(\R);\det_{\R})$ are given for $r$ even by:
 \[
  \H^i(B\OO_r(\R),B\OO_{r-2}(\R);{\det}_{\R}) \cong  \begin{cases} \R.e_r & i=r \\ \R.e_{r-2} & i =r-1 \\ 0 & i<r-1, \end{cases}
 \]
 where $e_p \in \H^i(B\SO_p,\Z)$ denotes the Euler class. %% %$V_p \in \H^i(B\SO_p,B\SO_{p-1};\Z)$ the Thom class.
For $r$ odd,  the cohomology groups vanish %we have $\H^i(B\OO_r(\R),B\OO_{r-2}(\R);\det_{\R})=0$ 
for all $i<2r-2$.
 \end{lemma}
 \begin{proof}
 Observe that we have a fibration sequence $B\SO_r(\R) \to B\OO_r(\R) \to BC_2$, and that the resulting representation $\R.C_2$ of $\OO_r$ is just $\R \oplus \det_{\R}$. We  therefore have  $ \oR\Gamma(\OO_r, \R) \oplus \oR\Gamma(\OO_r,\det_{\R}) \simeq \oR\Gamma(\SO_r, \R)$.
  
It then follows for instance  from \cite[Theorems 1.5 and 1.6]{brownCohoBSOnBOn} that $\oR\Gamma(\OO_r,\det_{\R})$ is a free $\oR\Gamma(\OO_r,\R)$-module generated by $e_r \in \H^r(B\SO_r,\Z)$ %%yes $\SO$, not $\OO$
when $r$ is even, and is zero for $r$ odd. Since $\H^*(\OO_r,\R)$ is concentrated in even degrees, %%indeed multiples of $4$.
the long exact sequence for $\H^*(B\OO_r,B\OO_{r-2};\det_{\R})$ splits, with the maps
\begin{align*}
  \H^{2k}(B\OO_{r-2},{\det}_{\R}  ) \to &\H^{2k+1}(B\OO_r,B\OO_{r-2};{\det}_{\R})\text{ and }\\ 
 &\H^{2k}(B\OO_r,B\OO_{r-2};{\det}_{\R})\to  \H^{2k}(B\OO_r,{\det}_{\R}) 
\end{align*}
 being isomorphisms for all $k$, giving the description required.
\end{proof}

\begin{remark}
We will refer to the canonical generator $e_r$ of $\H^r(B\OO_r(\R),B\OO_{r-2}(\R);\det_{\R})$ as the Euler class, since its image in $ \H^r(B\OO_r(\R),\det_{\R})$ is the Euler class. However, the image of $e_r$  in $ \H^r(B\OO_r(\R),B\OO_{r-1}(\R);\det_{\R})$ is usually known as the Thom class, so we could equally well refer to  $e_r$ as such.
\end{remark}

\begin{definition}\label{Qrdef}
 Define  the affine scheme $Q_r \subset \bA^r$ to be vanishing locus of $\sum_{j=1}^r z_j^2$; observe that this is equivariant under the action of the orthogonal group $\OO_r$. Define the quasi-affine scheme $Q_r^*$ by $Q_r^*:=Q_r\setminus \{0\}$.
 \end{definition}

\begin{lemma}\label{stiefellemma}
For the analytic topologies on $Q_r(\Cx)$ and $\OO_r(\Cx)$, the homotopy quotient topological spaces $[Q_r(\Cx)/\OO_r(\Cx)]$ and $[Q_r^*(\Cx)/\OO_r(\Cx)]$ are homotopy equivalent to $B\OO_r(\R)$ and $B\OO_{r-2}(\R)$, respectively.
 \end{lemma}
 \begin{proof}
  The space $Q_r(\Cx)$ has an obvious deformation retraction to $\{0\}$ by scaling, so the natural map $ [Q_r(\Cx)/\OO_r(\Cx)] \to B\OO_r(\Cx)$ is a homotopy equivalence. Moreover, $\OO_r(\R) \subset \OO_r(\Cx)$ is a deformation retract, so  $B\OO_r(\R) \to B\OO_r(\Cx)$ is also a homotopy equivalence.
  
 If we write $z_j:=x_j+iy_j$, then $Q_r(\Cx) \subset \R^{2s}$ consists of points satisfying $\sum_{j=1}^r x_j^2 = \sum_{j=1}^ry_j^2  $ and $\sum_j x_jy_j=0$. In other words, $\|\uline{x}\|= \|\uline{y}\|$ and   $\uline{x}\cdot \uline{y}=0$. Scaling gives a deformation retraction of $Q_r^*(\Cx)$ onto its subspace $V_2(\R^r)$ of such points with $\|\uline{x}\|=1$ (known as the Stiefel $2$-manifold), which is naturally homeomorphic to $\OO_r(\R)/\OO_{r-2}(\R)$. Thus we have homotopy equivalences $ [Q_r^*(\Cx)/\OO_r(\Cx)] \xla{\sim} [V_2(\R^r)/ \OO_r(\R)] \cong B\OO_{r-2}(\R)$.
 \end{proof}

\begin{remark}
It is interesting to note that the homotopy equivalence $[Q_r^*(\Cx)/\OO_r(\Cx)]\to B\OO_r(\Cx)$ does lift to a map  $[Q_r^*/\OO_r] \to B\OO_{r-2}$ of algebraic stacks. An object of $[Q_r^*/\OO_r]$ consists of an orthogonal  rank $r$ vector bundle $\sE$ together with a non-zero section $\phi$ satisfying $\phi\cdot \phi=0$, and this gives rise to a vector bundle $\<\phi\>^{\perp}/\<\phi\>$ of rank $r-2$, and together these give a map $[Q_r^*/\OO_r] \to B\OO_{r-2}$ . 

However, this is not a map of stacks over $B\OO_r$. Such a map would amount to giving an $\OO_r$-equivariant map $Q_r^* \to  \OO_r/\OO_{r-2} =:V_2$ of schemes lifting the homotopy equivalence; this is  impossible because $V_2$ is affine and $Q_r^*$ has affine closure $Q_r$ (for $r>1$), which is contractible. Thus there is no  map  $([Q_r/\OO_r], [Q_r^*/\OO_r]) \to (B\OO_r, B\OO_{r-2})$ of morphisms of algebraic stacks lifting the homotopy equivalence $([Q_r(\Cx)/\OO_r(\Cx)], [Q_r^*(\Cx)/\OO_r(\Cx)]) \to (B\OO_r(\Cx), B\OO_{r-2}(\Cx))$
of morphisms of topological spaces. %%there is a ntatural map $V_2 \to Q_r^*$ given by $(x,y) \mapsto x+iy$, but that won't help us in practice.
\end{remark}

 Combining Lemmas \ref{eulerlemma} and \ref{stiefellemma} gives:
 \begin{corollary}\label{stiefelcor}
  If $\det_{\Cx}$ denotes the determinant representation of the orthogonal group $\OO_r(\Cx)$, then for $r$ even we have
 \[
 \H^i([Q_r(\Cx)/\OO_r(\Cx)],[Q_r^*(\Cx)/\OO_r(\Cx)] ;{\det}_{\Cx})  \cong  \begin{cases} \Cx.e_r & i=r \\ \Cx.e_{r-2} & i =r-1 \\ 0 & i<r-1,\end{cases}
 \]
 for Euler classes $e_p$,
 with the natural map $\H^r([Q_r(\Cx)/\OO_r(\Cx)],[Q_r^*(\Cx)/\OO_r(\Cx)] ;\det_{\Cx}) \to \H^r( B\OO_r(\Cx),\det_{\Cx})  $  being an isomorphism.
 
For $r$ odd, we have $\H^i([Q_r(\Cx)/\OO_r(\Cx)],[Q_r^*(\Cx)/\OO_r(\Cx)];\det_{\Cx})=0$ for all $i<2r-2$.
\end{corollary}

\subsubsection{Twisted de Rham complexes}

For the affine scheme $Q_r \subset \bA^r$ of Definition \ref{Qrdef}, %  Corollary \ref{stiefelcor}, given by $\sum_j z_j^2=0$, 
we now consider the na\"ive de Rham complex $\DR(\sO_{Q_r}/R)$ given by $(\Omega^*_{Q_r/R},d)$. Note that although $Q_r$ is singular, it has an obvious algebraic deformation retraction to $0$, so this na\"ive de Rham complex is still quasi-isomorphic to $R$.

More specifically, we will be working with the twisted de Rham complex 
\[
 \DR(Q_r \by \bA^r)_{\uline{t}.\uline{z}} := (\Omega^*_{Q_r\by \bA^r/R},d + d(\sum_j t_jz_j)\wedge-) 
 \]
 and its pullbacks $\DR(Q_r \by \bA^r \by \overbrace{\OO_r \by \OO_r \by \ldots \by \OO_r}^m)_{\uline{t}.\uline{z}} $. Together these form a cosimplicial cochain complex
 \[
 \DR(Q_r \by \bA^r)_{\uline{t}.\uline{z}} \Rightarrow  \DR(Q_r \by \bA^r \by \OO_r)_{\uline{t}.\uline{z}} \Rrightarrow \DR(Q_r \by \bA^r\by \OO_r\by \OO_r)_{\uline{t}.\uline{z}} \ldots
 \]
whose total complex is $\DR([(Q_r \by \bA^r)/\OO_r])_{\uline{t}.\uline{z}} $. We can also take the twisted de Rham complex with coefficients in the determinant representation $\det$;  the only change is to the cosimplicial structure, and the  the resulting total complex  is $\DR([(Q_r \by \bA^r)/\OO_r], \det)_{\uline{t}.\uline{z}} $.

\begin{lemma}\label{EulerLemmaNew1}
Let $X$ be a dg $R$-manifold equipped with a strictly non-degenerate strict $(-2)$-shifted Poisson structure $\pi$. Then 
 (replacing $X^0$ with an open neighbourhood of $\pi^0X$ if necessary), the global section $\phi \in \Gamma(X^0,\sE)$ from Lemma \ref{StrictPoissonLemmaNew} satisfies $Q(\phi,\phi)=0$ and for the resulting morphism
\[
  X^0 \xra{(\phi,\sE)} [Q_r/\OO_r]
\]
of algebraic stacks as in \S \ref{eulersn}, the right de Rham complex $\DR^r_X((\det \Omega^1_X)^*\ten_{\sO_X}\omega_X)$ on $X^0$ is naturally quasi-isomorphic to the twisted de Rham complex
\[
 \DR(\sO_{X^0})\ten_{(\phi,\sE)^{-1}\DR([Q_r/\OO_r])} (\phi,\sE)^{-1} \DR([(Q_r \by \bA^r)/\OO_r], \det)_{\uline{t}.\uline{z}}[-2\dim X^0].
\]
% where $\det$ is the determinant representation.
The  virtual fundamental class
\[
 [e^{\hbar\pi}] \in  \H^{- \dim X}(X,\DR^r_X((\det \Omega^1_X)^*\ten_{\sO_X}\omega_X))\brh \cong \H^r(X^0,X^0\setminus\pi^0X; \DR( \det \sE))\brh
\]
from Corollary \ref{VFCCor} is then given by
\[
 \hbar^{\dim X^0}(\phi,\sE)^* \tilde{e}_r,  
\]
for canonical classes $\tilde{e}_r$ lying in the twisted de Rham cohomology groups
\[
 \tilde{e}_r \in \H^r\DR([(Q_r \by \bA^r)/\OO_r], \det)_{\uline{t}.\uline{z}} ((\hbar)).
\]
\end{lemma}
\begin{proof}
 By Lemma \ref{StrictPoissonLemmaNew}, we know that   $dQ(\phi,\phi)=0$, so $Q(\phi,\phi)$ is locally constant. Since $\phi$ vanishes on  $\pi^0X$, it follows that $Q(\phi,\phi)=0$ on the connected components of $X^0$ containing $\pi^0X$. If we discard the other components, the pair $(\sE,\phi)$ thus defines a   map $X^0 \to [Q_r/\OO_r]$. 

Lemma \ref{QIMconnlemma} and Corollary \ref{DRrtoBMcor} give us quasi-isomorphisms
\[
\DR^r_Y((\det \sE) \ten_{\sO_{X^0}}  \omega_Y)[-\dim X] \xra{\mu(-,\pi)}  \DR^r_{X}(\sO_X,\nabla) \to \DR^r_X(\det \Omega^1_X)^*\ten_{\sO_X} \omega_X)[-\dim X ],
\]
with $\det \Omega^1_X \cong \det \sE^*\ten_{\sO_{X^0}}\sO_X$. Locally on $X^0$, consider the contractible groupoid of left connections on $\sE$, whose objects are connections with a unique isomorphism  between any pair of connections. These give us homotopy inverses $\Psi_{\nabla_{\sE}}$ as in Lemma \ref{QIMconnlemma}.

For  the $\OO_r$-torsor  $\beta \co P \to X^0$ of trivialisations of $\sE$, we have a canonical basis $\{e_j\}_j$ for the orthogonal bundle $\beta^*\sE$, and it is easy to check that we have an isomorphism
\[
 \det \ten \DR(Q_r \by \bA^r)_{\uline{t}.\uline{z}}\ten_{ \phi^{-1}\DR(Q_r)}\DR(P)[-2\dim X^0] \cong  \DR^r_Y((\det \sE) \ten_{\sO_{X^0}}  \omega_Y)\ten_{\DR(\sO_{X^0})}\DR(P) ,
\]
sending $t_j \mapsto \pd_{e_j}$ and $ dt_j\mapsto e_j^*$, noting that $\det$ is generated by $e_1\wedge \ldots \wedge e_r$.   Repeating this construction for $P\by_{X^0}P\cong \OO_r \by P$ and the higher fibre products gives the required quasi-isomorphisms
\begin{align*}
 &\DR(\sO_{X^0})\ten_{(\phi,\sE)^{-1}\DR([Q_r/\OO_r])} (\phi,\sE)^{-1} \DR([(Q_r \by \bA^r)/\OO_r])_{\uline{t}.\uline{z}} \ten\det[-2\dim X^0]\\
 &\cong \DR^r_Y((\det \sE) \ten_{\sO_{X^0}}  \omega_Y\ten_{\DR(\sO_{X^0})}\DR(\cosk_0(P/X^0))\\
 &\simeq  \DR^r_Y((\det \sE) \ten_{\sO_{X^0}}  \omega_Y),
\end{align*}
where $\cosk_0(P/X^0) $ is the $0$-coskeleton of $P$ over $X^0$, i.e. the canonical simplicial scheme resolving $X^0$ by iterated fibre products of $P$.

Now, if we choose a local trivialisation of the vector bundle $\sE$ on $X^0$, with orthonormal basis $\{e_j\}_j$, then locally we can take the  orthogonal  left connection $\nabla_0$  on $\sE$  for which the elements $e_j$ are horizontal.
If we choose local co-ordinates $\{x_i\}_{i=1}^n$ on $X^0$,  the connection $\nabla_0$ then determines co-ordinates $\xi_i \in \sO_{X,2}$ with $\nabla_{0}(e)= \sum_i\{\xi_i,e\}dx_i$ for $e \in \sE=\sO_{X,1}$.  In these co-ordinates, the Poisson structure is given by 
\[
 \pi = \sum_{i=1}^n \pd_{x_i}\pd_{\xi_i} + \half\sum_{j=1}^r \pd_{e_j}\pd_{e_j},
 \]
so as in the proof of Proposition \ref{kappaprop}, we have
\begin{align*}
e^{\hbar\pi}  &=  \exp(\hbar^{-1}\sum_i dx_i d\xi_i)\exp( \half \hbar \sum_{j}\pd_{e_j}^2) \lrcorner  \hbar^nI,\\
 \Psi_{0}(e^{\hbar \pi})&= \hbar^n\exp(\hbar^{-1}\nabla_0(\phi))\exp( \half \hbar \sum_{j}\pd_{e_j}^2).
\end{align*}

Under the maps above, this comes from the cocycle $\hbar^n\tilde{e}_r^0$, where
\[
\tilde{e}_r^0:= \exp( \half \hbar \sum_{j} t_j^2)\prod_{j=1}^r e_j(dt_j +\hbar^{-1} dz_j ) 
 \]
in $\z^r\DR(Q_r \by \bA^r, \det)_{\uline{t}.\uline{z}}((\hbar)) $. That this is indeed closed follows because $\sum_j z_j^2=0$, so  $\sum_j z_j dz_j=0$. 

This cocycle is not $\OO_r$-equivariant, so we need to compare its pullbacks $\pd^0\tilde{e}_r^0, \pd^1 \tilde{e}_r^0$ to $\DR(Q_r \by \bA^r \by \OO_r, \det)_{\uline{t}.\uline{z}} )$ under the two natural maps $\pd_0, \pd_1\co Q_r \by \bA^r \by \OO_r \to Q_r \by \bA^r$
 given by projection and the group action. We then have $\pd^0\tilde{e}_r^0 =\tilde{e}_r^0$,  while $\pd^1\tilde{e}_r^0$  corresponds to a different choice of orthonormal basis $e_k':=\sum_je_jg_{jk}$, or equivalently to the orthogonal left connection $\nabla_0 -g^{-1}.dg$. The canonical homotopy between $\Psi_0$ and $\Psi_{-g^{-1}dg}$ (sending $(d+\delta)\xi_i$ to $g^{-1}\frac{\pd g}{\pd x_i}$) can similarly be rewritten as the pullback along $P\by_{X^0}P \to  Q_r \by \bA^r\by \OO_r$ of a homotopy 
 \[
  \tilde{e}_r^1 \in \DR(Q_r \by \bA^r \by \OO_r, \det)_{\uline{t}.\uline{z}} )^{r-1}
 \]
between $ \pd^0\tilde{e}_r^0 $ and $ \pd^0\tilde{e}_r^1$. We do not need a formula; it suffices to know that it exists canonically as an expression in terms of the co-ordinates on $Q_r \by \bA^r\by \OO_r$.

Uniqueness of the homotopy between choices of connection then guarantees that this will satisfy the cocycle condition, i.e. that
\[
 \pd^1 \tilde{e}_r^1  =\pd^1 \tilde{e}_r^0 + \pd^1 \tilde{e}_r^2 \in \DR(Q_r \by \bA^r \by \OO_r^2, \det)_{\uline{t}.\uline{z}} )^{r-1}((\hbar)).
\]
Thus $\tilde{e}_r:= \tilde{e}_r^0+\tilde{e}_r^1\in \z^r\DR([(Q_r \by \bA^r)/\OO_r], \det)_{\uline{t}.\uline{z}}((\hbar))$ is our required class.

\end{proof}

\begin{lemma}\label{twistedDRlemma}
 There is a canonical $\OO_r$-equivariant quasi-isomorphism between the twisted de Rham complex $ \DR(Q_r \by \bA^r)_{\uline{t}.\uline{z}}$ and the complex 
 \[
  \cocone(\DR(Q_r) \to \oR j_*\DR(Q_r^*)) 
 \]
of sheaves on $Q_r$, where $j \co Q_r^* \to Q_r$ denotes the natural open immersion. In particular, this gives rise to an isomorphism
\[
 \H^*\DR([(Q_r \by \bA^r)/\OO_r], \det)_{\uline{t}.\uline{z}} \simeq \H^*([Q_r/\OO_r],[Q_r^*/\OO_r]; \DR(\sO_{[Q_r/\OO_r]}\ten \det))
\]
to relative cohomology; when $R=\Cx$, the last group is just $\H^*(B\OO_r,B\OO_{r-2}; \det_{\Cx})$.
 \end{lemma}
\begin{proof}
 If we start from the complex $\DR(\sO_{Q_r}[t_1, \ldots,t_r])_{\uline{t}.\uline{z}} )$ of sheaves on $Q_r$, then taking the quotient by the dg subcomplex generated by all multiples of the co-ordinates $t_j$ gives us a quasi-isomorphic quotient complex isomorphic to 
 \[
  \DR(\sO_{Q_r}) \xra{\wedge \sum_j dt_j} \bigoplus_j \DR( \sO_{Q_r}[z_j^{-1}])dt_j \xra{\wedge \sum_j dt_j }  \bigoplus_{j<k} \DR( \sO_{Q_r}[z_j^{-1},z_k^{-1}])dt_j\wedge dt_k \ldots ,
 \]
so the higher terms are effectively the \v Cech complex calculating $\oR j_*\DR(\sO_{Q_r^*})$, and we have a quasi-isomorphism $ \DR(\sO_{Q_r}[t_1, \ldots,t_r])_{\uline{t}.\uline{z}} ) \to \cocone(\DR(Q_r) \to \oR j_*\DR(Q_r^*)) $

Although this description is not $\OO_r$-equivariant, it implies that the complex $ \DR(\sO_{Q_r^*}[t_1, \ldots,t_r])_{\uline{t}.\uline{z}} )$ of sheaves on $Q_r^*$ is acyclic, so the target of the  canonical map 
\[
\DR(\sO_{Q_r}[t_1, \ldots,t_r])_{\uline{t}.\uline{z}} )  \to  \oR j_*\DR(\sO_{Q_r^*}[t_1, \ldots,t_r])_{\uline{t}.\uline{z}} )\by_{\oR j_*\DR(\sO_{Q_r^*})}\DR(\sO_{Q_r})
 \]
 is a model for  $\cocone(\DR(Q_r) \to \oR j_*\DR(Q_r^*))$.
 
 The remaining statements follow by $\OO_r$-equivariance and Lemma \ref{stiefellemma}.
\end{proof}

\begin{lemma}\label{EulerLemmaNew2}
 Under the isomorphisms of Lemma \ref{twistedDRlemma}, when $R=\Cx$, the twisted de Rham cohomology classes
 \[
 \tilde{e}_r \in \H^r\DR([(Q_r \by \bA^r)/\OO_r], \det)_{\uline{t}.\uline{z}} ((\hbar)).
\]
of Lemma \ref{EulerLemmaNew1} correspond  to the Euler classes
\[
 \hbar^{-r/2} e_r \in \H^r(B\OO_r,B\OO_{r-2};{\det}_{\Cx})((\hbar)) 
\]
of  Corollary \ref{stiefelcor} when $r$ is even,  and zero when $r$ is odd.
\end{lemma}
\begin{proof}
 By Corollary \ref{stiefelcor}, we know that for $r$ odd we have $\H^r(B\OO_r,B\OO_{r-2};\det)=0$, while for $r$ even  the space $\H^r(B\OO_r,B\OO_{r-2};\det_{\Cx}$) is spanned by the Euler class $e_r$. Thus $\tilde{e}_r$ is necessarily 
 zero when $r$ is odd, and some multiple of $e_r$ when $r$ is even.
 
Using the isomorphism $ \H^r(B\OO_r,B\OO_{r-2};\det_{\Cx}) \to \H^r(B\OO_r,\det_{\Cx})$ from Corollary \ref{stiefelcor}, it follows that $\tilde{e}_r$ is determined by its image $\bar{e}_r$ in 
 \[
 \H^r\DR([Q_r/\OO_r], {\det}_{\Cx})((\hbar)) \cong \H^r\DR(B\OO_r, {\det}_{\Cx})((\hbar)),
 \]
and indeed by its image in $\H^r\DR(B\SO_r, \Cx)((\hbar)) $, since $\H^r\DR(B\OO_r, \det_{\Cx})((\hbar)) $ is a direct summand as in the proof of Lemma \ref{eulerlemma}.

On pulling back to any scheme $Z$, the class $\bar{e}_r(\sE):= [\sE]^*\bar{e}_r$ can be expressed  in terms of the curvature $\kappa$ of a connection as  $\hbar^{-r}[Q(\kappa)^{r/2}]/(r/2)!$, by Proposition \ref{kappaprop}. The same reasoning holds for smooth connections, giving an expression for $\bar{e}_r(\sE)$ valid for all complex orthogonal vector bundles, including the universal bundles on $B\OO_r$ and $B\SO_r$. 
Since curvature is additive  and exponentials send sums to products, the Whitney sum $\oplus \co (\SO_2)^{r/2} \to \SO_r$  then gives
\[
 {\oplus}^* \bar{e}_r= \bar{e}_2^{\ten (r/2)} \in \H^*(B\SO_2(\Cx), \Cx)^{\ten (r/2)}((\hbar)).
\]

Now,  with respect to the universal real orthogonal $\C^{\infty}$ bundle  $\sU$ on $B\SO_r(\R)$, \cite[Theorem 1.5]{brownCohoBSOnBOn} gives cohomology as  $\H^*(B\SO_{r}(\R),\Q) \cong \Q[p_1, \ldots, p_{(r/2)-1}, e_{r}]$ for $p_i=(-1)^ic_{2i}(\sU \ten \Cx) \in \H^{4i}(B\SO_{r}(\R),\Q)$ (Chern classes) and $e_{r} \in \H^{r}(B\SO_{2s}(\R),\Q)$ (the Euler class), with $(e_{r})^2=p_{r/2}$.
From the fundamental theorem of symmetric polynomials, we can deduce that the Whitney sum gives an injective map 
\[
 \oplus^* \co  \H^*(B\SO_{r}(\R),\Q) \to \H^*((B\SO_2)^{r/2}, \Q)^{S_{r/2}},
\]
with $e_r$ mapping to $e_2^{\ten (r/2)}$ and $p_i$ mapping to the $i$th elementary symmetric function in the variables $1^{\ten j-1}\ten p_1 \ten 1^{(r/2)-j} = 1^{\ten j-1}\ten (e_2)^2 \ten 1^{(r/2)-j}$. %%to see independence, note that the $p_i$ can't even recover $(e_r)^2= p_{r/2}$, the $(r/2)$th symmetric function in the squres. 

Meanwhile, the class $\bar{e}_2$ is easy to describe. We have an isomorphism  $\SO_{2,\Cx} \cong \bG_{m, \Cx}$ given by $\left( \begin{smallmatrix}  \alpha  & \beta \\ -\beta & \alpha \end{smallmatrix}\right) \mapsto \alpha + i \beta$. Thus any special orthogonal algebraic  bundle $\sE$  of rank $2$ takes the form $\sE=\sL \oplus \sL^*$. %, with determinant via $\sL \ten \sL^* \cong \sO_{Z}$. 
In this case, %we have curvature  $\kappa(\sL) \in \Omega^2_{Z} \ten \sEnd(\sL)$, with  
the curvature is given by $\kappa(\sL \oplus \sL^*)= \kappa(\sL)\oplus \kappa(\sL^*)$. Under the isomorphism $u \co \det \sE \cong \sO_Z$, we  then have
$uQ(\kappa(\sL \oplus \sL^* ))= \kappa(\sL) = c_1(\sL)$, the first Chern class, so $\bar{e}_2(\sE)= \hbar^{-1}c_1(\sL)$.

Since $S^1 \subset \bG_m(\Cx)$ is a deformation retract, the sheaf $\C^{\infty}(\sL)$ of smooth sections of $\sL$ admits a unitary inner product, so 
\[
\C^{\infty}(\sE) \cong \C^{\infty}(\sL) \oplus \overline{\C^{\infty}(\sL)}= \C^{\infty}(\sL)\ten_{\R}\Cx.
\]
Thus  $\C^{\infty}(\sL)$ is a real form for the complex bundle $\C^{\infty}(\sE) $, and the associated class  $e_2(\sE)$ is the (real) Euler class $e( \C^{\infty}(\sL))$. However, since $\sL$ itself has a complex structure, this Euler class  is just the Chern class $c_1(\sL)$. 

We have therefore shown that $ \bar{e}_2(\sE)= \hbar^{-1}e_2(\sE)$ for all rank $2$ orthogonal bundles $\sE$, and hence that $\bar{e}_2= \hbar^{-1}e_2 $.
Combined with the  Whitney sum formula above, this shows   that $\bar{e}_r=\hbar^{-r/2}e_r$. Since $\tilde{e}_r$ is determined by $\bar{e}_r$, we must also have  $\tilde{e}_r=\hbar^{-r/2}e_r$, as required.
\end{proof}

\subsubsection{Virtual fundamental classes}

Substituting Lemmas \ref{EulerLemmaNew1} and  \ref{EulerLemmaNew2} into Corollary \ref{VFCCor} now gives:
\begin{proposition}\label{EulerPropNew}
Let $X$ be a dg $\Cx$-manifold equipped with a strictly non-degenerate strict $(-2)$-shifted Poisson structure $\pi$, determining an orthogonal  vector bundle $\sE:=\sO_{X,1}$ on $X^0$ and a global section $\phi \in \Gamma(X^0,\sE)$ as in Lemma \ref{StrictPoissonLemmaNew}. Then for quantisations $S$ of $\pi$, 
 the  virtual fundamental classes $[e^S]$
 from Corollary \ref{VFCCor} and Remark \ref{VFClocsysrmk} lie in 
 \[
  \H^{BM}_{\dim X}(\pi^0X(\Cx), o_X\brh),
 \]
Borel--Moore homology
 with coefficients in the local system  
 \[
 o_X:= \{m \in \det \sE^{\an}~:~ Q(m,m) \in \sO_{X^0}^{\an} \text{ locally constant}\}.
 \]
 The virtual fundamental classes $[e^S]$ are given by
\[
 \begin{cases}
  \hbar^{(\dim X)/2}((\phi,\sE)^*e_r)\frown[X^0] (1+\hbar^2\Cx\brh) & 2 \mid \dim X \\
  0 & 2 \nmid \dim X,
 \end{cases}
\]
 for  the Euler classes $e_r \in \H^r([Q_r/\OO_r],[Q_r^*/\OO_r] ;\det_{\Cx}) \cong \H^r(B\OO_r,B\OO_{r-2} ;\det_{\Cx})$  of Corollary \ref{stiefelcor}.
\end{proposition}

\begin{remark}\label{sVrmk}
There is  a more topological characterisation of the map $(\phi,\sE)^*%e_r \in  \H^{BM}_{\dim X}(\pi^0X, o_X) \cong 
\co \H^*(B\OO_r,B\OO_{r-2}; \det) \to
\H^*(X^0(\Cx), (X^0 \setminus \pi^0X)(\Cx); o_X)$. First,  choose a real orthogonal $\C^{\infty}$-bundle $\sV$ on the analytic site of $X^0(\Cx)$ with an isomorphism between $\sV\ten_{\R}\Cx$ and the smooth sections of $\sE$; this is possible because $\OO_r(\R) \to \OO_r(\Cx)$ is a deformation retract. Then the real and imaginary parts of $\phi$ give global sections $\mathrm{Re}\, \phi, \,\mathrm{Im}\, \phi$ of $\sV$, and the   equation  $Q(\phi,\phi)=0$ amounts to saying that $\mathrm{Re}\, \phi, \,\mathrm{Im}\, \phi $  are orthogonal and have the same norm. In particular, on $X^0\setminus \pi^0X$ they span an orthogonal subbundle of rank $2$, and we can let $\sW$ be its orthogonal complement, of rank $r-2$.

This gives a commutative diagram 
\[
 \begin{CD}
  X^0 \setminus \pi^0X @>{[\sW]}>> B\OO_{r-2}(\R)\\
@VVV  @VV{ [M] \mapsto [M \oplus \R^2]}V \\
X^0   @>{[\sV]}>>   B\OO_r(\R).
\end{CD}
\]
of topological stacks, and  $(\phi,\sE)^*$ is then just the induced  map on the relative cohomologies.
\end{remark}

\begin{remark}
From the description of cohomology in Corollary \ref{stiefelcor}, note that in the setting of Proposition \ref{EulerPropNew} we must also have a class $(\phi,\sE)^*e_{r
-1} \in \H^{BM}_{\dim X+1}(\pi^0X(\Cx), o_X)$; this does not seem to have an obvious interpretation in terms of quantisations, or indeed an analogue in less strict settings. 
\end{remark}

\begin{corollary}\label{BJcor}
In the setting of Proposition \ref{EulerPropNew}, when $\pi^0X$ is proper and $\sE$  a special orthogonal bundle,  the images in Steenrod homology of the  classes  $[\exp(S)] \in  \H_{\dim X}^{BM}(\pi^0X(\Cx),\Cx)\brh$ associated to   quantisations $S$  of $\pi_{\hbar}$ via  Corollary \ref{VFCCor} are given by 
\[
\hbar^{(\dim X)/2} [X]_{BJ} \cdot (1+\hbar^2 \Cx\brh),
\]
where $[X]_{BJ}$ is the Borisov--Joyce virtual fundamental class $[X_{\mathrm{dm}}]_{\mathrm{virt}}$ of \cite[Corollary 3.19]{BorisovJoyce}.
\end{corollary}
\begin{proof}
Choose a real $\C^{\infty}$ vector bundle $\sV$ as in  Remark \ref{sVrmk}, noting that it must be a special orthogonal bundle since $\det \sE \cong \sO_{X^0}$ by hypothesis.  Observe that the dual vector bundles to $\sV$ and $\sE$ satisfy the conditions of \cite[Definition 3.6]{BorisovJoyce} (in terms of their notation, $\sE$ is given by algebraic sections of $E^*$, and $\sV$ is given by smooth sections of $(E^+)^*$); this relies on the observation that $\mathrm{Re}\, Q$ is positive definite on both $\sV$ and its $\mathrm{Re}\, Q$-orthogonal complement $i \sV$. %%also that $\mathrm{Re}\, Q$ on $\sV$ extends $\Cx$ linelary to give $Q$ on $\sE$, and that $Q(ds,ds)$. 
Thus $X_{\mathrm{dm}} := (X^0(\Cx), \sV, \phi) $ determines a Kuranishi neighbourhood of the form in \cite[3.16]{BorisovJoyce}, and $[X]_{BJ}$ is the associated class in Steenrod homology, or equivalently in ordinary homology as $\pi^0X(\Cx)$ is a Euclidean neighbourhood retract (cf.  \cite[Corollary 3.19]{BorisovJoyce}).

%%%pread to here

In the equivalence \cite[Theorem 4.42]{joyceDmanIntro} %\cite[Theorem 13.11]{joycedm} 
between bordism and derived bordism classes, $X_{\mathrm{dm}}$ corresponds to the class  of the vanishing locus $Z$ of a generic section of $\sV$, and then $[X]_{BJ}=[Z]$ in Steenrod homology $\H_{\dim X}^{\mathrm{St}}(\pi^0X(\Cx),\Z)= \Lim_i \H_{\dim X}^{\mathrm{St}}(U_i,\Z)$, for a system of open neighbourhoods $U_i$ with  $\pi^0X(\Cx) = \bigcap_i U_i$. Now, by \cite[Proposition 12.8]{BottTu}, the class $[Z]$ is Poincar\'e dual to the Euler class, so  $[Z\cap U_i]= e(\sV) \frown [U_i] \in \H_{\dim X}^{BM}(U_i, \Z)$. The result now follows from Proposition \ref{EulerPropNew} by taking limits.
\end{proof}

\begin{remark}\label{BJrmk}
The construction of Remark \ref{sVrmk} can be adapted to upgrade the  classes in Steenrod homology from \cite[\S 2.6.5]{BorisovJoyce} to classes in Borel--Moore homology, independently of our quantisation results in \S\S \ref{affinesn},\ref{globalsn}. An oriented derived manifold $Y$ with cotangent complex generated in chain degrees $[0,1]$ can be given as the derived vanishing locus of a section $v$ of a   vector bundle $\sV$ on a manifold $Y^0$, with the orientation being an isomorphism $\det \sV \cong \det \Omega^1_{Y^0}$. %%thanks to Whitney embedding etc. That certainly gives $Y \into Y^0$ provided same holds for $\pi^0Y$, so affine. then $L_{Y/Y^0}|_{\pi^0Y}$ a vector bundle, then lift stuff and kill. Note that once we have affine, thigs are fine, since just choose submod of $A_1$ and look at dga it generates. 
In the setting of Corollary \ref{BJcor}, the section $v$ corresponds to $\mathrm{Re}\, \phi$.  

If $\sV$ has rank $r$, we then have a map $(\sV, \<v\>^{\perp}) \co (Y^0, Y^0 \setminus \pi^0Y) \to (B\GL_r(\R), B\GL_{r-1}(\R))$. There is a  Thom class $e_r \in \H^r(B\GL_r, B\GL_{r-1};\sigma)$, where $\sigma$ is the rank $1$ representation given by the sign of the determinant, and this  pulls back to give a class %%note that we've replaced $\OO$ with $\GL$
\[
 (\sV, \<v\>^{\perp})^*e_r \in \H^r(Y^0, Y^0 \setminus \pi^0Y; o_{Y^0}) \cong \H^{BM}_{\dim Y^0 - r}(\pi^0Y)  = \H^{BM}_{\dim Y}(\pi^0Y),
\]
where $o_{Y^0}$ denotes the orientation sheaf. 
If we drop the orientation condition on $Y$, then the class instead lies in $\H^{BM}_{\dim Y}(\pi^0Y,o_Y) $, where $o_Y$ is the tensor product of the orientation sheaves of $Y^0$ and of $\sV$.
%%this is $0$ when $r$ odd, because $\H^*(B\OO_r,\det)=0$.
%%to form oreintation local system of a v.bundle, first form $\det \sV$, then look at sheaf of non-vanishing sections, or equivalently of generators or trivialisatins as a $\C^{\infty}(Y,\R)$-module. then apply $\by^{\C^{\infty}(Y, \R^*)}\Z$, with $\C^{\infty}(Y, \R^*) $ acting by sign. Effectively a $\mu_2$-torsor, so no point in dualising.

We expect that Proposition \ref{EulerPropNew} then generalises to arbitrary $(-2)$-shifted symplectic derived Deligne--Mumford stacks, %using the gluing techniques of \cite{BorisovSheshmaniYau}, 
so that the classes $[e^S]$ arising from quantisations via Remarks \ref{DRrtoBMRmk} and \ref{QIMconnRmk} should be given in general by  $\hbar^{(\dim Y)/2}(\sV, \<v\>^{\perp})^*e_r(1+\hbar^2\Cx\brh)$ for $r$ even, and by $0$ for $r$ odd. 
\end{remark}

\bibliographystyle{alphanum}
\bibliography{references.bib}

\newcommand{\etalchar}[1]{$^{#1}$}
\def\cprime{$'$}
\begin{thebibliography}{BBBBJ}

\bibitem[BBBBJ]{BBBJdarboux}
O.~Ben-Bassat, C.~Brav, V.~Bussi, and D.~Joyce.
\newblock A ``{D}arboux theorem'' for shifted symplectic structures on derived
  {A}rtin stacks, with applications.
\newblock {\em Geom. Topol.}, 19:1287--1359, 2015.
\newblock arXiv:1312.0090 [math.AG].

\bibitem[BBD{\etalchar{+}}]{BBDJS}
C.~Brav, V.~Bussi, D.~Dupont, D.~Joyce, and B.~Szendr{\"o}i.
\newblock Symmetries and stabilization for sheaves of vanishing cycles.
\newblock {\em J. Singul.}, 11:85--151, 2015.
\newblock arXiv:1211.3259 [math.AG]. With an appendix by J{\"o}rg
  Sch{\"u}rmann.

\bibitem[BG]{BouazizGrojnowski}
E.~Bouaziz and I.~Grojnowski.
\newblock A {$d$}-shifted {D}arboux theorem.
\newblock arXiv:1309.2197v1 [math.AG], 2013.

\bibitem[BJ]{BorisovJoyce}
D.~{Borisov} and D.~{Joyce}.
\newblock {Virtual fundamental classes for moduli spaces of sheaves on
  Calabi-Yau four-folds}.
\newblock {\em Geom. Topol.}, to appear. 2015.
\newblock arXiv: 1504.00690 [math.AG].

\bibitem[BL]{BraunLazarevHtpyBV}
C.~Braun and A.~Lazarev.
\newblock Homotopy {BV} algebras in {P}oisson geometry.
\newblock {\em Trans. Moscow Math. Soc.}, pages 217--227, 2013.

\bibitem[Bro]{brownCohoBSOnBOn}
Edgar~H. Brown, Jr.
\newblock The cohomology of {$B{\rm SO}_{n}$} and {$B{\rm O}_{n}$} with integer
  coefficients.
\newblock {\em Proc. Amer. Math. Soc.}, 85(2):283--288, 1982.

\bibitem[BT]{BottTu}
Raoul Bott and Loring~W. Tu.
\newblock {\em Differential forms in algebraic topology}, volume~82 of {\em
  Graduate Texts in Mathematics}.
\newblock Springer-Verlag, New York-Berlin, 1982.

\bibitem[CFK]{Quot}
Ionu{\c{t}} Ciocan-Fontanine and Mikhail Kapranov.
\newblock Derived {Q}uot schemes.
\newblock {\em Ann. Sci. {\'E}cole Norm. Sup. (4)}, 34(3):403--440, 2001.

\bibitem[CPT{\etalchar{+}}]{CPTVV}
D.~Calaque, T.~Pantev, B.~To{\"e}n, M.~Vaqui{\'e}, and G.~Vezzosi.
\newblock Shifted {P}oisson structures and deformation quantization.
\newblock {\em J. Topol.}, 10(2):483--584, 2017.
\newblock arXiv:1506.03699v4 [math.AG].

\bibitem[FT]{FeiginTsygan}
B.L. Feigin and B.L. Tsygan.
\newblock Additive {K}-theory and crystalline cohomology.
\newblock {\em Functional Analysis and Its Applications}, 19(2):124--132, 1985.

\bibitem[Gai]{GaitsgoryIndCoh}
Dennis Gaitsgory.
\newblock ind-coherent sheaves.
\newblock {\em Mosc. Math. J.}, 13(3):399--528, 553, 2013.
\newblock arXiv: arXiv:1105.4857 [math.AG].

\bibitem[GR]{GaitsgoryRozenblyumCrystal}
Dennis Gaitsgory and Nick Rozenblyum.
\newblock Notes on geometric {L}anglands: crystals and {$D$}-modules.
\newblock arXiv:1111.2087, 2011.

\bibitem[Har]{HartshorneAlgDeRham}
Robin Hartshorne.
\newblock Algebraic de {R}ham cohomology.
\newblock {\em Manuscripta Math.}, 7:125--140, 1972.

\bibitem[Hin]{hinstack}
Vladimir Hinich.
\newblock D{G} coalgebras as formal stacks.
\newblock {\em J. Pure Appl. Algebra}, 162(2-3):209--250, 2001.

\bibitem[Joy]{joyceDmanIntro}
Dominic Joyce.
\newblock An introduction to d-manifolds and derived differential geometry.
\newblock In {\em Moduli spaces}, volume 411 of {\em London Math. Soc. Lecture
  Note Ser.}, pages 230--281. Cambridge Univ. Press, Cambridge, 2014.

\bibitem[Kra]{kravchenko}
Olga Kravchenko.
\newblock Deformations of {B}atalin--{V}ilkovisky algebras.
\newblock In {\em Poisson geometry (Warsaw, 1998)}, volume~51 of {\em Banach
  Center Publ.}, pages 131--139, Warsaw, 2000. Polish Acad. Sci.

\bibitem[Pri1]{DQpoisson}
J.~P. Pridham.
\newblock Quantisation of derived {P}oisson structures.
\newblock arXiv: 1708.00496v2 [math.AG], 2017.

\bibitem[Pri2]{poisson}
J.~P. Pridham.
\newblock Shifted {P}oisson and symplectic structures on derived {$N$}-stacks.
\newblock {\em J. Topol.}, 10(1):178--210, 2017.
\newblock arXiv:1504.01940v5 [math.AG].

\bibitem[Pri3]{DQnonneg}
J.~P. Pridham.
\newblock Deformation quantisation for unshifted symplectic structures on
  derived {A}rtin stacks.
\newblock {\em Selecta Math. (N.S.)}, 24(4):3027--3059, 2018.
\newblock arXiv: 1604.04458v4 [math.AG].

\bibitem[Pri4]{DQvanish}
J.~P. Pridham.
\newblock Deformation quantisation for {$(-1)$}-shifted symplectic structures
  and vanishing cycles.
\newblock {\em Algebr. Geom.}, 6(6):747--779, 2019.
\newblock arXiv:1508.07936v5 [math.AG].

\bibitem[PTVV]{PTVV}
T.~Pantev, B.~To{\"e}n, M.~Vaqui{\'e}, and G.~Vezzosi.
\newblock Shifted symplectic structures.
\newblock {\em Publ. Math. Inst. Hautes \'Etudes Sci.}, 117:271--328, 2013.
\newblock arXiv: 1111.3209v4 [math.AG].

\bibitem[Sai]{saitoDmodsAnalyticSpaces}
Morihiko Saito.
\newblock {$\mathscr{D}$}-modules on analytic spaces.
\newblock {\em Publ. Res. Inst. Math. Sci.}, 27(2):291--332, 1991.

\bibitem[Sim]{simpsonHtpy}
Carlos Simpson.
\newblock Homotopy over the complex numbers and generalized de {R}ham
  cohomology.
\newblock In {\em Moduli of vector bundles ({S}anda, 1994; {K}yoto, 1994)},
  volume 179 of {\em Lecture Notes in Pure and Appl. Math.}, pages 229--263.
  Dekker, New York, 1996.

\bibitem[Vit]{vitagliano}
Luca Vitagliano.
\newblock Representations of homotopy {L}ie-{R}inehart algebras.
\newblock {\em Math. Proc. Cambridge Philos. Soc.}, 158(1):155--191, 2015.

\end{thebibliography}

\end{document}